\newtheorem{theorem}{Theorem}[section]
\newtheorem{definition}[theorem]{Definition}
\newcommand{\T}{\mathcal{T}}
\newcommand{\sgn}{\text{sgn}}
\newcommand{\cA}{\mathcal{A}}
\newcommand{\cD}{\mathcal{D}}
\newcommand{\WD}{\mathcal{WD}}
\newcommand{\WS}{\mathcal{WS}}
\newcommand{\defeq}{\vcentcolon=}
\newcommand{\E}{\mathrm{E}}
\newcommand{\Var}{\mathrm{Var}}
\def\CatchFBT@Fin@l#1[#2]{%
   \begingroup
      \makeatletter #2%
      \scantokens\expandafter{%
         \expandafter\CatchFBT@tok\expandafter{\the\CatchFBT@tok}}%
      \CatchFBT@IsAToken{#1}
         {\global#1\expandafter{\the\CatchFBT@tok}}
         {\xdef#1{\the\CatchFBT@tok}}%
      \ifx\CatchFBT@tok#1\else\global\CatchFBT@tok{}\fi
   \endgroup
}
\begin{document}

\title{A Fast Algorithm for Multiresolution Mode Decomposition}
\author{Gao Tang$^\dagger$, Haizhao Yang$^*$\\
  \vspace{0.1in}\\
  $^\dagger$ Department of Mechanical Engineering and Materials Science, Duke University, US\\
  $^*$Department of Mathematics, National University of Singapore, Singapore\\
}

\date{November 2017}
\maketitle

\begin{abstract}
\emph{Multiresolution mode decomposition} (MMD) is an adaptive tool to analyze a time series $f(t)=\sum_{k=1}^K f_k(t)$, where $f_k(t)$ is a \emph{multiresolution intrinsic mode function} (MIMF) of the form 
\begin{eqnarray*}
f_k(t)&=&\sum_{n=-N/2}^{N/2-1} a_{n,k}\cos(2\pi n\phi_k(t))s_{cn,k}(2\pi N_k\phi_k(t))\\&&+\sum_{n=-N/2}^{N/2-1}b_{n,k} \sin(2\pi n\phi_k(t))s_{sn,k}(2\pi N_k\phi_k(t))
\end{eqnarray*}
with time-dependent amplitudes, frequencies, and waveforms. The multiresolution expansion coefficients $\{a_{n,k}\}$, $\{b_{n,k}\}$, and the shape function series $\{s_{cn,k}(t)\}$ and $\{s_{sn,k}(t)\}$ provide innovative features for adaptive time series analysis. The MMD aims at identifying these MIMF's (including their multiresolution expansion coefficients and shape functions series) from their superposition. However, due to the lack of efficient algorithms to solve the MMD problem, the application of MMD for large-scale data science is prohibitive, especially for real-time data analysis. This paper proposes a fast algorithm for solving the MMD problem based on recursive diffeomorphism-based spectral analysis (RDSA). RDSA admits highly efficient numerical implementation via the nonuniform fast Fourier transform (NUFFT); its convergence and accuracy can be guaranteed theoretically. Numerical examples from synthetic data and natural phenomena are given to demonstrate the efficiency of the proposed method.

\end{abstract}

{\bf Keywords.} Mode decomposition, time series, wave shape functions, multiresolution analysis, non-uniform FFT, non-parametric regression.

{\bf AMS subject classifications: 42A99 and 65T99.}

\section{Introduction}
\label{sec:intro}

Oscillatory data analysis is important for a considerate number of real world applications such as medical electrocardiography (ECG) reading \cite{HauBio2,Pinheiro2012175,7042968}, atomic crystal images in physics \cite{Crystal,LuWirthYang:2016}, mechanical engineering \cite{Eng2,ME}, art investigation \cite{Canvas,Canvas2}, geology \cite{GeoReview,SSCT,977903}, imaging \cite{4685952}, etc. One single record of the data might contain several principal components with different oscillation patterns. The goal is to extract these components and analyze them individually. A typical model in \emph{mode decomposition} is to assume that a signal $f(t)$ defined on $[0,1]$ consists of several oscillatory modes like
\begin{equation}
\label{P1}
f(t)=\sum_{k=1}^K \alpha_k(t) e^{2\pi i N_k \phi_k(t)}+r(t),
\end{equation}
where $\alpha_k(t)$ is a smooth, positive, and non-oscillatory instantaneous amplitude, $N_k \phi_k(t)$ is a smooth and strictly increasing instantaneous phase, $N_k\phi_k'(t)$ is the instantaneous frequency, and $r(t)$ is the residual signal. Methods for the mode decomposition problem in \eqref{P1} include the empirical mode decomposition approach \cite{Huang1998,doi:10.1142/S1793536909000047}, synchrosqueezed transforms \cite{Daubechies2011,BEHERA2016}, time-frequency reassignment methods \cite{Auger1995,Chassande-Mottin2003}, adaptive optimization \cite{VMD,Hou2012}, iterative filters \cite{YangWang,Cicone2016384}, etc.

In complicated applications, sinusoidal oscillatory patterns may lose important physical information \cite{PhysicalAnal,Hau-Tieng2013,1DSSWPT,Hou2016,HZYregression}, which motivates the introduction of shape functions $\{s_k(t)\}_{1\leq k\leq K}$ and the \emph{generalized mode decomposition} as follows
\begin{equation}
\label{P2}
f(t)= \sum_{k=1}^K \alpha_k(t) s_k(2 \pi N_k \phi_k(t))+r(t),
\end{equation}
where $\{s_k(t)\}_{1\leq k\leq K}$ are $2\pi$-periodic and zero-mean shape functions with a unit norm in $L^2([0,2\pi])$, $\alpha_k(t)$ and $\phi_k(t)$ are the same functions as in \eqref{P1}. One of such examples is the photoplethysmogram (PPG) signal (see Figure \ref{fig:ppg0}) in medical study. Shape functions reflect complicated evolution patterns of the signal $f(t)$ and contain valuable information for monitoring the health condition of patients \cite{doi:10.1097/ALN.0b013e31816c89e1,LI201589,1615746,7529221,cite-key}.

\begin{figure}
  \vspace{-0.5cm}
  \begin{center}
  \begin{tabular}{ccc}
 &  \includegraphics[height=1.2in]{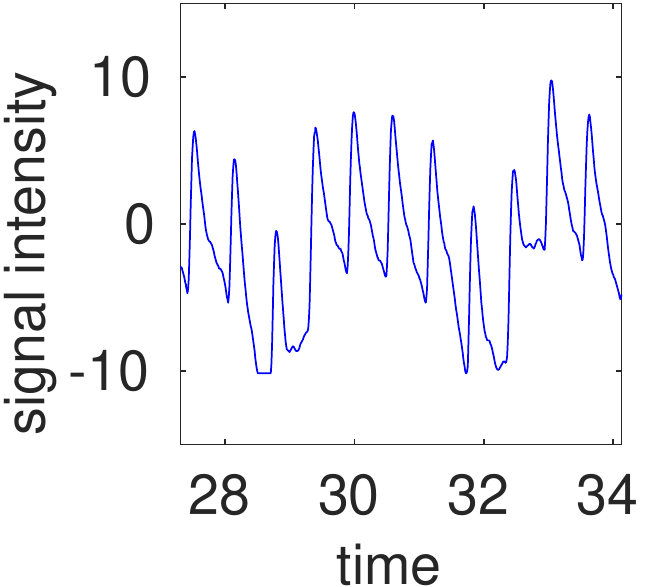}&\\
         \includegraphics[height=1.2in]{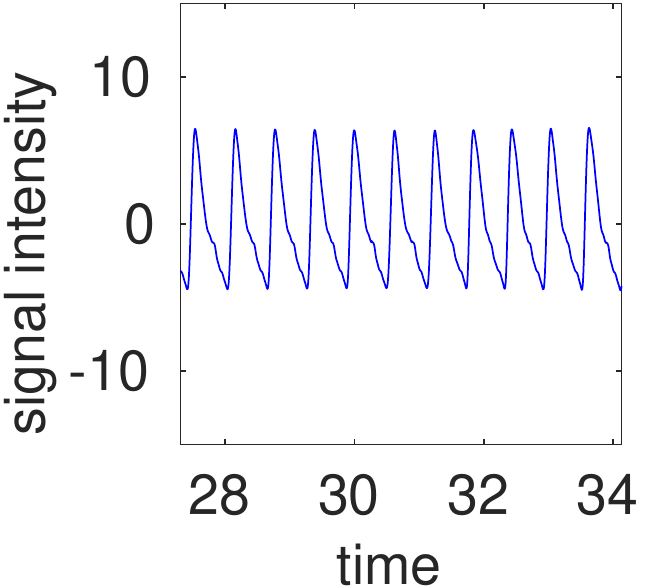}  &
      \includegraphics[height=1.2in]{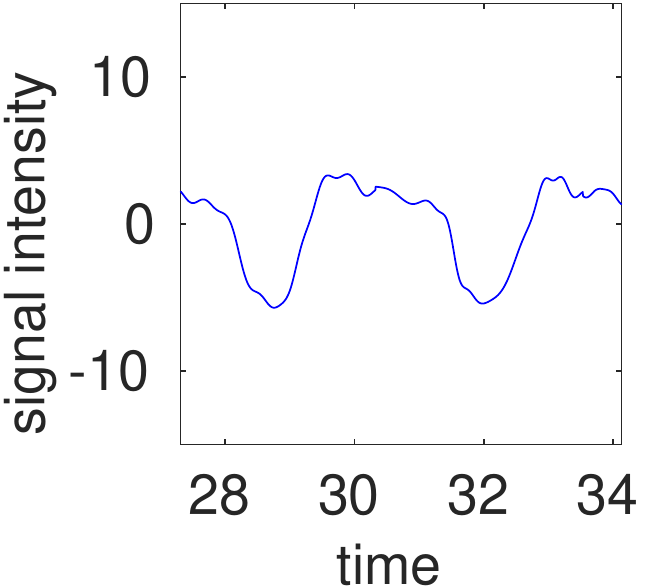}&
      \includegraphics[height=1.2in]{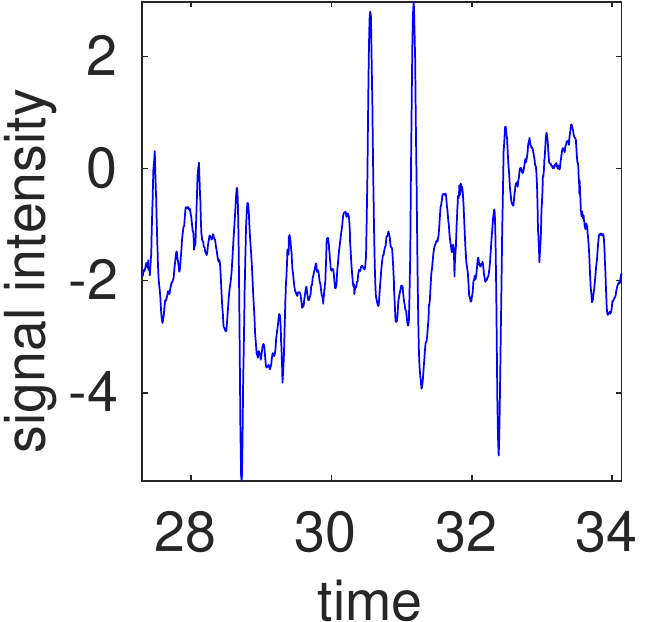} \\
      \includegraphics[height=1.2in]{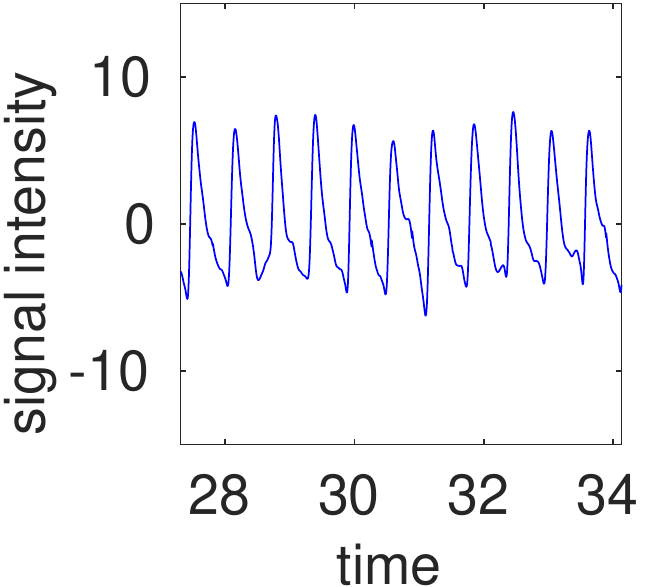}  &
      \includegraphics[height=1.2in]{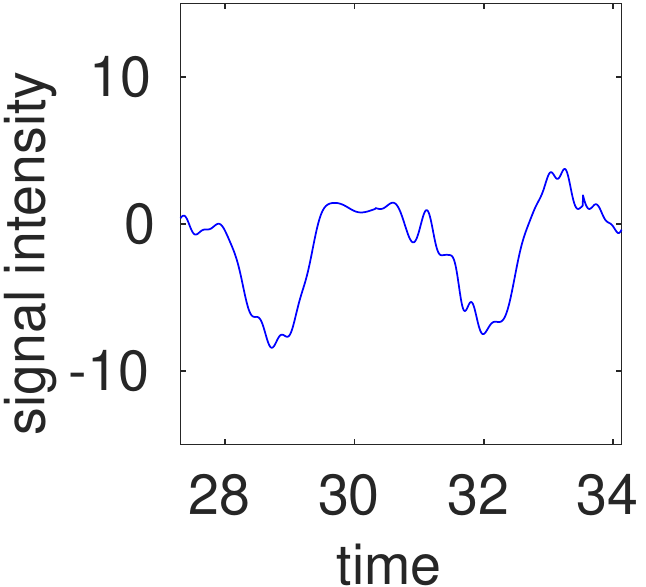}&
      \includegraphics[height=1.2in]{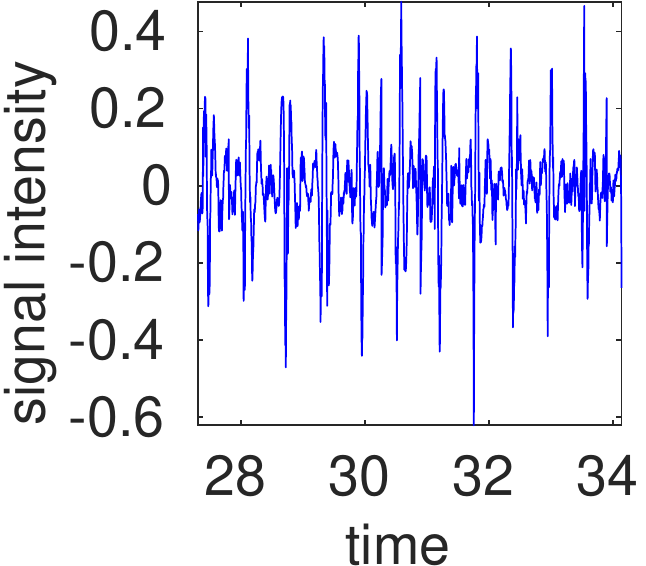}  \\
      \end{tabular}
  \end{center}
  \vspace{-0.5cm}
    \caption{Decomposition of a {\bf real} PPG signal (top). Middle panel is the decomposition by the generalized mode decomposition in \eqref{P2} and the bottom panel is by the MMD \eqref{eqn:m_mmd}. The decomposed results contain one cardiac mode (left), one respiratory mode (middle), and the residual signal (right). The residual signal by MMD is significantly weaker and behaves more like white noise than that by Model \eqref{P2}.  Figure \ref{fig:ppg01} below visualizes the whiteness of the residual signals.}
  \label{fig:ppg0}
  \vspace{-0.5cm}
\end{figure}

\begin{figure}
  \vspace{-0.5cm}
  \begin{center}
\begin{tabular}{ccc}
   \includegraphics[width=1.2in]{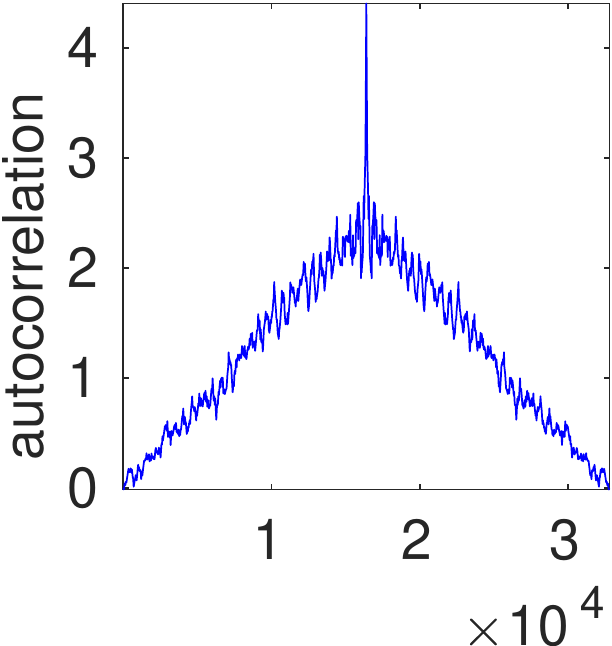} & \includegraphics[width=1.25in]{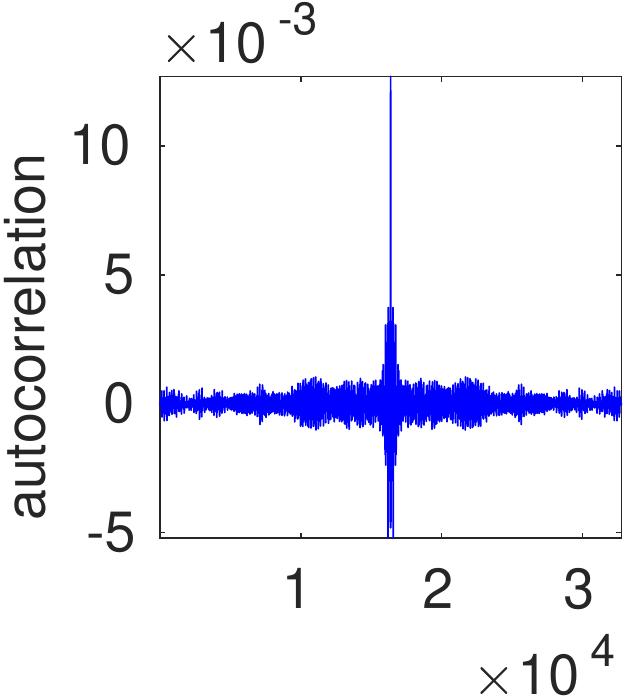}&  \includegraphics[width=1.3in]{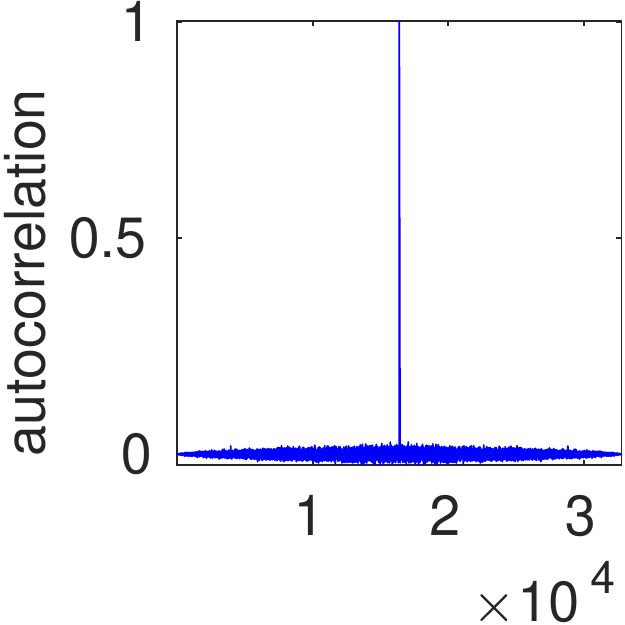} 
   \end{tabular}
  \end{center}
  \vspace{-0.5cm}
    \caption{Comparison of the whiteness of the residual signal generated by \eqref{P2} and MMD \eqref{eqn:m_mmd} for the PPG signal in Figure \ref{fig:ppg0}. The autocorrelation of the residual signal by \eqref{P2}, the residual signal by MMD, and a vector of Gaussian random noise is plotted in the left, middle, and right figures, respectively. Theoretically, the autocorrelation of white noise is an impulse at lag 0. Hence, the results here show that the residual signal by MMD is close to white noise, while that by \eqref{P2} still contains correlated oscillation.}
  \label{fig:ppg01}
\end{figure}

To better analyze time series with time-dependent amplitudes, phases, and shapes, the \emph{multiresolution mode decomposition} (MMD) is proposed in \cite{MMD} of the form
\begin{equation}
\label{eqn:m_mmd}
f(t) = \sum_{k=1}^{K} f_k(t),
\end{equation}
where each
\begin{eqnarray}\label{eqn:m_IMF}
f_k(t)&=&\sum_{n=-N/2}^{N/2-1} a_{n,k}\cos(2\pi n\phi_k(t))s_{cn,k}(2\pi N_k\phi_k(t))\nonumber\\
&&+\sum_{n=-N/2}^{N/2-1}b_{n,k} \sin(2\pi n\phi_k(t))s_{sn,k}(2\pi N_k\phi_k(t))
\end{eqnarray}
is a \emph{multiresolution intrinsic mode function} (MIMF) with shape functions and phase functions satisfying the same conditions as in \eqref{P2}. 
MIMF is a generalization of the model $\alpha_k(t)s_k(2\pi N_k\phi_k(t))$ in Equation \eqref{P2} for more accurate data analysis (see the comparison of the model \eqref{P2} and \eqref{eqn:m_mmd} in Figure \ref{fig:ppg0} for the improvement). When $s_{cn,k}(t)$ and $s_{sn,k}(t)$ in Equation \eqref{eqn:m_IMF} are equal to the same shape function $s_k(t)$, the model in Equation \eqref{eqn:m_IMF} is reduced to $\alpha_k(t)s_k(2\pi N_k\phi_k(t))$ once the amplitude function $\alpha_k(t)$ is written in the form of its Fourier series expansion. When $s_{cn,k}(t)$ and $s_{sn,k}(t)$ are different shape functions, the two summations in Equation \eqref{eqn:m_IMF} lead to time-dependent shape functions to describe the nonlinear and non-stationary time series adaption. 

A recent paper \cite{ceptrum} also tried to address the limitation of Model \eqref{P2} by replacing $\widehat{s_k}(n)\alpha_k(t)$ with a time-varying function, denoted as $B_{k,n}(t)$, i.e., introducing more variance to amplitude functions. 
Our model in \eqref{eqn:m_IMF} emphasizes both the time variance of amplitude and shape functions by introducing multiresolution expansion coefficients and shape function series. 

It was shown in \cite{MMD} that the MIMF model can capture the evolution variance, which is more important than the average evolution patterns of oscillatory data, for detecting diseases and measuring health risk. Let $\mathcal{M}_\ell$ be the operator for computing the \emph{$\ell$-banded multiresolution approximation} to a MIMF $f(t)$, i.e.,
\begin{eqnarray}
\label{eqn:mm_IMF}
\mathcal{M}_\ell(f)(t) = \sum_{n=-\ell}^{\ell} a_n\cos(2\pi n\phi(t))s_{cn}(2\pi N\phi(t))+\sum_{n=-\ell}^{\ell}b_n \sin(2\pi n\phi(t))s_{sn}(2\pi N\phi(t)),
\end{eqnarray}  
and $\mathcal{R}_\ell$ be the operator for the computing of the residual sum
\begin{equation}
\label{eqn:rr_IMF}
\mathcal{R}_\ell(f)(t) = f(t)-\mathcal{M}_\ell(f)(t).
\end{equation}
 Then the $0$-banded multiresolution approximation $\mathcal{M}_0(f)(t)=a_0 s_{c0}(2\pi N\phi(t))$ describes the average evolution pattern of the signal, while the rest describe the evolution variance. Figure \ref{fig:ECG1} shows that, if $f(t)$ is an ECG signal\footnote{From the PhysiNet \url{https://physionet.org/}. }, $\mathcal{R}_0(f)(t)$ visualizes the change of the evolution pattern better than $f(t)$, e.g., the change of the height of R peaks, the width of QRS and S waves.

\begin{figure}
  \begin{center}
   \includegraphics[width=5in]{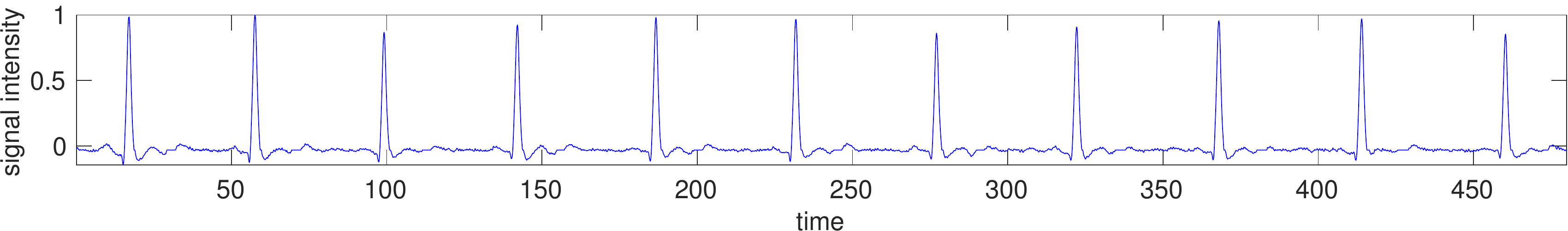} \\
      \includegraphics[width=5in]{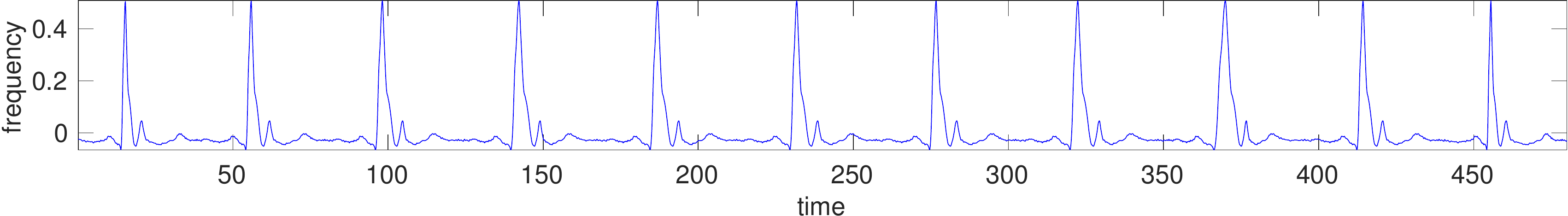}\\
      \includegraphics[width=5in]{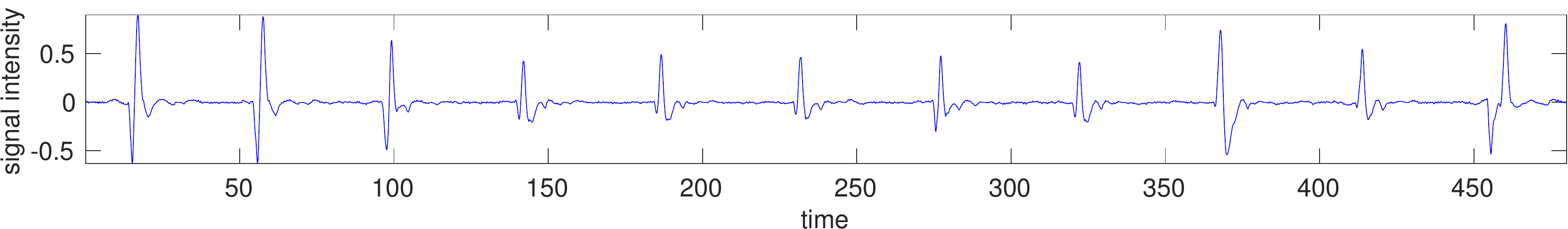}
  \end{center}
  \vspace{-0.5cm}
    \caption{Top: a motion artifact contaminated ECG signal $f(t)$ modeled by Equation \eqref{eqn:m_IMF}. Middle: the $0$-band multiresolution approximation $\mathcal{M}_0(f)(t)=a_0 s_{c0}(2\pi N\phi(t))$ of $f(t)$. Bottom: $f(t) -\mathcal{M}_0(f)(t)$, the variance of the evolution pattern of $f(t)$.}
  \label{fig:ECG1}
\end{figure}

The MMD problem aims at extracting each MIMF $f_k(t)$, estimating its corresponding multiresolution expansion coefficients $\{a_{n,k}\}$, $\{b_{n,k}\}$, and the shape function series $\{s_{cn,k}(t)\}$ and $\{s_{sn,k}(t)\}$, {\bf assuming} that the phase functions $\{\phi_k\}$ are known. Estimating phase functions have been an active research field in mode decomposition and method well-established approaches \cite{Huang1998,doi:10.1142/S1793536909000047,Daubechies2011,BEHERA2016,Auger1995,Chassande-Mottin2003,VMD,Hou2012,YangWang,Cicone2016384,ceptrum} can be applied to estimate phase functions. Hence, we only focus on the estimation of other quantities in MMD. 

Applying the idea of recursive diffeomorphism-based regression (RDBR) \cite{HZYregression}, \cite{MMD} has proposed a recursive scheme for decomposing $f(t)$ into several MIMF's, $\{f_k(t)\}$. Due to the repeated application of the expensive diffeomorphism-based regression, the method in \cite{MMD} is not suitable for analyzing large data sets, especially when real-time analysis is required. Analyzing a single record of a high-resolution ECG or PPG signal with a few minutes of duration could take a whole day. FFT-based shape function analysis in \cite{1DSSWPT,Hou2016} is efficient but they can be only applied to Model \eqref{P2} and sometimes even for the case when $K=1$ without any proof of convergence.

This dilemma motivates the design of the recursive diffeomorphism-based spectral analysis (RDSA) in this paper. From the {\bf computational point of veiw}, RDSA takes only $O(m L\log L)$ operations to solve the MMD problem by taking advantage of the NUFFT, where $L$ is the length of the signal and $m$ is the number of iterations. As we shall see later, the speedup of RDSA over RDBR in \cite{MMD} can be as large as $1000$. From the {\bf theoretical point of view}, RDSA builds the bridge between FFT-based analysis and the RDBR, leading to a complete convergence analysis and filling the gap of theoretical analysis of FFT-based approaches in \cite{1DSSWPT,Hou2016}. 

A recent paper \cite{NOP} proposed a complete framework for estimating all instantaneous quantities together in a two-step alternative fitting scheme: 1) fitting shape functions when amplitude and phase estimations are given; 2) fitting amplitude and phase functions when shape estimations are given. RDSA can be applied in this alternative fitting scheme to speed up the convergence. For many challenging numerical examples concerning crossover instantaneous frequencies, close frequencies, the elimination of numerical errors in both steps via alternative fitting, the reader is referred to \cite{NOP} for more examples. 

We will first introduce RDSA in Section \ref{sec:GMD}. The convergence of RDSA is introduced and asymptotically analyzed\footnote{Notations in the asymptotic analysis: we shall use the $O(\epsilon)$ notation, as well as the related notations  $\lesssim$ and $\gtrsim$; in particular, we write $F=O(\epsilon)G$ if there exists a constant $C$ (which we will not specify further) such that $|F|\leq C\epsilon |G|$; here $C$ may depend on some general parameters as detailed just before Theorem \ref{thm:conv3}.} in Section \ref{sec:cov}. In Section \ref{sec:NumEx}, we present some numerical examples to demonstrate the efficiency of RDSA. Finally, we conclude this paper in Section \ref{sec:con}.

\section{RDSA}
\label{sec:GMD}

\subsection{Diffeomorphism-based spectral analysis (DSA) for a single MIMF}
\label{sec:sm}
We first introduce the DSA for a single MIMF defined as follows.

\begin{definition} Generalized shape functions:
\label{def:GSF}
The generalized shape function class ${\cal S}_M$ consists of $2\pi$-periodic functions $s(t)$ in the Wiener Algebra with a unit $L^2([0,2\pi])$-norm and a $L^\infty$-norm bounded by $M$ satisfying the following spectral conditions:
\begin{enumerate}
\item The Fourier series of $s(t)$ is uniformly convergent;
\item $\sum_{n=-\infty}^{\infty}|\widehat{s}(n)|\leq M$ and $\widehat{s}(0)=0$;
\item Let $\Lambda$ be the set of integers $\{|n|: \widehat{s}(n)\neq 0\}$. The greatest common divisor $\gcd(s)$ of all the elements in $\Lambda$ is $1$.
\end{enumerate}
\end{definition}

\begin{definition}
  \label{def:m_IMF}
  A function 
  \begin{equation}
\label{eqn:m_IMF1}
f(t) = \sum_{n=-N/2}^{N/2-1} a_n\cos(2\pi n\phi(t))s_{cn}(2\pi N\phi(t))+\sum_{n=-N/2}^{N/2-1}b_n \sin(2\pi n\phi(t))s_{sn}(2\pi N\phi(t))
\end{equation}
is a MIMF of type $(M_0,M,N,\epsilon)$ defined on $[0,1]$, if the conditions below are satisfied:
\begin{itemize}
\item the shape function series $\{s_{cn}(t)\}$ and $\{s_{sn}(t)\}$ are in ${\cal S}_M$;
\item 
the multiresolution expansion coefficients $\{a_{n}\}$ and $\{b_{n}\}$ satisfy
\begin{align*}
    \sum_{n=-N/2}^{N/2-1}|a_n|\leq M, \quad  \quad  \sum_{n=-N/2}^{N/2-1}|a_n|-\sum_{n=-M_0}^{M_0-1}|a_n|\leq \epsilon, \\
  \sum_{n=-N/2}^{N/2-1}|b_n|\leq M,  \quad  \quad \sum_{n=-N/2}^{N/2-1}|b_n|-\sum_{n=-M_0}^{M_0-1}|b_n|\leq \epsilon;
\end{align*}
\item $\phi(t)$ satisfies
\begin{align*}
    \phi(t)\in C^\infty,  \quad  1/M \leq | \phi'|\leq M.
\end{align*}
\end{itemize}
\end{definition}

As usual, we assume that the phase function $N\phi(t)$ and $N$ are available; these quantities can be estimated using time-frequency concentration methods \cite{Daubechies2011,BEHERA2016,Auger1995,Chassande-Mottin2003}, or  adaptive optimization \cite{VMD,Hou2012,7990179}. With the abuse of notations, we will use the same notation for continuous and discrete functions or transforms for simplicity. Without loss of generality, we assume that the signal $f(t)$ is uniformly sampled on $[0,1]$ with $L$ grid points 
\begin{equation}
\label{eqn:sample}
\left\{t_\ell:=\frac{\ell}{L}:0\leq \ell\leq L-1,\ell\in\mathbb{Z}\right\};
\end{equation}
the discrete Fourier transform of $f$ denoted as $\widehat{f}(\xi)$ (or $\mathcal{F}(f)$) is defined on \[\left\{\xi\in\mathbb{Z}:-\frac{L}{2}\leq \xi\leq \frac{L}{2}-1 \right\}.\] 
We further assume that $f(t)$ satisfies a periodic boundary condition only in theoretical analysis for simplicity; in the case of non-periodic boundary condition, the proposed method still works but it is tedious to guarantee the estimation near the boundary theoretically. In all of our numerical examples, signals are non-periodic.

When $f(t)$ is a MIMF, the smooth function $\phi(t)$ serves as a diffeomorphism mapping $\cos(2\pi m\phi(t))f(t)$ to
  \begin{eqnarray}\label{eqn:hdef}
h(t)&=&(\cos(2\pi m\phi)f)\circ\phi^{-1}(t) \\
&=& \sum_{n=-N/2}^{N/2-1} \frac{a_n}{2}\left(\cos(2\pi (m+n)t) + \cos(2\pi (m-n)t) \right)s_{cn}(2\pi Nt)\nonumber\\
& &+\sum_{n=-N/2}^{N/2-1}\frac{b_n}{2}\left( \sin(2\pi (n+m)t)+\sin(2\pi (n-m)t)\right)s_{sn}(2\pi Nt).\nonumber
\end{eqnarray}

Let us define a scaling operator $T_{N}$ mapping a function $g(\xi)$ to a function $T_{N}(g)$ defined as follows:
\[
T_{N}(g)(\xi):=g(N\xi).
\]
In the discrete case, this is equivalent to subsampling the function $g(\xi)$ at the grid points $\{N\xi\}_{\xi\in\mathbb{Z}}$. After Fourier transform and subsampling, we have
\begin{eqnarray}
\label{eqn:hhat}
T_{N}(\widehat{h}(\xi) )&=& 2^{1-|\sgn(m)|} \frac{a_m}{2}\sum_\ell \widehat{s}_{cm}(\ell)\delta(\xi-\ell),
\end{eqnarray}
where $\delta(\cdot)$ denotes the Dirac delta function. In practice, $\widehat{h}(\xi)$ can be evaluated via the NUFFT of $\cos(2\pi m\phi(t))f(t)$ on non-uniform grids \[\left\{\psi_\ell:=\phi(\frac{\ell}{L}):0\leq \ell\leq L-1,\ell\in\mathbb{Z}\right\}.\]

Equation \eqref{eqn:hdef} and \eqref{eqn:hhat} result in
\begin{align}\label{eqn:s1}
a_ms_{cm}(2\pi t)=  2^{|\sgn(m)|}\mathcal{F}^{-1}\left(T_{N}\left(\mathcal{F}((\cos(2\pi m\phi)f)\circ\phi^{-1})\right)\right)(t), 
\end{align}
and similarly we have
\begin{align}\label{eqn:s2}
b_ms_{sm}(2\pi t)=  2^{|\sgn(m)|}\mathcal{F}^{-1}\left(T_{N}\left(\mathcal{F}((\sin(2\pi m\phi)f)\circ\phi^{-1})\right)\right)(t)      
\end{align}
for $t\in[0,1]$. Since all shape functions have a unit $L^2([0,2\pi])$-norm\footnote{In numerical implementation, we have a band-width parameter $L_s$ for shape functions, i.e., only consider the Fourier series coefficient vector $\vec{\widehat{s}}\in\mathbb{C}^{L_s}$  with entries $\widehat{s}(m)$ for $-\frac{L_s}{2}\leq m\leq \frac{L_s}{2}-1$ in the reconstruction of a shape function vector $\vec{s}\in\mathbb{R}^{L_s}$ with entries $s(2\pi t)$ sampled on the grid points $\left\{t=\frac{k}{L_s}:0\leq k\leq L_s-1,k\in\mathbb{Z}\right\}$. The discrete analog of the $L^2$-norm $\|s\|_{L^2([0,2\pi])}$ of a function $s(t)$ is defined as $\sqrt{\frac{2\pi}{L_s}}\|\vec{s}\|_{\ell^2}$.}, we have
\begin{eqnarray}\label{eqn:coefest1}
a_m=\sqrt{2\pi}\|2^{|\sgn(m)|}\mathcal{F}^{-1}\left(T_{N}\left(\mathcal{F}((\cos(2\pi m\phi)f)\circ\phi^{-1})\right)\right) \|_{L^2([0,1])},
\end{eqnarray}
and
\begin{eqnarray}\label{eqn:coefest2}
b_m=\sqrt{2\pi} \|2^{|\sgn(m)|}\mathcal{F}^{-1}\left(T_{N}\left(\mathcal{F}((\sin(2\pi m\phi)f)\circ\phi^{-1})\right)\right)    \|_{L^2([0,1])},   
\end{eqnarray}
where the prefactor $\sqrt{2\pi}$ comes from changing the integral domain from $[0,2\pi]$ to $[0,1]$. Hence,
\begin{eqnarray}\label{eqn:shapeest1}
s_{cm}(2\pi t)=& \begin{cases}
\frac{1}{a_m}2^{|\sgn(m)|}\mathcal{F}^{-1}\left(T_{N}\left(\mathcal{F}((\cos(2\pi m\phi)f)\circ\phi^{-1})\right)\right)(t), & \quad a_m\neq 0, \\
0,  & \quad a_m=0,
   \end{cases}   
\end{eqnarray}
and
\begin{eqnarray}\label{eqn:shapeest2}
s_{sm}(2\pi t)=& \begin{cases}
\frac{1}{b_m}2^{|\sgn(m)|}\mathcal{F}^{-1}\left(T_{N}\left(\mathcal{F}((\sin(2\pi m\phi)f)\circ\phi^{-1})\right)\right)(t), & \quad b_m\neq 0, \\
0,  & \quad b_m=0.
   \end{cases}   
\end{eqnarray}

The above discussion can be summarized in Algorithm \ref{alg:DSA} for estimating shape functions and expansion coefficients from a single MIMF in \eqref{eqn:m_IMF1}.

\vspace{0.2in}

\begin{algorithm2e}[H]
\label{alg:DSA}
\caption{DSA for shape functions and expansion coefficients with $O(mL\log L)$ operation complexity. The operation complexity comes from the fact that all routines are dominated by a NUFFT of size at most $L$ and point-wise summation of vectors of size at most $L$.}
Input: A single MIMF $f(t)$ in \eqref{eqn:m_IMF1} and the phase function $p(t)=N\phi(t)$ sampled over $t_\ell$, $\ell = 0,\dots, L-1$, frequency parameters $N$, a band-width parameter $L_s$, and a set of scale indices $\mathfrak{S}=\{n_1,\cdots,n_m\}$.

Output: The shape functions $s_{cn}$ and $s_{sn}$, the expansion coefficients $a_n$ and $b_n$, and the partial summation $f_{c}(t)$ and $f_{s}(t)$.

Compute the expansion coefficients $a_n$ and $b_n$ for $n\in \mathfrak{S}$ according to \eqref{eqn:coefest1} and \eqref{eqn:coefest2}\footnote{Note that in Definition \ref{def:GSF} all shape functions have zero-mean. Hence, in the numerical implementation of $T_{N}(\widehat{h})(\xi)$ in \eqref{eqn:hhat}, we will manually make $T_{N}(\widehat{h})(0)=0$, if $T_{N}(\widehat{h})(0)\neq 0$ due to noise perturbation in the signal $f(t)$. }.

Evaluate the shape functions $s_{cn}(2\pi t)$ and $s_{sn}(2\pi t)$ for $n\in \mathfrak{S}$ according to \eqref{eqn:shapeest1} and \eqref{eqn:shapeest2} on uniform grids $\left\{t=\frac{k}{L_s}:0\leq k\leq L_s-1,k\in\mathbb{Z}\right\}$.

Evaluate $a_n cos(2\pi n\phi(t))s_{cn}(2\pi N\phi(t))$ and $b_n sin(2\pi n\phi(t))s_{sn}(2\pi N\phi(t))$ for $n\in \mathfrak{S}$ based on interpolating the shape functions from uniform grids to non-uniform grids \[\left\{t=\phi(\frac{k}{L}):0\leq k\leq L-1,k\in\mathbb{Z}\right\}.\]

Compute the partial summation 
\[
f_c(t)=\sum_{n\in\mathfrak{S}} a_n cos(2\pi n\phi(t))s_{cn}(2\pi N\phi(t))
\]
and
\[
f_s(t)=\sum_{n\in\mathfrak{S}} b_n sin(2\pi n\phi(t))s_{sn}(2\pi N\phi(t)).
\]

\end{algorithm2e}

\vspace{0.2in}

\subsection{RDSA for multiple MIMF's}
\label{sec:mm}
Next, in the case of a superposition of several MIMF's, 
\begin{equation}
\label{eqn:m_mmd2}
f(t) = \sum_{k=1}^{K} f_k(t),
\end{equation}
where each
\begin{eqnarray}\label{eqn:m_IMF2}
f_k(t)&=&\sum_{n=-N/2}^{N/2-1} a_{n,k}\cos(2\pi n\phi_k(t))s_{cn,k}(2\pi N_k\phi_k(t))\nonumber\\
&&+\sum_{n=-N/2}^{N/2-1}b_{n,k} \sin(2\pi n\phi_k(t))s_{sn,k}(2\pi N_k\phi_k(t)),
\end{eqnarray}
we propose the RDSA to extract each MIMF, estimate its corresponding multiresolution expansion coefficients, and the shape function series from the superposition.

Due to the interference between different MIMF's, directly applying Algorithm \ref{alg:DSA} with an input signal in \eqref{eqn:m_mmd2} and a phase function $N_k\phi_k(t)$ would not lead to accurate estimation of the multiresolution expansion coefficients, denoted as $\dot{a}_{n,k}$ and $\dot{b}_{n,k}$, and shape function series of $f_k(t)$, denoted as $\dot{s}_{cn,k}$ and $\dot{s}_{sn,k}$. This motivates the application of Algorithm \ref{alg:DSA} combined with the recursive scheme proposed in \cite{MMD}. The intuition of the recursive scheme can be summarized as follows. Though the accuracy of Algorithm \ref{alg:DSA} might not be good, we can still get a rough estimation of $f_k(t)$, denoted as 
\begin{eqnarray*}
\dot{f}_k(t)&=&\sum_{n=-N/2}^{N/2-1} \dot{a}_{n,k}\cos(2\pi n\phi_k(t))\dot{s}_{cn,k}(2\pi N_k\phi_k(t))\nonumber\\
&&+\sum_{n=-N/2}^{N/2-1}\dot{b}_{n,k} \sin(2\pi n\phi_k(t))\dot{s}_{sn,k}(2\pi N_k\phi_k(t)).
\end{eqnarray*}
Hence, the residual signal $r(t):=f(t)-\dot{f}(t)$ is again a new superposition of MIMF's. The recursive scheme applies Algorithm \ref{alg:DSA} again to $r(t)$ to estimate new multiresolution expansion coefficients and shape function series. We hope that the new estimations can correct the estimation error in the previous step; if this correction idea is applied repeatedly, we hope that the residual signal will decay and the estimation error will approach to zero. In more particular, RDSA can be summarized in Algorithm \ref{alg:MMD1}. In the pseudo-code in Algorithm \ref{alg:MMD1}, the input and output of Algorithm \ref{alg:DSA} is denoted as 
\[
[\{s_{cn}\}_{n\in\mathfrak{S}},\{s_{sn}\}_{n\in\mathfrak{S}},\{a_{n}\}_{n\in\mathfrak{S}},\{b_{n}\}_{n\in\mathfrak{S}},f_s,f_c]=DSA(f,p,N,L_s,\mathfrak{S}).
\]
When the input $M_1$ of Algorithm \ref{alg:MMD1} is set to be empty, then Algorithm \ref{alg:MMD1} returns the $M_0$-banded multiresolution approximation to each MIMF $f_k(t)$, its corresponding multiresolution expansion coefficients, and shape function series. 

In fact, we have two for-loops to apply Algorithm \ref{alg:DSA} repeatedly to correct the estimation error: 1) one for-loop for the scale index $n$ in Algorithm \ref{alg:DSA}; 2) another one for the MIMF component index $k$ in Algorithm \ref{alg:MMD1}. Note that in the case of a superposition of several MIMF's, the estimation provided by Line $12$ in Algorithm \ref{alg:MMD1} is not accurate: the estimation error of a larger $|n|$ is much larger than that of a smaller $|n|$ because $|a_{n,k}|$ and $|b_{n,k}|$ usually decay quickly in $|n|$. As the iteration goes on, the multiresolution expansion coefficients with a small scale index ${n}$ in the residual signal will decay since previous estimation steps try to eliminate them in the residual signal; only after a sufficiently large number of iterations in $j$, Line $12$ in Algorithm \ref{alg:MMD1} can give accurate estimations for multiresolution expansion coefficients with a large $|n|$. Hence, to make Algorithm \ref{alg:MMD1} converge, a large number of iteration number $J_1$ might be required. 

\vspace{0.2in}

\begin{algorithm2e}[H]
\label{alg:MMD1}
\caption{The first RDSA for MMD. The operation complexity of this algorithm is $O(\max\{M_0,M_1\}J_1 K L\log L)$ since the essential cost is the application of the DSA in Algorithm \ref{alg:DSA} for $J_1K$ times.}
Input: $L$ points of measurement $\{f(t_\ell)\}_{\ell=0,\dots,L-1}$ with $t_\ell \in [0,1]$, estimated instantaneous phases $\{p_k\}_{k=1,\dots,K}$, an accuracy parameter $\epsilon$, the maximum iteration number $J_1$, and band-width parameters $L_s$, $M_0$ and $M_1$.

Output: A scale index set $\mathfrak{S}$, $f^{est}_k(t)$ at the sampling grid points $\{t_\ell\}_{0\leq \ell\leq L-1}$, its multiresolution expansion coefficients $\{a_{n,k}\}_{n\in\mathfrak{S}}$ and $\{b_{n,k}\}_{n\in\mathfrak{S}}$, and its shape function series $\{s_{cn,k}\}_{n\in\mathfrak{S}}$ and $\{s_{sn,k}\}_{n\in\mathfrak{S}}$ for $1\leq k\leq K$. 

\If {$M_1$ has not been specified}{
Define the scale index set $\mathfrak{S}=\{-M_0,-M_0+1,\cdots,M_0\}$.
}
\Else{
Define $\mathfrak{S}=\{-M_1+1,-M_1+2,\cdots,-M_0,\}\cup\{M_0,M_0+1,\cdots,M_1-1,\}$.
}

Initialize: let $a_{n,k}=0$, $b_{n,k}=0$, $s_{cn,k}=0$, $s_{sn,k}=0$, $f^{est}_k(t)=0$ for all $k$ and $n\in\mathfrak{S}$; let $c=\|f\|_{L^2}$; let $e=1$; let $r^{(0)}=f$.

Compute $N_k$ as the integer nearest to the average of $p'_k(t)$ for $k=1,\dots,K$.

Sort $\{N_k\}_{1\leq k\leq K}$ in an ascending order and reorder the phase functions accordingly.

\For{$j=1,2,\dots,J_1,$}{

\For{$k=1,\dots,K$}{

$[\{\bar{s}_{cn}\}_{n\in\mathfrak{S}},\{\bar{s}_{sn}\}_{n\in\mathfrak{S}},\{\bar{a}_{n}\}_{n\in\mathfrak{S}},\{\bar{b}_{n}\}_{n\in\mathfrak{S}},\bar{f}_{s},\bar{f}_c]=DSA(r^{(j-1)},p_k,N_k,L_s,\mathfrak{S})$.

$s_{cn,k}\leftarrow s_{cn,k}+\bar{s}_{cn}$ and $s_{sn,k}\leftarrow s_{sn,k}+\bar{s}_{sn}$ for $n\in \mathfrak{S}$.

Update $f^{est}_k(t)\leftarrow f^{est}_k(t) + \bar{f}_c+\bar{f}_s$.

\If {$k<K$}{
Update $r^{(j-1)}\leftarrow r^{(j-1)}- \bar{f}_c-\bar{f}_s$.
}
\Else{
Compute $r^{(j)}= r^{(j-1)}- \bar{f}_c-\bar{f}_s$.
}
}

If $\|r^{(j)}\|_{L^2}/c\leq\epsilon$, then break the for-loop.

\If{$\|r^{(j)}\|_{L^2}/c\geq e-\epsilon$}{
Break the for loop.
}
\Else{
$e=\|r^{(j)}\|_{L^2}/c$.
}
}

Let $a_{n,k}=\|s_{cn,k}\|_{L^2}$ and $s_{cn,k}=s_{cn,k}/a_{n,k}$ for all $k$ and $n\in \mathfrak{S}$.

Let $b_{n,k}=\|s_{sn,k}\|_{L^2}$ and $s_{sn,k}=s_{sn,k}/b_{n,k}$ for all $k$ and $n\in \mathfrak{S}$.
\end{algorithm2e}

\vspace{0.2in}

To reduce the number of iterations $J_1$ in Algorithm \ref{alg:MMD1}, it might be better to put the $k$-for-loop inside the $n$-for-loop as in Algorithm \ref{alg:MMD2}, i.e., eliminating the multiresolution expansion coefficients with a small $|n|$ in the residual signal first before estimating those coefficients with a large $|n|$. It is still unclear which algorithm is faster since it relies on the decay rate of multiresolution expansion coefficients in $|n|$. Hence, a block size parameter $\mathfrak{b}$ is used to make a balance: when $J_2=1$ and $\mathfrak{b}=M_0+1$ in Algorithm \ref{alg:MMD2}, Algorithm \ref{alg:MMD2} essentially becomes Algorithm \ref{alg:MMD1}; when $\mathfrak{b}=1$ in Algorithm \ref{alg:MMD2}, Algorithm \ref{alg:MMD2} only computes the multiresolution expansion coefficients and shape functions for two scale indices per iteration in $\ell$ in Line $5$ of Algorithm \ref{alg:MMD2}. In the pseudo-code in Algorithm \ref{alg:MMD2}, the input and output of Algorithm \ref{alg:MMD1} is denoted as 
\begin{eqnarray*}
[\mathfrak{S},\{f^{est}_k(t)\}_{1\leq k\leq K},\{s_{cn,k}\}_{1\leq k\leq K,n\in\mathfrak{S}},\{s_{sn,k}\}_{1\leq k\leq K,n\in\mathfrak{S}},\{a_{n,k}\}_{1\leq k\leq K,n\in\mathfrak{S}},\{b_{n,k}\}_{1\leq k\leq K,n\in\mathfrak{S}}]\\
=RDSA_1(f,\{p_k\}_{1\leq k\leq K},\epsilon,J_1,L_s,M_0,M_1).
\end{eqnarray*}

\vspace{0.2in}

\begin{algorithm2e}[H]
\label{alg:MMD2}
\caption{The second RDSA for MMD. For the purpose of simplicity, we assume that $M_0+1$ is a multiple of $\mathfrak{b}$. The operation complexity is bounded by $O(\max\{M_0,M_1\}J_1 K L\log L)$ since this is a faster algorithm than Algorithm \ref{alg:MMD2}.}
Input: $L$ points of measurement $\{f(t_\ell)\}_{\ell=0,\dots,L-1}$ with $t_\ell \in [0,1]$, estimated instantaneous phases $\{p_k\}_{k=1,\dots,K}$, accuracy parameters $\epsilon_1$ and $\epsilon_2$, the maximum iteration numbers $J_1$ and $J_2$, band-width parameters $M_0$ and $L_s$, and a block-size parameter $\mathfrak{b}$.

Output: $\mathcal{M}_{M_0}(f_k)(t)$ at the sampling grid points $\{t_\ell\}_{0\leq \ell\leq L-1}$, its multiresolution expansion coefficients $\{a_{n,k}\}_{n=-M_0,\dots,M_0}$ and $\{b_{n,k}\}_{n=-M_0,\dots,M_0}$, and its shape function series $\{s_{cn,k}\}_{n=-M_0,\dots,M_0}$ and $\{s_{sn,k}\}_{n=-M_0,\dots,M_0}$ for $1\leq k\leq K$. 

Initialize: let $a_{n,k}=0$, $b_{n,k}=0$, $s_{cn,k}=0$, $s_{sn,k}=0$, $\mathcal{M}_{M_0}(f_k)=0$ for all $k$ and $n$; let $c=\|f\|_{L^2}$; let $e=1$; let $r^{(0)}=f$.

\For{$j=1,2,\dots,J_2,$}{

\For{$\ell=1,\dots,(M_0+1)/\mathfrak{b}$}{
Compute $m=(\ell-1)\mathfrak{b}$ and apply
\begin{eqnarray*}
[\mathfrak{S},\{f^{est}_k(t)\}_{1\leq k\leq K},\{\bar{s}_{cn,k}\}_{1\leq k\leq K,n\in\mathfrak{S}},\{\bar{s}_{sn,k}\}_{1\leq k\leq K,n\in\mathfrak{S}},\{a_{n,k}\}_{1\leq k\leq K,n\in\mathfrak{S}},\\
\{b_{n,k}\}_{1\leq k\leq K,n\in\mathfrak{S}}]=RDSA_1(r^{(j-1)},\{p_k\}_{1\leq k\leq K},\epsilon_2,J_1,L_s,m,m+\mathfrak{b}).
\end{eqnarray*}

\For{$k=1,\dots,K$}{
$s_{cn,k}\leftarrow s_{cn,k}+\bar{s}_{cn,k}$ and $s_{sn,k}\leftarrow s_{sn,k}+\bar{s}_{sn,k}$.

Update $\mathcal{M}_{M_0}(f_k)(t)\leftarrow \mathcal{M}_{M_0}(f_k)(t) + \bar{f}^{est}_k$.

Compute $r^{(j-1)}\leftarrow r^{(j-1)}-\bar{f}^{est}_k$.
}

}

$r^{(j)}=r^{(j-1)}$.

If $\|r^{(j)}\|_{L^2}/c\leq\epsilon_1$, then break the for loop.

\If{$\|r^{(j)}\|_{L^2}/c\geq e-\epsilon_1$}{
Break the for loop.
}
\Else{
$e=\|r^{(j)}\|_{L^2}/c$.
}
}

Let $a_{n,k}=\|s_{cn,k}\|_{L^2}$ and $s_{cn,k}=s_{cn,k}/a_{n,k}$ for all $k=1,\dots,K$ and $n=-M_0,\dots,M_0$.

Let $b_{n,k}=\|s_{sn,k}\|_{L^2}$ and $s_{sn,k}=s_{sn,k}/b_{n,k}$ for all $k=1,\dots,K$ and $n=-M_0,\dots,M_0$.
\end{algorithm2e}

\section{Convergence analysis}
\label{sec:cov}

Although the RDSA in Algorithm \ref{alg:MMD1} and \ref{alg:MMD2} is mainly based on Fourier analysis, it can be proved that they are equivalent to the RDBR in \cite{MMD}, which leads to the theory of the convergence of RDSA. Since Algorithm \ref{alg:MMD1} is a special case of Algorithm \ref{alg:MMD2}, we will only focus on the convergence analysis of Algorithm \ref{alg:MMD2}. Without loss of generality, we assume $\mathfrak{b}=1$ in the analysis. 

\subsection{Preliminaries}

Before presenting the theory for RDSA, let us revisit RDBR for MMD in \cite{MMD}. In RDBR, if $f(t)=a_ns_{cn}(2\pi N\phi(t))$, we define the inverse-warping data by
$h(v) = f\circ p^{-1}(v)=a_n s_{cn}(2\pi v)$, 
where $v = p(t)=N\phi(t)$. As a consequence, we have a set of measurements of $h(v)$ sampled on  $\{h(v_\ell)\}_{\ell=0,\dots,L-1}$ with $v_\ell=p(t_\ell)$. Note that $h(v)$ is a periodic function with period $1$. Hence, if we define a folding map $\tau$ that folds the two-dimensional point set $\{(v_\ell,h(v_\ell))\}_{\ell=0,\dots,L-1}$  together
\begin{eqnarray}\label{eqn:fold}
\tau: \ \      \left(v_\ell, h(v_\ell) \right)   \mapsto    \left(\text{mod}(v_\ell,1),  h(v_\ell) \right),
\end{eqnarray}
then the point set $\{\tau(v_\ell,a_n s_{cn}(2\pi v_\ell))\}_{\ell=0,\dots,L-1}\subset \mathbb{R}^2$ is a two-dimensional point set located at the curve $(v,a_n s_{cn}(2\pi v))\subset \mathbb{R}^2$ given by the shape function $a_n s_{cn}(2\pi v)$ with $v\in [0,1)$. Using the notations in non-parametric regression, let $X$ be an independent random variable in $[0,1)$, $Y$ be the response random variable in $\mathbb{R}$, and consider $(x_\ell,y_\ell)=\tau(v_\ell,a_n s_{cn}(2\pi v_\ell))$ as $L$ samples of the random vector $(X,Y)$, then a simple regression results in the shape function
\begin{equation}
\label{eqn:re}
a_n s_{cn} =  s^R\defeq \underset{s:\mathbb{R}\rightarrow \mathbb{R}}{\arg\min}\quad \E\{\left| s(2\pi X)-Y\right|^2\},
\end{equation}
where the superscript $^R$ means the ground truth regression function. 

RDBR applies the partition-based regression method (or partitioning estimate) in Chapter 4 of \cite{regressionBook} to solve the above regression problem. Given a small step size $h\ll 1$, the time domain $[0,1]$ is uniformly partitioned into $N^h = \frac{1}{h}$ (assumed to be an integer) parts $\{[t^h_k,t^h_{k+1})\}_{k=0,\dots,N^h-1}$, where $t^h_k=kh$.  Let $s^P$ denote the estimated regression function by the partition-based regression method with $L$ samples. Following the definition in Chapter 4 of \cite{regressionBook}, we have a piecewise function 
\begin{eqnarray}\label{eqn:pbr}
s^P(2\pi x) \defeq\frac{\sum_{\ell=0}^{L-1}\mathcal{X}_{[t^h_k,t^h_{k+1})} (x_\ell) y_\ell }{\sum_{\ell=0}^{L-1}\mathcal{X}_{[t^h_k,t^h_{k+1})}(x_\ell) }&=&\frac{\sum_{\ell=0}^{L-1}\mathcal{X}_{[t^h_k,t^h_{k+1})} (\text{mod}(v_\ell,1)) a_n s_{cn}(2\pi v_\ell) }{\sum_{\ell=0}^{L-1}\mathcal{X}_{[t^h_k,t^h_{k+1})}(\text{mod}(v_\ell,1)) },\\
&=&\frac{\sum_{\ell=0}^{L-1}\mathcal{X}_{[t^h_k,t^h_{k+1})} (\text{mod}(N\phi(t_\ell),1)) f(t_\ell) }{\sum_{\ell=0}^{L-1}\mathcal{X}_{[t^h_k,t^h_{k+1})}(\text{mod}(N\phi(t_\ell),1)) }.\nonumber
\end{eqnarray}
when $x\in [t^h_k,t^h_{k+1})$, where $\mathcal{X}_{[t^h_k,t^h_{k+1})}(x)$ is the indicator function supported on $[t^h_k,t^h_{k+1})$. When $L$ is sufficiently large \begin{equation}\label{eqn:pbr2}
a_n s_{cn}(2\pi x)=s^R(2\pi x)\approx s^P(2\pi x)=\frac{\sum_{\ell=0}^{L-1}\mathcal{X}_{[t^h_k,t^h_{k+1})} (\text{mod}(N\phi(t_\ell),1)) f(t_\ell) }{\sum_{\ell=0}^{L-1}\mathcal{X}_{[t^h_k,t^h_{k+1})}(\text{mod}(N\phi(t_\ell),1)) }
\end{equation}
when $x\in [t^h_k,t^h_{k+1})$, and the approximation is robust against noise perturbation \cite{regressionBook} (Chapter $4$). 

More rigorously, the following theorem given in Chapter 4 in \cite{regressionBook} estimates the $L_2$ risk of the approximation $s^P\approx s^R$ as follows.

\begin{theorem}
\label{thm:reg}
For the uniform partition with a step size $h$ in $[0,1)$ as defined just above, assume that 
\[
\Var(Y|X=x)\leq \sigma^2,\quad x\in\mathbb{R},
\]
\[
|s^R(x)-s^R(z)|\leq C|x-z|,\quad x,z\in\mathbb{R},
\]
$X$ has a compact support $[0,1)$, and there are $L$ i.i.d. samples of $(X,Y)$. Then the partition-based regression method provides an estimated regression function $s^P$ to approximate the ground truth regression function $s^R$, where
\begin{equation*}
s^R = \underset{s:\mathbb{R}\rightarrow \mathbb{R}}{\arg\min}\quad \E\{\left| s(2\pi X)-Y\right|^2\},
\end{equation*}
with an $L^2$ risk bounded by
\begin{equation}\label{bnd}
\E\|s^P-s^R\|^2\leq c_0\frac{\sigma^2+\|s^R\|^2_{L^\infty}}{Lh}+C^2h^2,
\end{equation}
where $c_0$ is a constant independent of the number of samples $L$, the regression function $s^R$,  the step size $h$, and the Lipschitz continuity constant $C$.
\end{theorem}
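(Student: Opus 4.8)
The plan is to run the classical bias--variance analysis of partitioning (regressogram) estimates, conditioning on the design points. Write $A_k=[t^h_k,t^h_{k+1})$ for the $k$-th cell, $N_k=\sum_{\ell=0}^{L-1}\mathcal{X}_{A_k}(x_\ell)$ for the number of samples it contains, and let $\mu$ denote the law of $X$ on $[0,1)$, so that $\|\cdot\|$ is understood as the $L^2(\mu)$ norm. Adopt the usual convention that $s^P$ takes a fixed default value on any cell with $N_k=0$. Conditioning on the design $\mathcal{D}=\{x_0,\dots,x_{L-1}\}$, I would split the pointwise error on each occupied cell as
\[
s^P(2\pi x)-s^R(2\pi x)=\bigl(s^P(2\pi x)-\bar s(2\pi x)\bigr)+\bigl(\bar s(2\pi x)-s^R(2\pi x)\bigr),
\]
where $\bar s(2\pi x)=\E[\,s^P(2\pi x)\mid\mathcal{D}\,]=N_k^{-1}\sum_\ell \mathcal{X}_{A_k}(x_\ell)\,s^R(2\pi x_\ell)$ for $x\in A_k$. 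The first summand is the stochastic (variance) part and the second is the deterministic (bias) part; since they are conditionally orthogonal, $\E\|s^P-s^R\|^2$ separates into a bias contribution and a variance contribution.

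\textbf{Bias.} On each occupied cell, $\bar s(2\pi x)-s^R(2\pi x)=N_k^{-1}\sum_\ell \mathcal{X}_{A_k}(x_\ell)\bigl(s^R(2\pi x_\ell)-s^R(2\pi x)\bigr)$ is a convex combination of increments of $s^R$ between points of the same cell. The Lipschitz hypothesis gives $|s^R(2\pi x_\ell)-s^R(2\pi x)|\le C\,|2\pi x_\ell-2\pi x|\lesssim Ch$ because $x_\ell,x\in A_k$ and $A_k$ has width $h$; hence the bias is pointwise $O(Ch)$ uniformly in the design, and its square integrates to the $C^2h^2$ term (up to an absolute constant).

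\textbf{Variance.} Given $\mathcal{D}$, the residuals $Y_\ell-s^R(2\pi x_\ell)$ are independent with variance at most $\sigma^2$, so $\Var\bigl(s^P(2\pi x)\mid\mathcal{D}\bigr)=N_k^{-2}\sum_\ell \mathcal{X}_{A_k}(x_\ell)\Var(Y_\ell\mid x_\ell)\le \sigma^2/N_k$ on $A_k$ when $N_k>0$. Integrating over each cell and summing over the $N^h=1/h$ cells, the variance contribution reduces to bounding $\E[\mathbf{1}_{\{N_k>0\}}/N_k]$, where $N_k\sim\mathrm{Binomial}(L,\mu(A_k))$. The elementary reciprocal-moment inequality $\E[\mathbf{1}_{\{N_k>0\}}/N_k]\le 2/(L\,\mu(A_k))$ makes each per-cell term $O(\sigma^2/L)$, and summing over $1/h$ cells yields the $\sigma^2/(Lh)$ piece. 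The empty cells, on which $s^P$ equals its default value and the error is at most $\|s^R\|_{L^\infty}^2$, are controlled through their total probability $\sum_k(1-\mu(A_k))^L$; combining this with the variance estimate is what produces the $\|s^R\|_{L^\infty}^2$ in the numerator and pins down a universal constant $c_0$ independent of $L$, $h$, $C$, and $s^R$.

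The main obstacle is the variance term, and specifically the handling of cells with few or no samples. Controlling $\E[\mathbf{1}_{\{N_k>0\}}/N_k]$ needs the (elementary but not entirely trivial) bound on the reciprocal of a positivity-conditioned binomial, and the bookkeeping of empty cells is exactly what forces the $\|s^R\|_{L^\infty}^2$ contribution and guarantees that $c_0$ is universal. Everything else---the conditional orthogonality and the Lipschitz bias bound---is routine once the design is fixed. This is in essence the argument of Chapter 4 of \cite{regressionBook}, specialized to the uniform partition of $[0,1)$ used here.
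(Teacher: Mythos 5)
Your proof is correct and is essentially the argument the paper itself relies on: the paper does not prove Theorem \ref{thm:reg} but quotes it from Chapter 4 of \cite{regressionBook}, whose proof (Theorem 4.3 combined with the binomial reciprocal-moment bound, Lemma 4.1 there) is precisely your conditional bias--variance decomposition, the occupied/empty-cell split with the $\|s^R\|_{L^\infty}^2$ term coming from $\mu(A_k)(1-\mu(A_k))^L\lesssim 1/L$, and the estimate $\E[\mathbf{1}_{\{N_k>0\}}/N_k]\le 2/(L\,\mu(A_k))$. The only blemish is cosmetic: since the regressed function is $s^R(2\pi x)$ while the Lipschitz hypothesis is stated for $s^R$ itself, your per-cell bias is $2\pi Ch$, so the second term comes out as $(2\pi)^2C^2h^2$ rather than the literal $C^2h^2$ of \eqref{bnd} --- an imprecision inherited from the paper's statement and harmless under its $O(\cdot)$ conventions.
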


If $f(t)$ is a MIMF, i.e.,
\begin{equation}\label{eqn:fs}
f(t)= \sum_{n=-N/2}^{N/2-1} a_n\cos(2\pi n\phi(t))s_{cn}(2\pi N\phi(t))+\sum_{n=-N/2}^{N/2-1}b_n \sin(2\pi n\phi(t))s_{sn}(2\pi N\phi(t)),
\end{equation}
under the same condition as in \eqref{eqn:pbr2}, it was shown in \cite{MMD} that 
\begin{equation}\label{eqn:pbr3}
a_n s_{cn}(2\pi x)\approx \frac{2^{|\sgn(n)|}\sum_{\ell=0}^{L-1}\mathcal{X}_{[t^h_k,t^h_{k+1})} (\text{mod}(N\phi(t_\ell),1))\cos(2\pi n\phi(t_\ell)) f(t_\ell) }{\sum_{\ell=0}^{L-1}\mathcal{X}_{[t^h_k,t^h_{k+1})}(\text{mod}(N\phi(t_\ell),1)) }
\end{equation}
and
\begin{equation}\label{eqn:pbr4}
b_n s_{sn}(2\pi x)\approx \frac{2^{|\sgn(n)|}\sum_{\ell=0}^{L-1}\mathcal{X}_{[t^h_k,t^h_{k+1})} (\text{mod}(N\phi(t_\ell),1))\sin(2\pi n\phi(t_\ell)) f(t_\ell) }{\sum_{\ell=0}^{L-1}\mathcal{X}_{[t^h_k,t^h_{k+1})}(\text{mod}(N\phi(t_\ell),1)) }
\end{equation}
when $x\in [t^h_k,t^h_{k+1})$, by a similar argument as in \eqref{eqn:pbr2} and the fact that the oscillation in amplitude functions $\cos(2\pi n\phi(t))$ and $\sin(2\pi n\phi(t))$ removes the influence of other terms in \eqref{eqn:fs} on the estimation of $a_n s_{cn}(2\pi x)$ and $b_n s_{sn}(2\pi x)$, respectively.

In practice, in the case of a superposition of MIMF's, RDBR uses the same recursive algorithm as in Algorithm \ref{alg:MMD2} (when $\mathfrak{b}=1$) to solve the MMD problem. Unlike RDSA that uses the DSA in Algorithm \ref{alg:DSA}\footnote{The DSA is called in Line $12$ in Algorithm \ref{alg:MMD1}, which is called in Line $6$ in Algorithm \ref{alg:MMD2}.} to estimate shape functions, RDBR applies \eqref{eqn:pbr3} and \eqref{eqn:pbr4}. Even though in each iteration \eqref{eqn:pbr3} and \eqref{eqn:pbr4} cannot give exact estimation, \cite{MMD} proves that the estimation error can be corrected recursively as long as the MIMF's are well-differentiated. The well-differentiation of MIMF's relies on the well-differentiation of phase functions. Denote the set of sampling grid points $\{t_\ell\}_{\ell=0,\dots,L-1}$ in \eqref{eqn:sample} as $\T$. $\T$ is divided into several subsets as follows. For $i,j=1,\dots,K$, $i\neq j$, $m,n=0,\dots,N^h-1$, let 
\[
\T^{ij}_h(m,n)=\left\{ t\in \T: \mod(p_i(t),1)\in[t^h_m,t^h_m+h),\mod(p_j(t),1)\in[t^h_n,t^h_n+h)\right\},
\]
and
\[
\T^{i}_h(m)=\left\{ t\in \T: \mod(p_i(t),1)\in[t^h_m,t^h_m+h)\right\},
\]
then $\T=\cup_{m=0}^{N^h-1} \T^{i}_h(m)=\cup_{m=0}^{N^h-1}\cup_{n=0}^{N^h-1} \T^{ij}_h(m,n)$. 
Let 
\begin{equation}
\label{eqn:D}
D^{ij}_h(m,n)\quad \text{ and }\quad D^{i}_h(m)
\end{equation}
 denote the number of points in  $\T^{ij}_h(m,n)$ and $\T^{i}_h(m)$, respectively.

\begin{definition}
  \label{def:wd}
  Suppose phase functions $p_k(t)= N_k \phi_k(t)$ for $t\in[0,1]$, and $k=1,\dots,K$, where $\phi_k(t)$ satisfies
  \begin{align*}
    \phi_k(t)\in C^\infty,  \quad  1/M\leq | \phi_k'|\leq M.
   \end{align*}
Then the collection of phase functions $\{p_k(t)\}_{1\leq k\leq K}$ is said to be $(M,N,K,h,\beta,\gamma)$-well-differentiated and denoted as $\{p_k(t)\}_{1\leq k\leq K}\subset\WD(M,N,K,h,\beta,\gamma)$, if the following conditions are satisfied:
\begin{enumerate}
\item $N_k\geq N$ for $k=1,\dots,K$;
\item $  \gamma\defeq  \underset{m,n,i\neq j }{\min}D^{ij}_h(m,n)$ satisfies $\gamma>0$ 
, where $D^{ij}_h(m,n)$ (and $D^{i}_h(m)$  below) is defined in \eqref{eqn:D};
\item Let \[\beta_{i,j} \defeq \left(\sum_{m=0}^{N^h-1} \frac{1}{D^{i}_h(m) }\left( \sum_{n=0}^{N^h-1} ( D^{ij}_h(m,n)-\gamma )^2 \right)\right)^{1/2}\] for all $i\neq j$, then $\beta\defeq \max\{\beta_{i,j}:i\neq j\}$ satisfies $M^2(K-1)\beta<1$.
\end{enumerate}
\end{definition}

\begin{definition}
  \label{def:wd}
  Suppose 
  \begin{eqnarray*}
f_k(t)=\sum_{n=-N_k/2}^{N_k/2-1} a_{n,k}\cos(2\pi n\phi_k(t))s_{cn,k}(2\pi N_k\phi_k(t))+\sum_{n=-N_k/2}^{N_k/2-1}b_{n,k} \sin(2\pi n\phi_k(t))s_{sn,k}(2\pi N_k\phi_k(t)).
\end{eqnarray*}
is a MIMF of type $(M_0,M,N_k,\epsilon)$ for $t\in[0,1]$, $k=1,\dots,K$, and \[\{p_k(t)=N_k\phi_k(t)\}_{1\leq k\leq K}\subset\WD(M,N,K,h,\beta,\gamma),\] then  $f(t)=\sum_{k=1}^K f_k(t)$ is said to be a well-differentiated superposition of MIMF's of type $(M_0,M,N,K,h,\beta,\gamma,\epsilon)$. Denote the set of all these functions $f(t)$ as $\WS(M_0,M,N,K,h,\beta,\gamma,\epsilon)$.
\end{definition}

We recall again that in the case of $ \mathfrak{b}=1$, RDBR replaces DSA in Line $12$ of Algorithm \ref{alg:MMD1}, which is used in Line $6$ in Algorithm \ref{alg:MMD2}, with \eqref{eqn:pbr3} and \eqref{eqn:pbr4} to estimate shape functions. Under the well-differentiation condition introduced just above, \cite{MMD} proves that the estimation error of  \eqref{eqn:pbr3} and \eqref{eqn:pbr4} (denoted as $s^{E,(j)}_{cn,k}$ and $s^{E,(j)}_{sn,k}$\footnote{The estimation error of a shape function at step $j$, $s^{E,(j)}_{cn,k}$, is defined as the difference of the ground truth regression function of the regression problem at step $j$ and the target shape function at step $j$, $a_{n,k}^{(j-1)}s^{(j-1)}_{cn,k}$,  i.e.,
\[
s^{E,(j)}_{cn,k}= \underset{s:\mathbb{R}\rightarrow \mathbb{R}}{\arg\min}\quad \E\{\left| s(2\pi X_{cn,k}^{(j-1)})-Y_{cn,k}^{(j-1)}\right|^2\}-a_{n,k}^{(j-1)}s^{(j-1)}_{cn,k},
\]
where $(X_{cn,k}^{(j-1)},Y_{cn,k}^{(j-1)})$ has samples $(\text{mod}(N_k\phi_k(t_\ell),1),2^{|\sgn(n)|}\cos(2\pi n\phi_k(t_\ell)) r^{(j-1)}(t_\ell) )$ for $t_\ell$ from \eqref{eqn:sample}, where $r^{(j-1)}$ is the MIMF at step $j$ as used in Line $6$ in Algorithm \ref{alg:MMD2}. Similarly, we define the estimation error $s^{E,(j)}_{sn,k}$ as
\[
s^{E,(j)}_{sn,k}= \underset{s:\mathbb{R}\rightarrow \mathbb{R}}{\arg\min}\quad \E\{\left| s(2\pi X_{sn,k}^{(j-1)})-Y_{sn,k}^{(j-1)}\right|^2\}-b_{n,k}^{(j-1)}s^{(j-1)}_{sn,k},
\]
where $(X_{sn,k}^{(j-1)},Y_{sn,k}^{(j-1)})$ has samples $(\text{mod}(N_k\phi_k(t_\ell),1),2^{|\sgn(n)|}\sin(2\pi n\phi_k(t_\ell)) r^{(j-1)}(t_\ell) )$.
}, respectively) in each iteration of the for-loop for $J_2$ in Algorithm \ref{alg:MMD2} can be corrected recursively: the estimation errors of shape functions in the $j$-th step becomes the target shape function to be estimated in the $(j+1)$-th step; to show the convergence of RDBR, it is sufficient to show that $s^{E,(j)}_{cn,k}$ and $s^{E,(j)}_{sn,k}$ decays as $j\rightarrow \infty$. Theorem \ref{thm:conv3} below (see the proof of Theorem $3.3$ in \cite{MMD}) shows that the estimation error decays to $O(\epsilon)$ as the iteration number goes to infinity. 

Recall that, when we write $O(\cdot)$, $\lesssim$, or $\gtrsim$, the implicit constants may depend on $M_0$, $M$, $K$, $C$, and no other parameters.

\begin{theorem}
\label{thm:conv3} (Convergence of RDBR for MMD) Suppose all shape functions are in the space of Lipschitz continuous functions with a constant $C$ and $\epsilon$ is an accuracy parameter. Assume that $J_1=1$, \eqref{eqn:pbr3} and \eqref{eqn:pbr4} are used to estimate shape functions instead of DSA  in Algorithm \ref{alg:MMD2}. For fixed $\epsilon$, $M_0$, $M$, $K$, and $C$, there exists $h_0(\epsilon,C)$ such that $\forall h<h_0(\epsilon,C)$, there exist $L_0(\epsilon,M_0,M,K,C,h)$ and $N_0(\epsilon,M,K,C,h)$ such that, when $L>L_0$, $N>N_0$, and $f(t)\in\WS(M_0,M,N,K,h,\beta,\gamma,\epsilon)$, we have  
 \[
  \|s^{E,(j)}_{cn,k}\|_{L^2}\leq O(c_0 \epsilon+( \beta  (2M_0+1)(K-1))^j ),
 \]
 and 
  \[
  \|s^{E,(j)}_{sn,k}\|_{L^2}\leq O(c_0 \epsilon+(\beta  (2M_0+1)(K-1))^j )
 \]
for all $j\geq 0$ and $1\leq k\leq K$, where $c_0=\frac{1}{1-\beta  (2M_0+1)(K-1)}$ is a constant number, $s^{E,(j)}_{cn,k}$ and $s^{E,(j)}_{sn,k}$ are defined just before this theorem.
\end{theorem}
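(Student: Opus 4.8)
The plan is to reduce the statement to a single scalar contraction inequality and then unroll it as a geometric series. To this end I would define the aggregate error at step $j$ by
\[
E^{(j)} \defeq \max_{n,k}\bigl\{\|s^{E,(j)}_{cn,k}\|_{L^2},\ \|s^{E,(j)}_{sn,k}\|_{L^2}\bigr\},
\]
where the maximum runs over all components $k=1,\dots,K$ and all active scale indices $n$. The recursive-correction mechanism described just before the theorem says that the estimation error produced at step $j$ is precisely the target shape to be fitted at step $j+1$; hence it suffices to prove the one-step bound
\[
E^{(j+1)} \;\lesssim\; \beta(2M_0+1)(K-1)\,E^{(j)} \;+\; O(\epsilon),
\]
and then solve this recurrence. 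Throughout I would invoke Theorem \ref{thm:reg} to replace the partition estimate $s^P$ by the ground-truth regression function $s^R$, choosing $h<h_0(\epsilon,C)$ and $L>L_0$, $N>N_0$ so that the discretization risk in \eqref{bnd} is dominated by the $O(\epsilon)$ term.

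The first substantive step is to write out the ground-truth regression function $s^{R,(j+1)}_{cn,k}$ attached to the samples $(\mathrm{mod}(N_k\phi_k(t_\ell),1),\,2^{|\sgn(n)|}\cos(2\pi n\phi_k(t_\ell))\,r^{(j)}(t_\ell))$ as a partition average over the cells $[t^h_m,t^h_{m+1})$, exactly in the form \eqref{eqn:pbr3}, and to split the residual $r^{(j)}$ into the contribution of $f_k$ itself and those of the competing modes $f_i$, $i\neq k$. For the self-contribution, averaging the factor $2^{|\sgn(n)|}\cos^2(2\pi n\phi_k)=1+\cos(4\pi n\phi_k)$ (for $n\neq0$) over a cell of the fast phase $N_k\phi_k$ returns the target $a_{n,k}s_{cn,k}$ up to the oscillatory remainder $\cos(4\pi n\phi_k)$, which averages to a negligible quantity once $N>N_0$; the intra-mode cross-scale terms $\cos(2\pi n\phi_k)\cos(2\pi n'\phi_k)$ with $n'\neq n$ average out for the same reason, and the $\epsilon$-tail of the coefficient sums beyond $M_0$ in the type-$(M_0,M,N_k,\epsilon)$ assumption supplies the additive $O(\epsilon)$.

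The heart of the argument, and the step I expect to be the main obstacle, is bounding the cross-interference from each $f_i$ with $i\neq k$. Because the partition cells are built from $\phi_k$ while $f_i$ oscillates through the independent phase $\phi_i$, the well-differentiation hypothesis controls how far the empirical distribution of $\mathrm{mod}(N_i\phi_i,1)$ inside a $\phi_k$-cell departs from uniform. Expressing the cell average in terms of the counts $D^{ij}_h(m,n)$ and applying Cauchy--Schwarz across the sub-cells, the zero-mean property $\widehat{s}(0)=0$ kills the leading uniform term, leaving a fluctuation of size $\beta_{i,k}\|s^{E,(j)}_{cn',i}\|_{L^2}$ (and its sine analogue) for each interfering shape of scale $n'$. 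Summing over the $2M_0+1$ scale indices and the $K-1$ competing modes, across both the cosine and sine channels, produces the factor $\beta(2M_0+1)(K-1)$ multiplying $E^{(j)}$; keeping the constants $M_0,M,K,C$ fixed is what allows all implicit constants to be absorbed into the $O(\cdot)$ notation.

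Finally, with $\rho\defeq\beta(2M_0+1)(K-1)<1$ (the regime in which $c_0=(1-\rho)^{-1}$ is finite, guaranteed by the well-differentiation condition), unrolling $E^{(j+1)}\lesssim\rho\,E^{(j)}+O(\epsilon)$ gives
\[
E^{(j)} \;\lesssim\; \rho^{\,j}E^{(0)} + O(\epsilon)\sum_{i\geq0}\rho^{\,i} \;=\; O\!\bigl(\rho^{\,j} + c_0\,\epsilon\bigr),
\]
since the initial error $E^{(0)}$ is $O(1)$ under the type assumptions. This is exactly the claimed bound for both $\|s^{E,(j)}_{cn,k}\|_{L^2}$ and $\|s^{E,(j)}_{sn,k}\|_{L^2}$, with $c_0=\tfrac{1}{1-\beta(2M_0+1)(K-1)}$.
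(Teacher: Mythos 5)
Your proposal is correct and takes essentially the same route as the paper, which itself only sketches this result and defers to the proof of Theorem 3.3 in \cite{MMD} (and Theorem 3.5 in \cite{HZYregression}): choose $h<h_0(\epsilon,C)$ so that $C^2h^2<\epsilon^2$ and $L>L_0$ so that the partition-regression risk in Theorem \ref{thm:reg} is bounded by $\epsilon^2$, control the cross-mode interference through the well-differentiation quantities $D^{ij}_h(m,n)$, $\gamma$, and $\beta$ exactly as in your Cauchy--Schwarz step, and unroll the resulting one-step contraction into a geometric series with $c_0=(1-\beta(2M_0+1)(K-1))^{-1}$. The one point to keep sharp in a full write-up is that the one-step bound must carry the factor $\beta(2M_0+1)(K-1)$ \emph{exactly} (no hidden constant multiplying the contraction term, which your derivation of $\beta_{i,k}$ from the definition of $\beta$ does deliver, with all slack pushed into the additive $O(\epsilon)$), since a multiplicative constant $C'>1$ there would unroll to $(C'\rho)^j$ rather than the stated $\rho^j$.
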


As shown in \cite{MMD}, in each regression step, the variation of noise perturbation, which comes from the interference of other components, is bounded by a constant depending only on $M_0$, $M$ and $K$. For the fixed $\epsilon$ and $C$, there exists $h_0(\epsilon,C)$ such that $C^2h^2<\epsilon^2$ if $0<h<h_0$. For the fixed $\epsilon$, $M_0$, $M$, $K$, $C$, and $h$, there exists $L_0(\epsilon,M_0,M,K,C,h)$ such that, if $L>L_0$, then the $L^2$ error of the partition-based regression is bounded by $\epsilon^2$ according to Theorem  \ref{thm:reg}. Under these conditions, one can prove Theorem \ref{thm:conv3} using classical inequalities like the triangle inequality, H{\"o}lder's inequality, and Taylor expansion, following the steps in \cite{MMD} (Theorem $3.3$) and the ideas in \cite{HZYregression} (Theorem $3.5$).

\subsection{Theory of RDSA}

With the theory of RDBR introduced in the previous section, we are ready to prove the convergence of RDSA in this section. The main idea is to prove that RDSA is a special kind of RDBR by the downsampling theorem (aliasing theorem).

\begin{definition}
Suppose $L$, $N$, and $\frac{L}{N}$ are integers. A downsampling operator, denoted as $\cD_{N,L}$ with a factor $N$ is a map from $x\in\mathbb{C}^{L}$ to $y=\cD_{N,L}(x)\in\mathbb{C}^{\frac{L}{N}}$ such that
\[
y[n]=x[nN]
\]
for $n=0,\cdots,\frac{L}{N}-1$.
\end{definition} 

\begin{definition}
Suppose $L$, $N$, and $\frac{L}{N}$ are integers. An aliasing operator, denoted as $\cA_{N,L}$ with a factor $N$ is a map from $x\in\mathbb{C}^{L}$ to $y=\cA_{N,L}(x)\in\mathbb{C}^{\frac{L}{N}}$ such that
\[
y[n]=\sum_{j=0}^{N-1}x[n+j\frac{L}{N}]
\]
for $n=0,\cdots,\frac{L}{N}-1$.
\end{definition}

\begin{theorem}
\label{thm:al}
(Downsampling Theorem) Suppose $L$, $N$, and $\frac{L}{N}$ are integers. For all $x\in\mathbb{C}^L$, it holds that \[\mathcal{F}(\cD_{N,L}(x))=\frac{1}{N}\cA_{N,L}(\mathcal{F}(x)),\] where $\mathcal{F}$ denotes the discrete Fourier transform.
\end{theorem}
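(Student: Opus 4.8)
The plan is to verify the identity entrywise by direct computation, using only the definitions of the discrete Fourier transform, the downsampling operator $\cD_{N,L}$, and the aliasing operator $\cA_{N,L}$, together with the orthogonality of roots of unity. Throughout I write $M=L/N$ for the length of the downsampled vector and fix the unnormalized DFT convention so that the only real subtlety is keeping the two different transform lengths straight: the transform on the right-hand side is over length $L$, while the transform on the left-hand side is over length $M$.

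First I would expand the right-hand side. Writing $\hat{x}=\mathcal{F}(x)$ and substituting the definition of $\cA_{N,L}$, followed by the definition of $\hat{x}$, gives for each $k=0,\dots,M-1$
\[
\cA_{N,L}(\hat{x})[k]=\sum_{j=0}^{N-1}\hat{x}[k+jM]=\sum_{\ell=0}^{L-1}x[\ell]\,e^{-2\pi i \ell k/L}\sum_{j=0}^{N-1}e^{-2\pi i \ell j M/L},
\]
where I have swapped the order of summation and factored out the part of the exponential that does not depend on $j$.

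The crux of the argument is the inner geometric sum. Since $M/L=1/N$, this sum equals $\sum_{j=0}^{N-1}e^{-2\pi i \ell j/N}$, which by orthogonality of the $N$-th roots of unity equals $N$ when $\ell\equiv 0 \pmod{N}$ and $0$ otherwise. This is the step that deserves the most care: it is exactly what collapses the full sum over $\ell\in\{0,\dots,L-1\}$ onto the subsampled indices, and it relies on the hypothesis $N\mid L$ so that the index ranges are consistent. Keeping only the surviving terms $\ell=nN$ for $n=0,\dots,M-1$, and using $e^{-2\pi i (nN)k/L}=e^{-2\pi i n k/M}$, I obtain
\[
\cA_{N,L}(\hat{x})[k]=N\sum_{n=0}^{M-1}x[nN]\,e^{-2\pi i n k/M}=N\,\mathcal{F}(\cD_{N,L}(x))[k].
\]

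Dividing by $N$ yields the claimed identity entrywise for every $k$, and hence as an identity of vectors in $\mathbb{C}^{M}$. I do not anticipate any genuine obstacle: this is the standard discrete aliasing theorem, and the only places to remain vigilant are the bookkeeping of the two transform lengths and the divisibility assumption $N\mid L$, which is precisely what makes the root-of-unity sum collapse cleanly.
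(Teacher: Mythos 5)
Your proof is correct and complete: the paper itself supplies no argument for Theorem \ref{thm:al}, deferring instead to \cite{MDFT07}, and your entrywise computation---expanding $\cA_{N,L}(\mathcal{F}(x))[k]$, swapping sums, and collapsing the inner geometric sum $\sum_{j=0}^{N-1}e^{-2\pi i \ell j/N}$ by orthogonality of the $N$-th roots of unity onto the indices $\ell=nN$---is precisely the standard proof found in that reference. You have also handled the two genuine subtleties correctly: the factor $\frac{1}{N}$ presupposes the unnormalized DFT convention (under a length-normalized convention the constant would disappear), and the paper's centered frequency indexing $-\frac{L}{2}\leq\xi\leq\frac{L}{2}-1$ is immaterial since $\mathcal{F}(x)$ is $L$-periodic, so $\cA_{N,L}$ sums over a full residue class modulo $\frac{L}{N}$ either way.
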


The reader is referred to  \cite{MDFT07} for  the proof of Theorem \ref{thm:al}. An immediate result of Theorem \ref{thm:al} is the following convergence theorem for RDSA.

\begin{theorem}
\label{thm:conv4} (Convergence of RDSA for MMD) Suppose all shape functions are in the space of Lipschitz continuous functions with a constant $C$ and $\epsilon$ is an accuracy parameter. Assume that $J_1=1$ and Algorithm \ref{alg:DSA} is used to estimate shape functions in Algorithm \ref{alg:MMD2}. For fixed $\epsilon$, $M_0$, $M$, $K$, and $C$, there exists $h_0(\epsilon,C)$ such that $\forall h<h_0(\epsilon,C)$, there exist $L_0(\epsilon,M_0,M,K,C,h)$ and $\bar{N}_0(\epsilon,M_0,M,K,C,h)$ such that, when $L>L_0$, $N>\bar{N}_0$, $\frac{N}{L}<h_0$, and $f(t)\in\WS(M_0,M,N,K,\frac{N}{L},\beta,\gamma,\epsilon)$, we have  
 \[
  \|s^{E,(j)}_{cn,k}\|_{L^2}\leq O(c_0 \epsilon+( \beta  (2M_0+1)(K-1))^j ),
 \]
 and 
  \[
  \|s^{E,(j)}_{sn,k}\|_{L^2}\leq O(c_0 \epsilon+(\beta  (2M_0+1)(K-1))^j )
 \]
for all $j\geq 0$ and $1\leq k\leq K$, where $c_0=\frac{1}{1-\beta  (2M_0+1)(K-1)}$ is a constant number, $s^{E,(j)}_{cn,k}$ and $s^{E,(j)}_{sn,k}$ are defined just before this theorem.
\end{theorem}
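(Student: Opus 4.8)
The plan is to reduce Theorem~\ref{thm:conv4} to Theorem~\ref{thm:conv3} by showing that, on any input residual, the DSA update of Algorithm~\ref{alg:DSA} used inside Algorithm~\ref{alg:MMD2} returns exactly the shape-function estimate produced by the partition-based regression formulas \eqref{eqn:pbr3} and \eqref{eqn:pbr4}, once the partition step size is taken to be $h=\tfrac{N}{L}$. If this per-step identity holds, then RDSA and RDBR generate identical residuals $r^{(j)}$ and identical errors $s^{E,(j)}_{cn,k}$, $s^{E,(j)}_{sn,k}$, so the decay bounds of Theorem~\ref{thm:conv3} transfer verbatim. This also explains the hypothesis of Theorem~\ref{thm:conv4}: the well-differentiation class is $\WS(M_0,M,N,K,\tfrac{N}{L},\beta,\gamma,\epsilon)$ precisely because the RDBR partition mesh is being replaced by the DSA downsampling grid, whose spacing is $\tfrac{N}{L}$.

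First I would recast the DSA update. By \eqref{eqn:s1} and \eqref{eqn:s2}, the per-step cosine estimate of $a_m s_{cm}$ is $2^{|\sgn(m)|}\mathcal{F}^{-1}\bigl(T_N(\mathcal{F}((\cos(2\pi m\phi)r)\circ\phi^{-1}))\bigr)$, with $r$ the current residual, and the subsampling operator $T_N$ keeps every $N$th Fourier coefficient, i.e.\ $T_N=\cD_{N,L}$ in the frequency variable. Applying Theorem~\ref{thm:al} in its inverse-transform form turns this frequency downsampling into time-domain aliasing: writing $h=(\cos(2\pi m\phi)r)\circ\phi^{-1}$, one has \[\mathcal{F}^{-1}(\cD_{N,L}(\widehat h))[n]=\tfrac{1}{N}\,\cA_{N,L}(h)[n]=\tfrac{1}{N}\sum_{j=0}^{N-1}h\!\left[n+j\tfrac{L}{N}\right].\] Thus the DSA estimate is, at each of its $\tfrac{L}{N}$ output nodes, a fold of the inverse-warped data with period $\tfrac1N$, collecting exactly those values whose warped coordinate $\text{mod}(N\phi(t_\ell),1)$ shares a residue class modulo $\tfrac1N$.

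Next I would match this fold with the partition binning of \eqref{eqn:pbr3}. With $h=\tfrac{N}{L}$ there are $N^h=\tfrac{L}{N}$ uniform bins $[t^h_k,t^h_{k+1})$, one per downsampled output node, and the aliasing sum gathers precisely the terms $\cos(2\pi m\phi(t_\ell))r(t_\ell)$ whose folded phase lands in the $k$th bin; the prefactor $2^{|\sgn(m)|}$ and the zero-mean enforcement (footnote to Algorithm~\ref{alg:DSA}, matching $\widehat s(0)=0$ in Definition~\ref{def:GSF}) already agree. Hence the numerators of the two estimates coincide, and the denominator of \eqref{eqn:pbr3}, the bin population $\sum_\ell \mathcal{X}_{[t^h_k,t^h_{k+1})}(\text{mod}(N\phi(t_\ell),1))$, must be supplied by the density weighting that the type-1 NUFFT applies when mapping the non-uniform nodes $\psi_\ell=\phi(t_\ell)$ to the uniform frequency grid. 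The remaining conditions of Theorem~\ref{thm:conv4} are then exactly those that make this identification quantitative so that Theorem~\ref{thm:conv3} can be invoked: $\tfrac{N}{L}<h_0$ controls the $C^2h^2$ partition term, $L>L_0$ controls the $L^2$ risk through Theorem~\ref{thm:reg}, and $N>\bar N_0$ guarantees that the oscillatory factors $\cos(2\pi(m\pm n)v)$, $\sin(2\pi(m\pm n)v)$ average out across each fold so that only the shape function survives, exactly the mechanism behind \eqref{eqn:pbr3}--\eqref{eqn:pbr4}.

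The step I expect to be hardest is the normalization in the previous paragraph. Because the warped nodes $\psi_\ell$ are non-uniform, the folded sum produced by $\mathcal{F}^{-1}\circ T_N\circ\mathcal{F}$ carries an implicit density weight, and one must show that it reproduces the explicit bin-count normalization of \eqref{eqn:pbr3} up to an error absorbable into the stated $O(\epsilon)$ tolerance, uniformly in the iteration index $j$ and across all bins $k$. Controlling this density/NUFFT discretization error together with the aliasing of the high-frequency tail (the part bounded by $\epsilon$ in Definition~\ref{def:m_IMF}) is where the hypotheses $N>\bar N_0$ and $\tfrac{N}{L}<h_0$ are genuinely consumed; once this is done, substituting the matched per-step estimates into the recursion of Algorithm~\ref{alg:MMD2} reproduces the RDBR error recursion and the conclusion follows from Theorem~\ref{thm:conv3}.
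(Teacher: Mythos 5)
Your overall architecture is the same as the paper's: apply the downsampling theorem (Theorem \ref{thm:al}) to convert the frequency subsampling $T_N$ in \eqref{eqn:s1}--\eqref{eqn:s2} into time-domain folding, recognize the folded average as a partition-based regression with step size $\tfrac{N}{L}$, and then rerun the RDBR argument of Theorem \ref{thm:conv3}, with Theorem \ref{thm:reg} giving the per-step $L^2$ risk $O\bigl(\tfrac{1}{N}+\tfrac{N^2}{L^2}\bigr)$ --- which is exactly where the hypotheses $N>\bar N_0$, $\tfrac{N}{L}<h_0$ and the class $\WS(M_0,M,N,K,\tfrac{N}{L},\beta,\gamma,\epsilon)$ are consumed. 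However, the step you correctly flag as hardest is left open, and the mechanism you propose for closing it does not work. A type-1 NUFFT computes the plain exponential sum over the nonuniform nodes $\psi_\ell=\phi(t_\ell)$; it applies no density compensation, so it cannot ``supply'' the bin-population denominator of \eqref{eqn:pbr3}. Moreover, with the actual nonuniform nodes the folded coordinates $\text{mod}(N\phi(t_\ell),1)$ do \emph{not} land in residue classes aligned with the aliasing fold, so your claimed exact matching of numerators already fails before the normalization question arises; the mismatch is precisely the NUFFT interpolation error.

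The paper resolves this with a different and simpler device that your proposal misses: it defines the regression samples on the \emph{uniform warped grid} $\psi_\ell=\phi(0)+\tfrac{(\phi(1)-\phi(0))\ell}{L}$, not on $\phi(t_\ell)$. On that grid the aliasing identity of Theorem \ref{thm:al} places every folded $x$-coordinate exactly on a partition grid point $t^{N/L}_{\text{mod}(\ell,L/N)}$, each of the $\tfrac{L}{N}$ bins receives exactly $N$ samples, and the prefactor $\tfrac{1}{N}$ from $\mathcal{F}^{-1}\circ\cD_{N,L}\circ\mathcal{F}=\tfrac1N\cA_{N,L}$ is then \emph{identically} the reciprocal bin count in \eqref{eqn:pbr5} --- no density weighting is needed at all. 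The entire discrepancy between the NUFFT evaluated at the true nodes $\phi(t_\ell)$ and the DFT on this uniform warped grid is absorbed into a single $O(\epsilon)$ term by invoking the NUFFT accuracy result \cite{Dutt}. Relatedly, your opening claim that RDSA and RDBR generate ``identical residuals'' so that Theorem \ref{thm:conv3} transfers ``verbatim'' is an overreach: the per-step equivalence holds only up to $O(\epsilon)$, and the two schemes use different step sizes ($h$ fixed versus $\tfrac{N}{L}$) and different sample sets. The paper accordingly does not conclude by identity; it verifies that each DSA step satisfies the same per-step regression risk bound $\lesssim\epsilon^2$ and then reruns the error recursion of Theorem $3.3$ in \cite{MMD} with $\bar N_0=\max\{N_0,\dot N_0\}$. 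With the uniform-warped-grid substitution and this weaker ``equal up to $O(\epsilon)$ per step'' formulation, your argument becomes the paper's proof.
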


\begin{proof}
In the first part of the proof, we show that \eqref{eqn:s1} and \eqref{eqn:s2} are equivalent to partition-based regression with a step size $\frac{N}{L}$ up to an approximation error due to the NUFFT. Since the approximation error of the NUFFT can be controled within arbitrary accuracy \cite{Dutt}, we assume that this approximation error is $O(\epsilon)$. 

For a MIMF $f(t)$ as defined in \eqref{eqn:m_IMF1}, let us define a uniform grid
\[
\left\{\psi_\ell=\phi(0)+\frac{(\phi(1)-\phi(0))\ell}{L}:0\leq \ell\leq L-1,\ell\in\mathbb{Z}\right\}.
\]
Define a vector $\overrightarrow{cf}\in\mathbb{R}^L$ associated with the function $f(t)$ such that the $\ell$-th entry is $\overrightarrow{cf}[\ell]=\cos(2\pi n\psi_\ell)(f\circ\phi^{-1}(\psi_\ell))$. Define a vector $\vec{s}_{cn}\in\mathbb{R}^L$ associated with the function $a_ns_{cn}(2\pi t)$ such that the $\ell$-th entry is $\vec{s}_{cn}[\ell]=a_ns_{cn}(2\pi t_\ell)$, where $t_\ell$ is from the uniform grid
\begin{equation*}
\left\{t_\ell=\frac{\ell}{L}:0\leq \ell\leq L-1,\ell\in\mathbb{Z}\right\}.
\end{equation*}

By Theorem \ref{thm:al}, we know \[\mathcal{F}^{-1}(\cD_{N,L}(\mathcal{F}(\overrightarrow{cf})))=\frac{1}{N}\cA_{N,L}(\overrightarrow{cf}).\]  By the definition of $\cD_{N,L}$ and $T_N$, and the fact that the right $\mathcal{F}$ in  \eqref{eqn:s1} is carried out via the NUFFT, we see  that \eqref{eqn:s1} is equivalent to
\[
\vec{s}_{cn}= O(\epsilon) +  \frac{2^{|\sgn(n)|}}{N}\cA_{N,L}(\overrightarrow{cf}),
\]
i.e.,
\begin{eqnarray*}
\vec{s}_{cn}[k]&=&  O(\epsilon)+ \frac{2^{|\sgn(n)|}}{N} \sum_{j=0}^{N-1}\overrightarrow{cf}[k+j\frac{L}{N}]\\
&=&  O(\epsilon)+ \frac{2^{|\sgn(n)|}}{N} \sum_{j=0}^{N-1} \cos(2\pi n\psi_{k+j\frac{L}{N}})\left(f\circ\phi^{-1}(\psi_{k+j\frac{L}{N}})\right).
\end{eqnarray*}
If we write the above equation in the terminology of partition-based regression, then the above equation (and hence \eqref{eqn:s1}) is equivalent to
\begin{equation}\label{eqn:pbr5}
a_n s_{cn}(2\pi x)=O(\epsilon)+ \frac{2^{|\sgn(n)|}\sum_{\ell=0}^{L-1}\mathcal{X}_{[t^{N/L}_k,t^{N/L}_{k+1})} (\frac{\text{mod}(N(\psi_\ell-\psi_0),\phi(1)-\phi(0))}{\phi(1)-\phi(0)}) \cos(2\pi n\psi_\ell)\left( f\circ\phi^{-1}(\psi_\ell) \right)}{\sum_{\ell=0}^{L-1}\mathcal{X}_{[t^{N/L}_k,t^{N/L}_{k+1})}(\frac{\text{mod}(N(\psi_\ell-\psi_0),\phi(1)-\phi(0))}{\phi(1)-\phi(0)}) }
\end{equation}
when $x\in [t^{N/L}_k,t^{N/L}_{k+1})$, where $t^{N/L}_k=k \frac{N}{L}$, for $k=0,\cdots,\frac{L}{N}-1$. In this special case of partition-based regression, the samples are \[\{\frac{\text{mod}(N(\psi_\ell-\psi_0),\phi(1)-\phi(0))}{\phi(1)-\phi(0)},\cos(2\pi n\psi_\ell)\left( f\circ\phi^{-1}(\psi_\ell)\right)\}_{\ell=0,\cdots,L-1},\]
where $\frac{\text{mod}(N(\psi_\ell-\psi_0),\phi(1)-\phi(0))}{\phi(1)-\phi(0)}  = t^{N/L}_{mod(\ell,L/N)}\in  [0,1]$ is always on the partition grid points (with a step size $\frac{N}{L}$) of the partition-based regression. In fact, these samples are uniformly distributed on the partition grid points and each grid point has $N$ samples.

Similarly, we see that \eqref{eqn:s2} is equivalent to 
\begin{equation}\label{eqn:pbr6}
b_n s_{sn}(2\pi x)=O(\epsilon)+ \frac{2^{|\sgn(n)|}\sum_{\ell=0}^{L-1}\mathcal{X}_{[t^{N/L}_k,t^{N/L}_{k+1})} (\frac{\text{mod}(N(\psi_\ell-\psi_0),\phi(1)-\phi(0))}{\phi(1)-\phi(0)}) \sin(2\pi n\psi_\ell) \left(f\circ\phi^{-1}(\psi_\ell) \right)}{\sum_{\ell=0}^{L-1}\mathcal{X}_{[t^{N/L}_k,t^{N/L}_{k+1})}(\frac{\text{mod}(N(\psi_\ell-\psi_0),\phi(1)-\phi(0))}{\phi(1)-\phi(0)}) },
\end{equation}
when $x\in [t^{N/L}_k,t^{N/L}_{k+1})$. \eqref{eqn:pbr6} is again a special case of partition-based regression with samples \[\{\frac{\text{mod}(N(\psi_\ell-\psi_0),\phi(1)-\phi(0))}{\phi(1)-\phi(0)},\sin(2\pi n\psi_\ell) \left(f\circ\phi^{-1}(\psi_\ell)\right)\}_{\ell=0,\cdots,L-1}.\] The step size of the sampling domain $[0,1]$ is $\frac{N}{L}$.

Recall that RDBR uses formulas \eqref{eqn:pbr3} and \eqref{eqn:pbr4} to estimate shape functions, and these formulas come from partition-based regression with sampling points \[\{\text{mod}(N\phi(t_\ell),1),\cos(2\pi n\phi(t_\ell)) f(t_\ell)\}_{0\leq \ell\leq L-1}\] and \[\{\text{mod}(N\phi(t_\ell),1),\sin(2\pi n\phi(t_\ell)) f(t_\ell)\}_{0\leq \ell\leq L-1},\] respectively. The step size of the sampling domain $[0,1]$ is a fixed parameter $h$. 

By Theorem \ref{thm:conv3}, we see that, if RDBR was used to estimate shape functions (i.e.,  formulas \eqref{eqn:pbr3} and \eqref{eqn:pbr4} were used), then for fixed $\epsilon$, $M_0$, $M$, $K$, and $C$, there exists $h_0(\epsilon,C)$ such that $\forall h<h_0(\epsilon,C)$, there exist $L_0(\epsilon,M_0,M,K,C,h)$ and $N_0(\epsilon,M_0,M,K,C,h)$ such that, when $L>L_0(\epsilon,M_0,M,K,C,h)$, $N>N_0(\epsilon,M_0,M,K,C,h)$, and $f(t)\in\WS(M_0,M,N,K,h,\beta,\gamma,\epsilon)$, we have  
 \[
  \|s^{E,(j)}_{cn,k}\|_{L^2}\leq O(c_0 \epsilon+( \beta  (2M_0+1)(K-1))^j ),
 \]
 and 
  \[
  \|s^{E,(j)}_{sn,k}\|_{L^2}\leq O(c_0 \epsilon+(\beta  (2M_0+1)(K-1))^j )
 \]
for all $j\geq 0$ and $1\leq k\leq K$, where $c_0=\frac{1}{1-\beta  (2M_0+1)(K-1)}$ is a constant number, $s^{E,(j)}_{cn,k}$ and $s^{E,(j)}_{sn,k}$ are defined just before this theorem.

Hence, in the second part of the proof of Theorem \ref{thm:conv4} for RDSA, we only need to clarify the conditions under which the estimations by \eqref{eqn:pbr3} and \eqref{eqn:pbr4} are almost the same as those by \eqref{eqn:pbr5} and \eqref{eqn:pbr6}, respectively, up to a small difference $O(\epsilon)$.

Under the conditions of Theorem \ref{thm:conv3}, as mentioned right after Theorem \ref{thm:conv3} in this paper, in each step of regression in \eqref{eqn:pbr3} and \eqref{eqn:pbr4}, the estimated regression function only differs to the ground truth regression function up to an $L^2$ error bounded by $\epsilon^2$. In more particular, Theorem \ref{thm:reg} gives the error bound as follows
\[
O\left(\frac{\sigma^2+\|s^R\|^2_{L^\infty}}{L\cdot h}+C^2 h^2\right),
\]
where $\sigma^2$ is the variation of noise perturbation (coming from the interference between different components) and $\sigma^2$ is bounded by a constant depending only on $M_0$, $M$ and $K$; $s^R$ denotes the ground truth regression function for the regression problem and it has an $L^\infty$-norm depending on $M_0$, $M$ and $K$ as well; $C$ is the Lipschitz continuity constant; $L$ is the number of samples; and $h$ is the step size of the partition-based regression. Hence,  there exists $h_0(\epsilon,C)$ such that $\forall h<h_0(\epsilon,C)$, there exists $L_0(\epsilon,M_0,M,K,C,h)$ such that, when $L>L_0(\epsilon,M_0,M,K,C,h)$ we have 
\[
O\left(\frac{\sigma^2+\|s^R\|^2_{L^\infty}}{L\cdot h}+C^2 h^2\right)\lesssim O(\epsilon^2).
\]

Similarly by Theorem \ref{thm:reg}, we see that the estimated regression function by \eqref{eqn:pbr5} and \eqref{eqn:pbr6} has an $L^2$ error bounded by
\[
O\left(\frac{\sigma^2+\|s^R\|^2_{L^\infty}}{L\cdot \frac{N}{L}}+C^2\frac{N^2}{L^2}\right)=O(\frac{1}{N}+\frac{N^2}{L^2}).
\]
Hence, there exists $\dot{N}_0(\epsilon,M_0,M,K,C)$ such that, when $L>L_0(\epsilon,M_0,M,K,C,h)$, $N>\dot{N}_0(\epsilon,M_0,M,K,C)$, $\frac{N}{L}<h_0(\epsilon,C)$, we have
\[
O\left(\frac{\sigma^2+\|s^R\|^2_{L^\infty}}{L\cdot \frac{N}{L}}+C^2\frac{N^2}{L^2}\right)\lesssim O(\epsilon^2).
\]

Hence, following the proof of Theorem $3.3$ in \cite{MMD}, we can prove that for fixed $\epsilon$, $M_0$, $M$, $K$, and $C$, there exists $h_0(\epsilon,C)$ such that $\forall h<h_0(\epsilon,C)$, there exist $L_0(\epsilon,M_0,M,K,C,h)$ and \[\bar{N}_0(\epsilon,M_0,M,K,C,h):=\max\{N_0(\epsilon,M_0,M,K,C,h),\dot{N}_0(\epsilon,M_0,M,K,C)\}\] such that, when $L>L_0(\epsilon,M_0,M,K,C,h)$, $N>\bar{N}_0(\epsilon,M_0,M,K,C,h)$, $\frac{N}{L}<h_0(\epsilon,C)$, and $f(t)\in\WS(M_0,M,N,K,\frac{N}{L},\beta,\gamma,\epsilon)$, we have  
 \[
  \|s^{E,(j)}_{cn,k}\|_{L^2}\leq O(c_0 \epsilon+( \beta  (2M_0+1)(K-1))^j ),
 \]
 and 
  \[
  \|s^{E,(j)}_{sn,k}\|_{L^2}\leq O(c_0 \epsilon+(\beta  (2M_0+1)(K-1))^j )
 \]
for all $j\geq 0$ and $1\leq k\leq K$, where $c_0=\frac{1}{1-\beta  (2M_0+1)(K-1)}$ is a constant number, $s^{E,(j)}_{cn,k}$ and $s^{E,(j)}_{sn,k}$ are defined just before this theorem.

The reason for requiring \[f(t)\in\WS(M_0,M,N,K,\frac{N}{L},\beta,\gamma,\epsilon),\] instead of \[f(t)\in\WS(M_0,M,N,K,h,\beta,\gamma,\epsilon),\] is that the partition-based regression in \eqref{eqn:pbr5} and \eqref{eqn:pbr6} has a step size $\frac{N}{L}$ instead of $h$.

\end{proof}

\section{Numerical Examples\label{sec:NumEx}}

In this section, some numerical examples of synthetic and real data are provided to demonstrate the proposed properties of RDSA. In all synthetic examples, we assume the instantaneous phases and amplitudes are known and only focus on verifying the RDSA in Section \ref{sec:GMD} and its convergence theory in Section \ref{sec:cov}. In real examples, we apply the one-dimensional highly redundant synchrosqueezed wave packet transform (SSWPT) \cite{1DSSWPT,Robustness} to estimate instantaneous phases and amplitudes as inputs of RDSA.  The implementation of SSWPT is publicly available in SynLab\footnote{Available at https://github.com/HaizhaoYang/SynLab.}. Some more packages for estimating instantaneous frequencies can be found in \cite{8081384}. The code for the RDSA is available online as well in a MATLAB package named DeCom\footnote{Available at https://github.com/HaizhaoYang/DeCom.}.

Let us summarize the main parameters in the above packages and in Algorithm \ref{alg:MMD2}. In SynLab, main parameters are
\begin{itemize}
\item $s$: a geometric scaling parameter;
\item $rad$: the support size of the mother wave packet in the Fourier domain;
\item $red$: a redundancy parameter, the number of frames in the wave packet transform;
\item $\epsilon_{sst}$: a threshold for the wave packet coefficients.
\end{itemize}
In Algorithm \ref{alg:MMD2}, main parameters are
\begin{itemize}
\item $J_1$: the maximum number of iterations allowed in Algorithm \ref{alg:MMD1};
\item $J_2$: the maximum number of iterations allowed in Algorithm \ref{alg:MMD2};
\item $M_0$ and $L_s$: bandwidth parameters;
\item $\epsilon_1=\epsilon_2=\epsilon$: the accuracy parameter.
\end{itemize}
For the purpose of convenience, the synthetic data is defined on $[0,1]$ and sampled on a uniform grid. All these parameters in different examples are summarized in Table \ref{tab:1}.

\begin{table}[htp]
\centering
\begin{tabular}{rcccccccccc}
\toprule
  figure  & $s$
                             & $rad$ & $red$ & $\epsilon_{sst}$ & $J_1$ & $J_2$ & $M_0$ & $\epsilon$ & $L_s$ & $L$ \\
\toprule
 3   & -- & -- & -- & -- & 10 & 200 & 20 & 1e-13 & 2000 & -- \\
4, 5, 6   & 0.5 & 1.5 & 8 & 1e-3 & 10 & 200 & -- & 1e-6 & 5000 & $2^{16}$ \\
7, 8, 9    & 0.5 & 1 & 8 & 1e-3 & 10 & 200 & -- & 1e-6 & 5000 & $4000$ \\
10, 11   & -- & -- & -- & -- & 10 & 200 & 10 & 1e-6 & 2000 & $2^{15}$ \\
12, 13, 14   & 0.5 & 1.5 & 8 & 1e-3 & 10 & 200 & 40 & 1e-6 & 1000 & $2^{16}$ \\
\bottomrule
\end{tabular}
\caption{Parameters in SynLab and Algorithm \ref{alg:MMD2}. The notation ``--" means the corresponding parameter is not used or will be specified later in the example.}
\label{tab:1}
\end{table}

\subsection{Convergence of RDSA}

In this section, we provide numerical examples to verify the convergence theory of RDSA in Section \ref{sec:cov}. For a fixed accuracy parameter $\epsilon$, Theorem \ref{thm:conv4} shows that as long as instantaneous frequencies are sufficiently high and the number of samples is large enough, RDSA is able to estimate shape functions from a class of superpositions of MIMF's. The residual error in the iterative scheme linearly converges to a quantity of order $\epsilon$. Since it is difficult to specify the relation of the rate of convergence and other parameters explicitly in the analysis, we provide numerical examples to study this rate quantitatively.

\begin{figure}[ht!]
  \begin{center}
    \begin{tabular}{cccc}
    \includegraphics[height=1.3in]{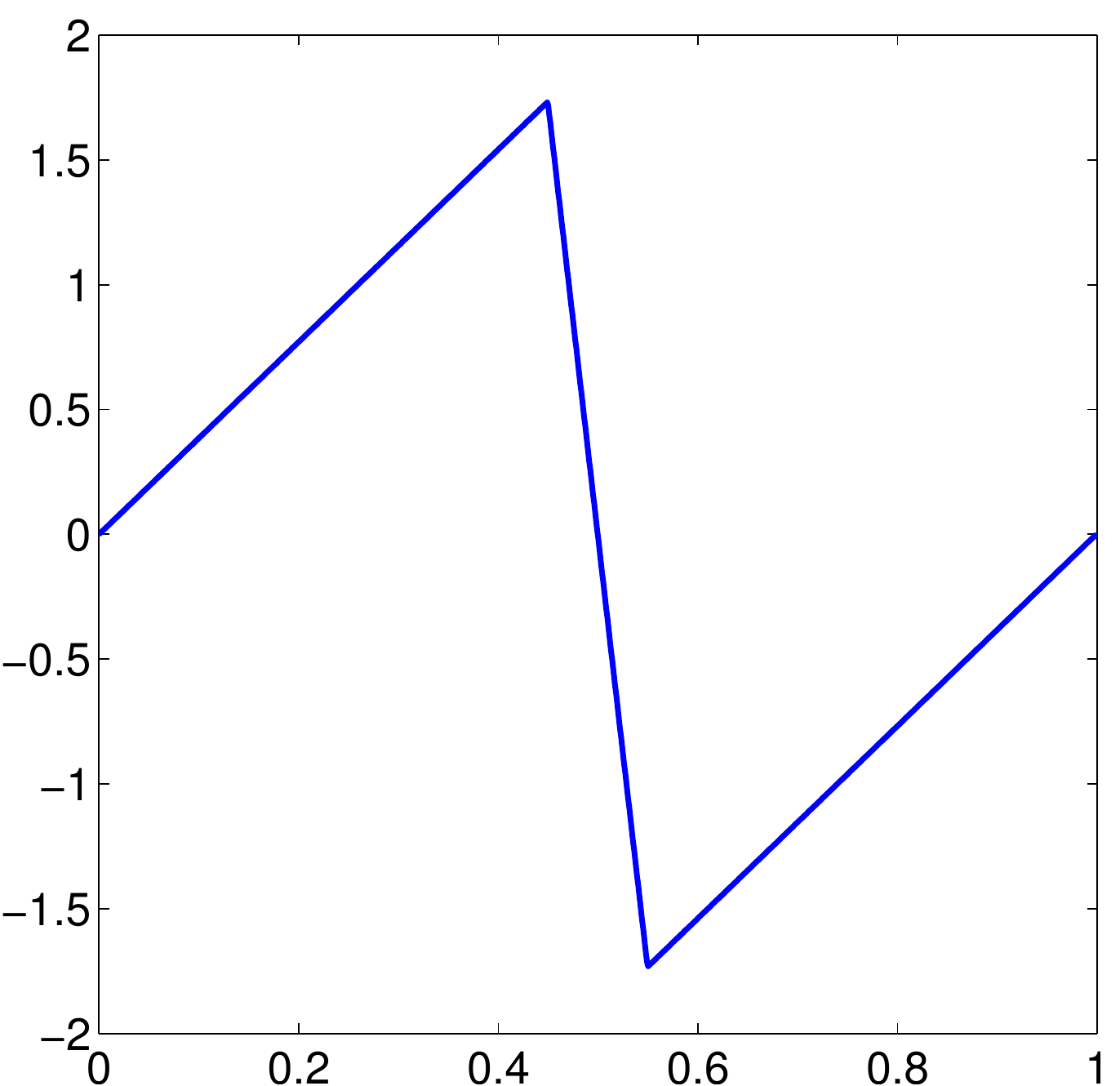}  & \includegraphics[height=1.3in]{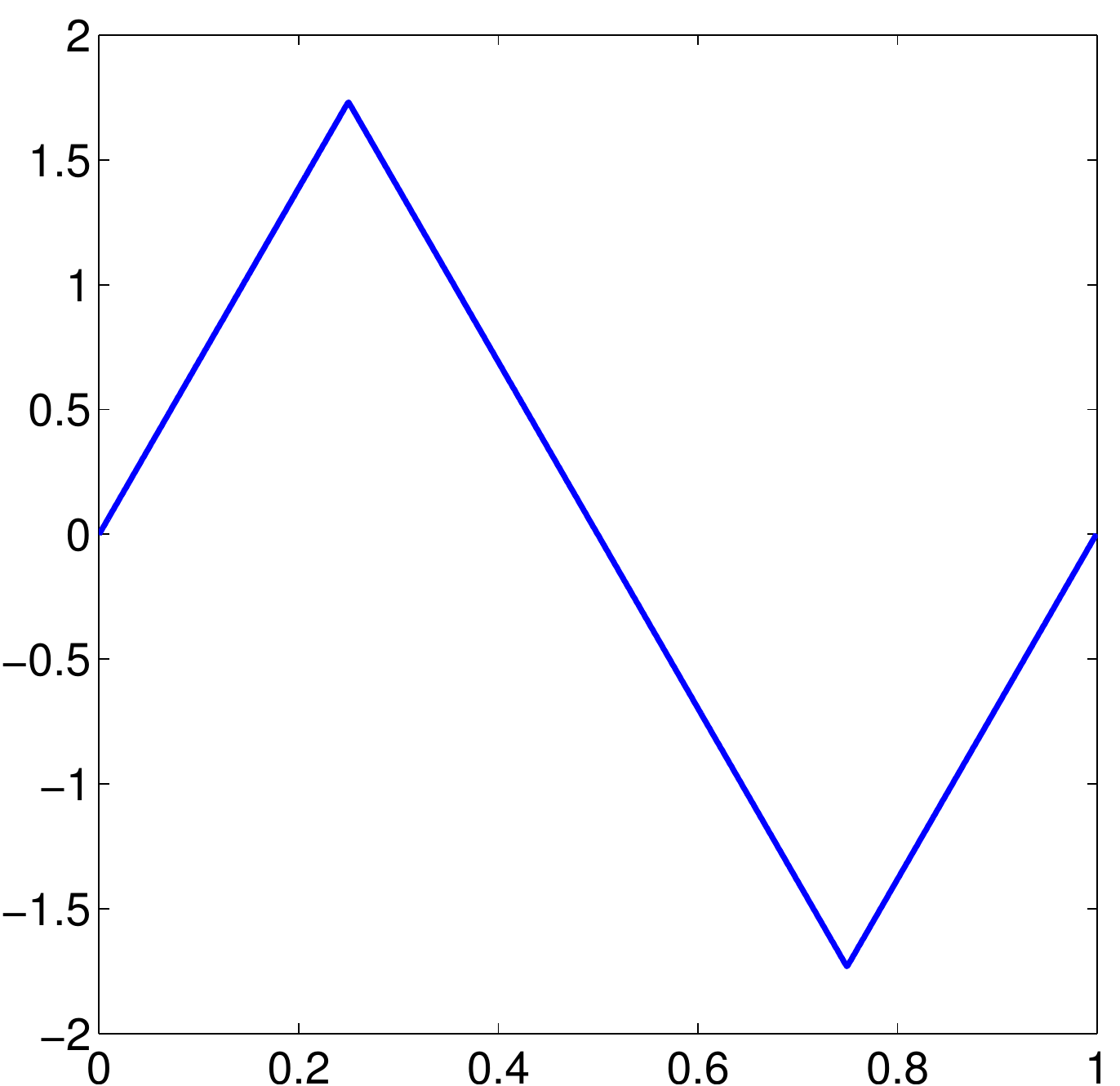}  &   \includegraphics[height=1.3in]{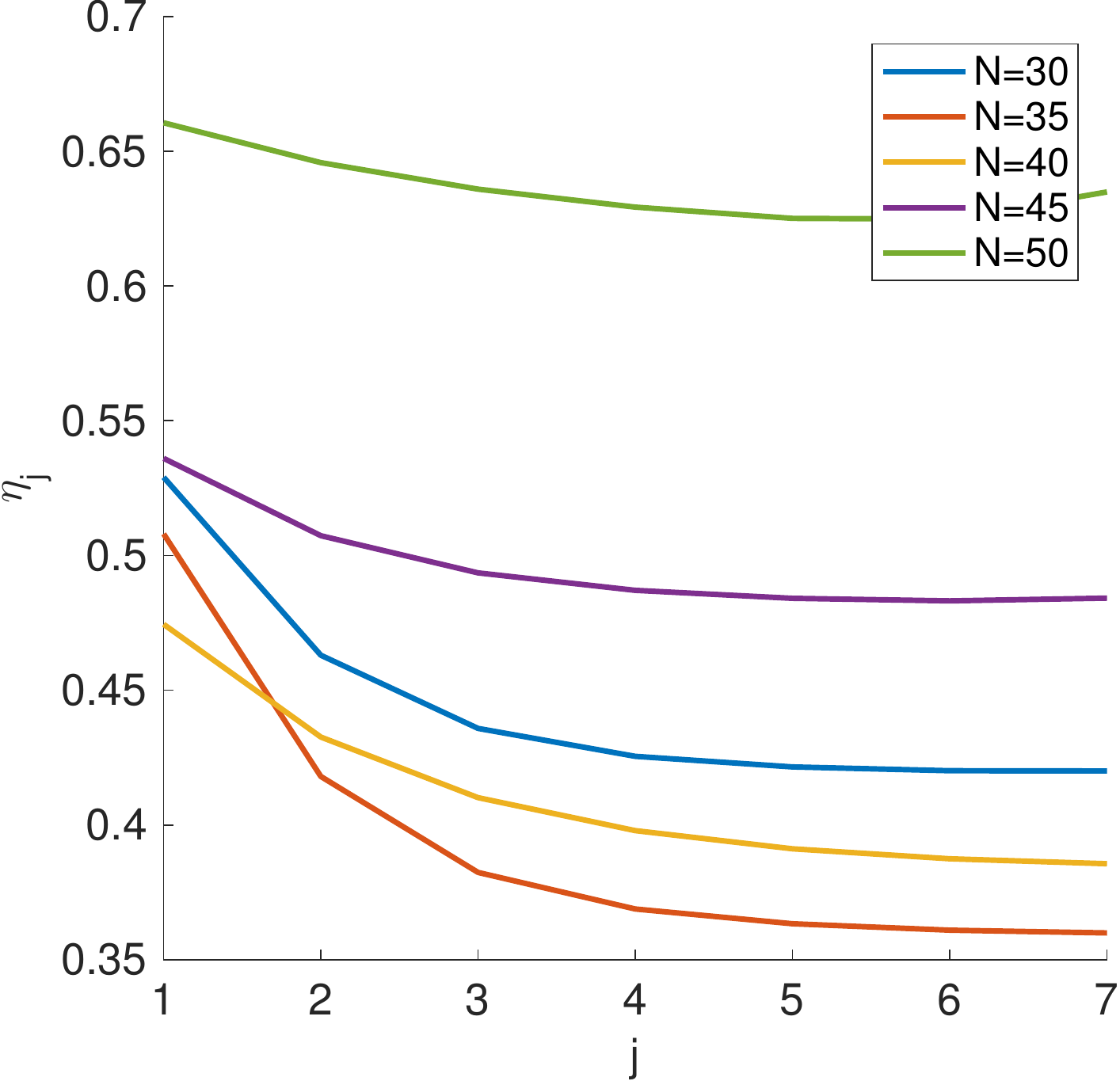}  &
   \includegraphics[height=1.3in]{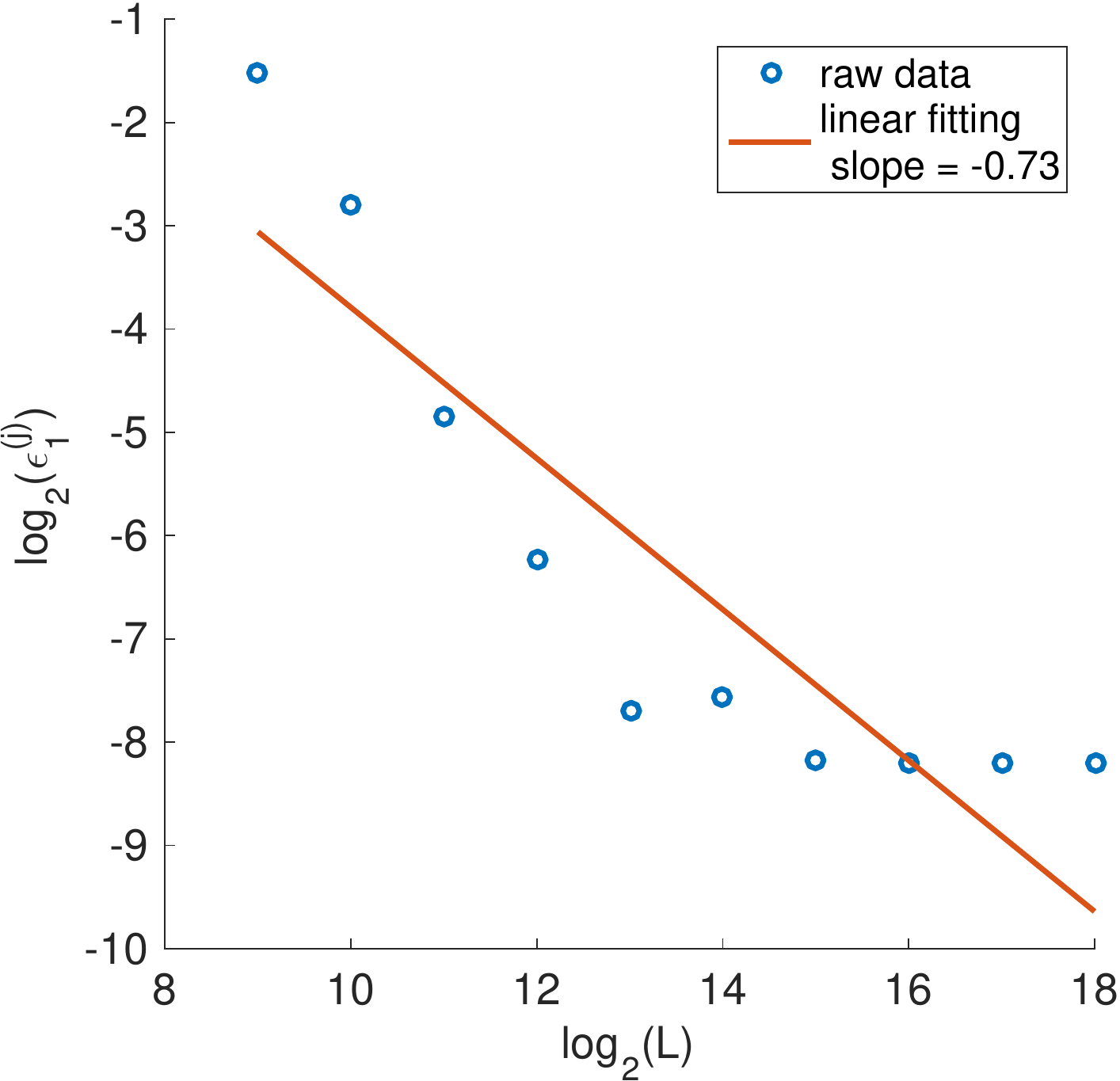}  \\
   (a) & (b) & (c) & (d)
    \end{tabular}
  \end{center}
  \caption{(a) Shape function $s_1$ in \eqref{eqn:ex2}. (b) Shape function $s_2$ in \eqref{eqn:ex2}. (c) Estimated convergence rates $\beta$ in different iteration steps when different values are assigned to $N$ in \eqref{eqn:ex2}. (d) The relation of the final residual norm $\epsilon^{(j)}$ (after the RDSA has been terminated) and the number of samples $L$.} 
\label{fig:3}
\end{figure}

In all examples in this section, we consider a simple case when the signal has two components with piecewise linear and continuous shapes. This makes it easier to verify the convergence analysis. For example, we consider a signal of the form
\begin{equation}
\label{eqn:ex2}
f(t) = f_1(t)+f_2(t),
\end{equation}
where
\[
f_1(t) = s_1(2\pi N\phi_1(t))= s_1\left(2\pi N(t+0.006\sin(2\pi t))\right)
\] 
and
\[
f_2(t) =s_2(2\pi N\phi_2(t))= s_2\left(2\pi N(t+0.006\cos(2\pi t))\right),
\] 
$s_1(2\pi t)$ and $s_2(2\pi t)$ are shape functions defined on $[0,1]$ as shown in Figure \ref{fig:3} (left). Here $f_1$ and $f_2$ can be considered as two IMFs as well as two MIMFs by definition.

First, we fix the number of samples $L=2^{19}$, vary the parameter $N$ in \eqref{eqn:ex2}, and estimate the convergence rate numerically. By Theorem \ref{thm:conv4} (adapted to the example in this section), the residual norm $\epsilon_1$ in Algorithm \ref{alg:MMD2} converges to $O(\epsilon)$ as follows
\[
\epsilon_1^{(j)}=O(\epsilon)+\beta^jO(1).
\]
Hence, if we define a sequence $\{\mu_j\}$ by
\[
\mu_{j} =\log( |\epsilon_1^{(j-1)}-\epsilon_1^{(j)}|).
\]
and a sequence $\{\eta_j\}$ by
\[
\eta_j = \mu_j-\mu_{j+1},
\]
then $\eta_j$ approximately quantifies the convergence in the $j$th iteration, and should be nearly a constant close to $-\log(\beta)$. Figure \ref{fig:3} (c) visualizes the sequences $\{\eta_j\}$ generated from different signals with various $N$'s. It shows that when $N$ is sufficiently large, $\{\eta_j\}$ are approximately a constant for all $j$ and hence the convergence is linear; when $N$ is small, RDSA converges sublinearly since $\eta_j>0$ for all $j$ and $\{\eta_j\}$ decays as $j$ becomes large. After a few iterations, the residual error has been small enough. Hence, we do not show the results when the iteration number is larger than $7$.

Second, we fixed $N=100$, vary the number of samples $L=2^m$ with $m=9,10,\dots,18$, and show the accuracy of RDSA after it converges. To obtain results with an accuracy as high as possible, we let $J_2=200$ and $\epsilon=1e-13$. Figure \ref{fig:3} (d) shows that the final residual norm $\epsilon_1$ after RDSA converged essentially decays in $L$.

\subsection{The speedup of RDSA against RDBR}

In this section, we compare the computational efficiency of RDSA proposed in this paper and RDBR in \cite{MMD}. In this comparison, we still adopt the simple example in \eqref{eqn:ex2} and only compare the computational time of one iteration in RDSA and RDBR, i.e., the time (denoted as $t_{RDBR}$) for performing Algorithm $2$ in \cite{MMD} with $J=1$ and the time (denoted as $t_{RDSA}$) for performing Algorithm \ref{alg:DSA} in this paper for computing only one shape function. The speedup of RDSA against RDBR, i.e., $t_{RDBR}/t_{RDSA}$, is shown in Table \ref{tab:2} for various $L$'s and $N$'s. Since the main computational cost for RDSA is the NUFFT, which has a computational complexity $O(L\log L)$ for a problem of size $L$, the RDSA is highly efficient. The speedup of RDSA against RDBR is much more prominent as $L$ increases. Hence, RDSA is a more practical algorithm for MMD than RDBR when the problem size is large. 

\begin{table}[htp]
\centering
\begin{tabular}{rcccccccccc}
  \toprule
N$\setminus$L  & $2^{10}$  & $2^{11}$  & $2^{12}$  & $2^{13}$  & $2^{14}$  & $2^{15}$  & $2^{16}$  & $2^{17}$  & $2^{18}$  & $2^{19}$   \\
\toprule
50 & 59 & 154 & 287 & 713 & 217 & 900 & 946 & 1246 & 1396 & 1278  \\ 
70 & 525 & 682 & 723 & 1364 & 771 & 916 & 1136 & 1293 & 1645 & 1650  \\ 
90 & 482 & 436 & 593 & 645 & 1184 & 611 & 945 & 1451 & 1501 & 2086  \\ 
110 & 382 & 505 & 685 & 700 & 1056 & 1026 & 966 & 1253 & 1416 & 1298  \\ 
 
\bottomrule
\end{tabular}
\caption{The speedup of RDSA against RDBR, i.e., $t_{RDBR}/t_{RDSA}$, for estimating shape functions in one iteration of the recursive scheme. The above table shows the speedup for various numbers of samples $L$ and essential frequencies $N$.}
\label{tab:2}
\end{table}

%

\subsection{Analysis of MIMF's in real data}

In this section, we apply RDSA to analyze MIMF's in real applications. We adopt the same numerical examples in \cite{MMD} to compare the performance of RDSA and RDBR. To save space, only the results of RDSA will be provided. The reader is referred to \cite{MMD} for the results of RDBR as comparison. The first example is an ECG record from a normal subject and the second example is a motion-contaminated ECG record. More details about the ECG data can be found in \url{https://www.physionet.org/physiobank/database/}. We compute the band-limited multiresolution approximations of the first example and visualize them in Figure \ref{fig:10_1}, \ref{fig:10_2}, and \ref{fig:10_3}; the band-limited multiresolution approximations of the second example are plotted in Figure \ref{fig:11_1}, \ref{fig:11_2}, and \ref{fig:11_3}. Note that when the bandwidth of the multiresolution approximation increases, the approximation error decreases, and finer variation of the time series can be captured. Figure \ref{fig:10_3} and \ref{fig:11_3} show the first five shape functions of these two examples, respectively; all shape functions vary a lot at different level of resolution. The actual time-varying shape of an ECG signal we see in the raw data is not exactly any single shape function in the shape function series; they are actually the results of all shape functions in the shape function series. The results by RDSA validate the MIMF model again. Compared to RDBR, the residual $f(t)-\mathcal{M}_{40}(f)(t)$ by RDSA in Figure \ref{fig:10_2} and \ref{fig:11_2} is smaller, which implies that RDSA is better to handle fine details of the signal than RDBR. 

\begin{figure}[ht!]
  \begin{center}
    \begin{tabular}{c}
      \includegraphics[width=6in]{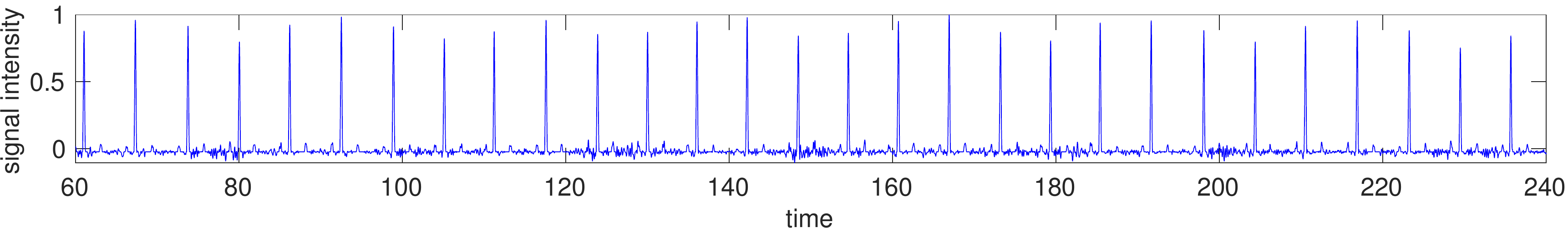}  \\
      An ECG record from a normal subject $f(t)$\\
      \includegraphics[width=6in]{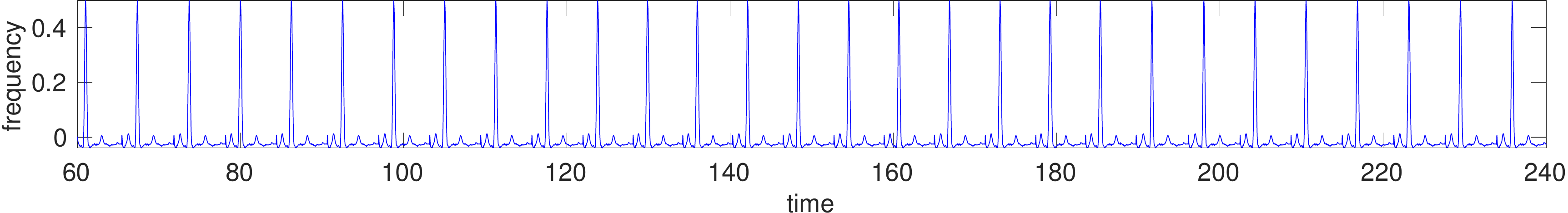}  \\
     $0$-banded multiresolution approximation $\mathcal{M}_0(f)(t)$\\
      \includegraphics[width=6in]{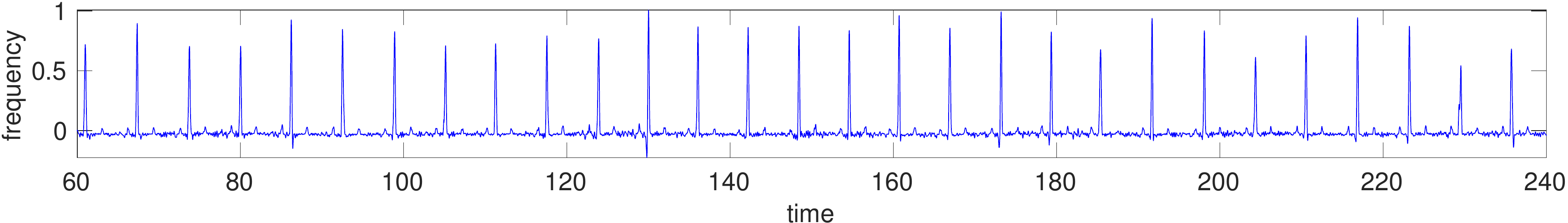}  \\
     $20$-banded multiresolution approximation $\mathcal{M}_{20}(f)(t)$\\
      \includegraphics[width=6in]{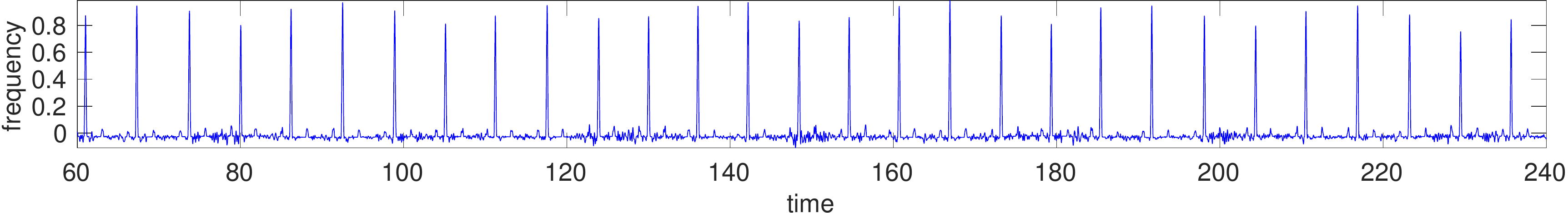}  \\
      $40$-banded multiresolution approximation $\mathcal{M}_{40}(f)(t)$
    \end{tabular}
  \end{center}
  \caption{Multiresolution approximations of an ECG record from a normal subject.}
\label{fig:10_1}
\end{figure}

\begin{figure}[ht!]
  \begin{center}
    \begin{tabular}{c}
      \includegraphics[width=6in]{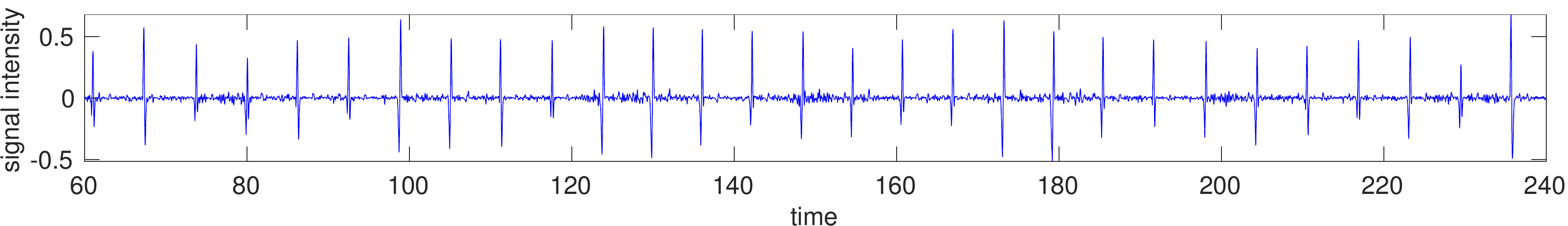}  \\
      $f(t)-\mathcal{M}_0(f)(t)$ \\
      \includegraphics[width=6in]{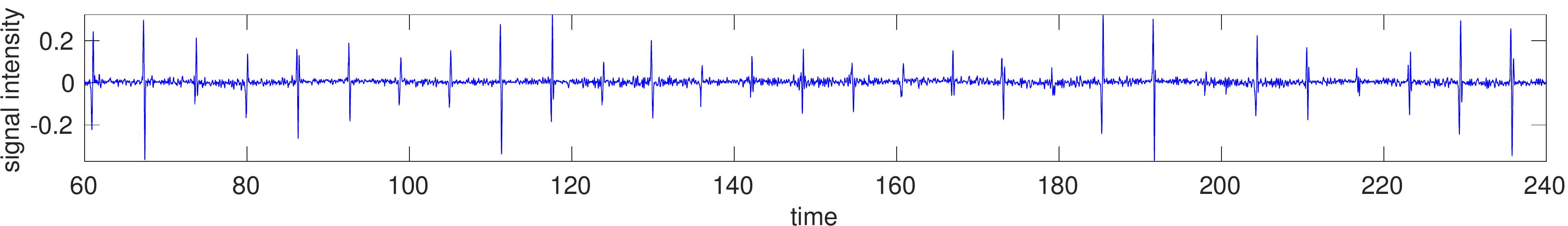}  \\
      $f(t)-\mathcal{M}_{20}(f)(t)$ \\
      \includegraphics[width=6in]{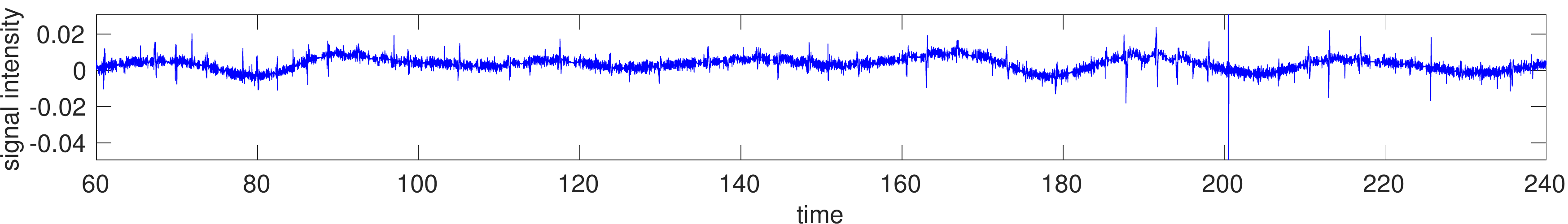}  \\
      $f(t)-\mathcal{M}_{40}(f)(t)$ 
    \end{tabular}
  \end{center}
  \caption{The residual of the multiresolution approximations of an ECG record from a normal subject in Figure \ref{fig:10_1}.}
\label{fig:10_2}
\end{figure}

\begin{figure}[ht!]
  \begin{center}
    \begin{tabular}{ccccc}
      \includegraphics[width=1.1in]{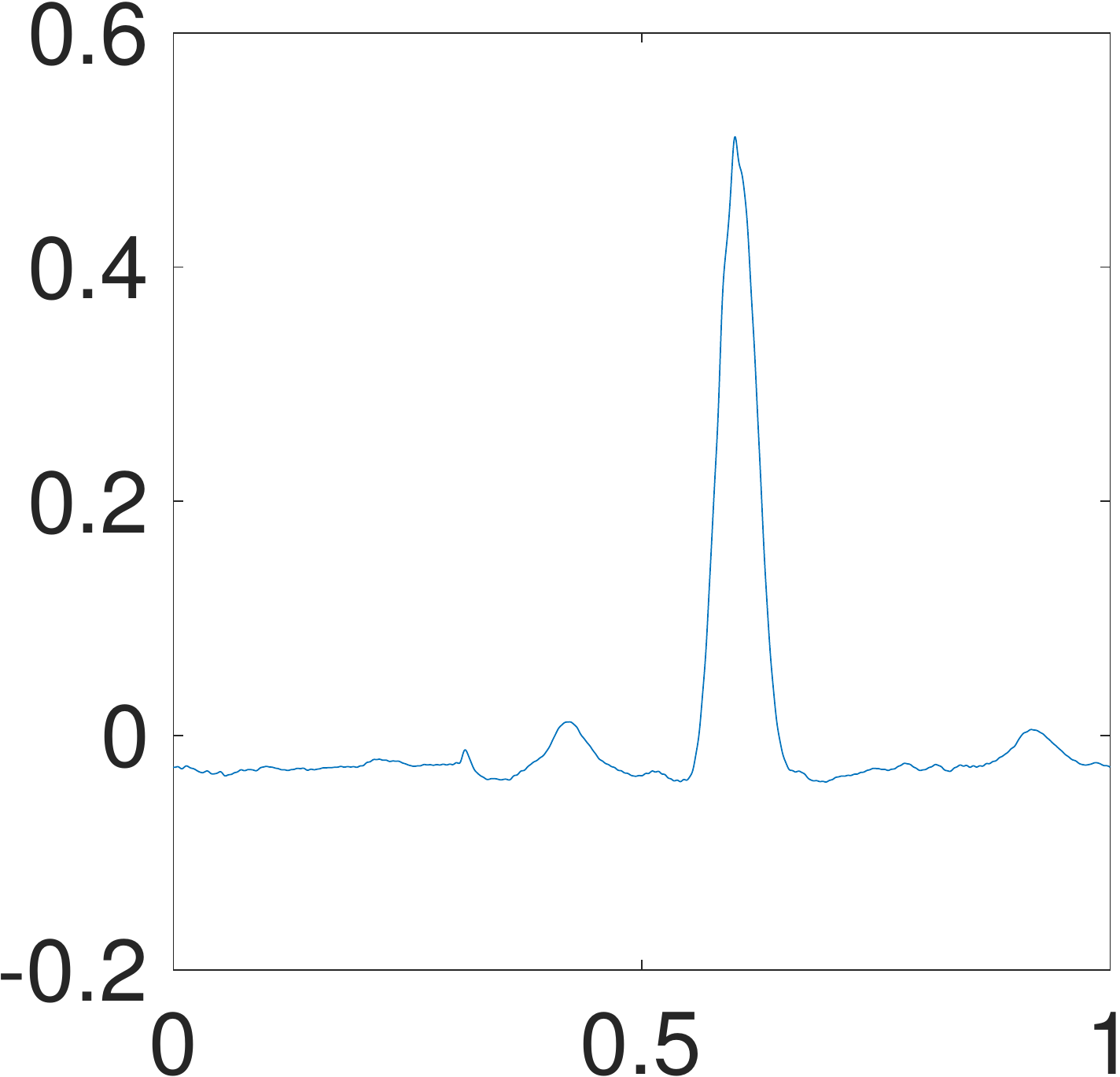}   &
      \includegraphics[width=1.1in]{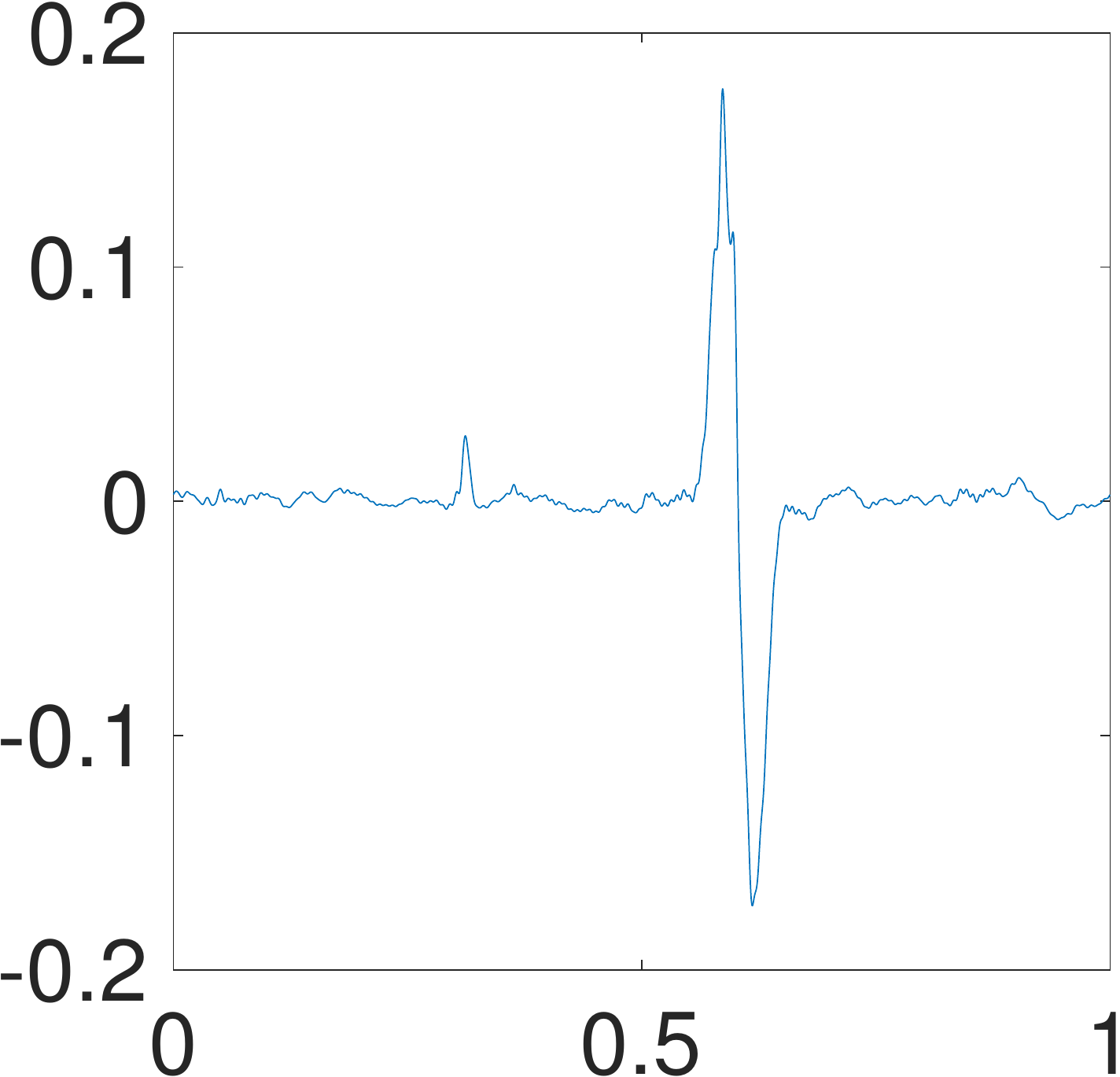}   &
      \includegraphics[width=1.05in]{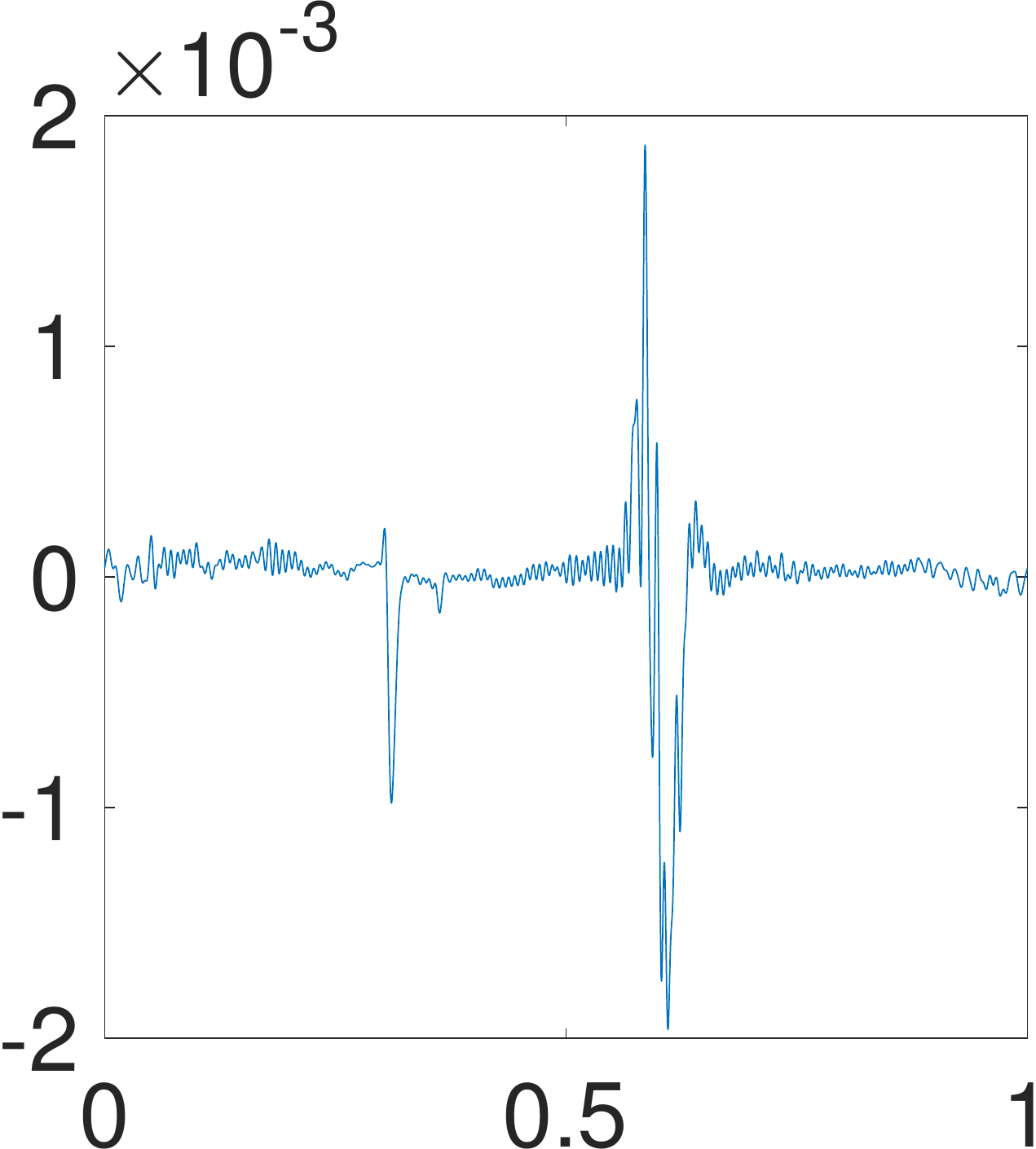}   &
      \includegraphics[width=1.1in]{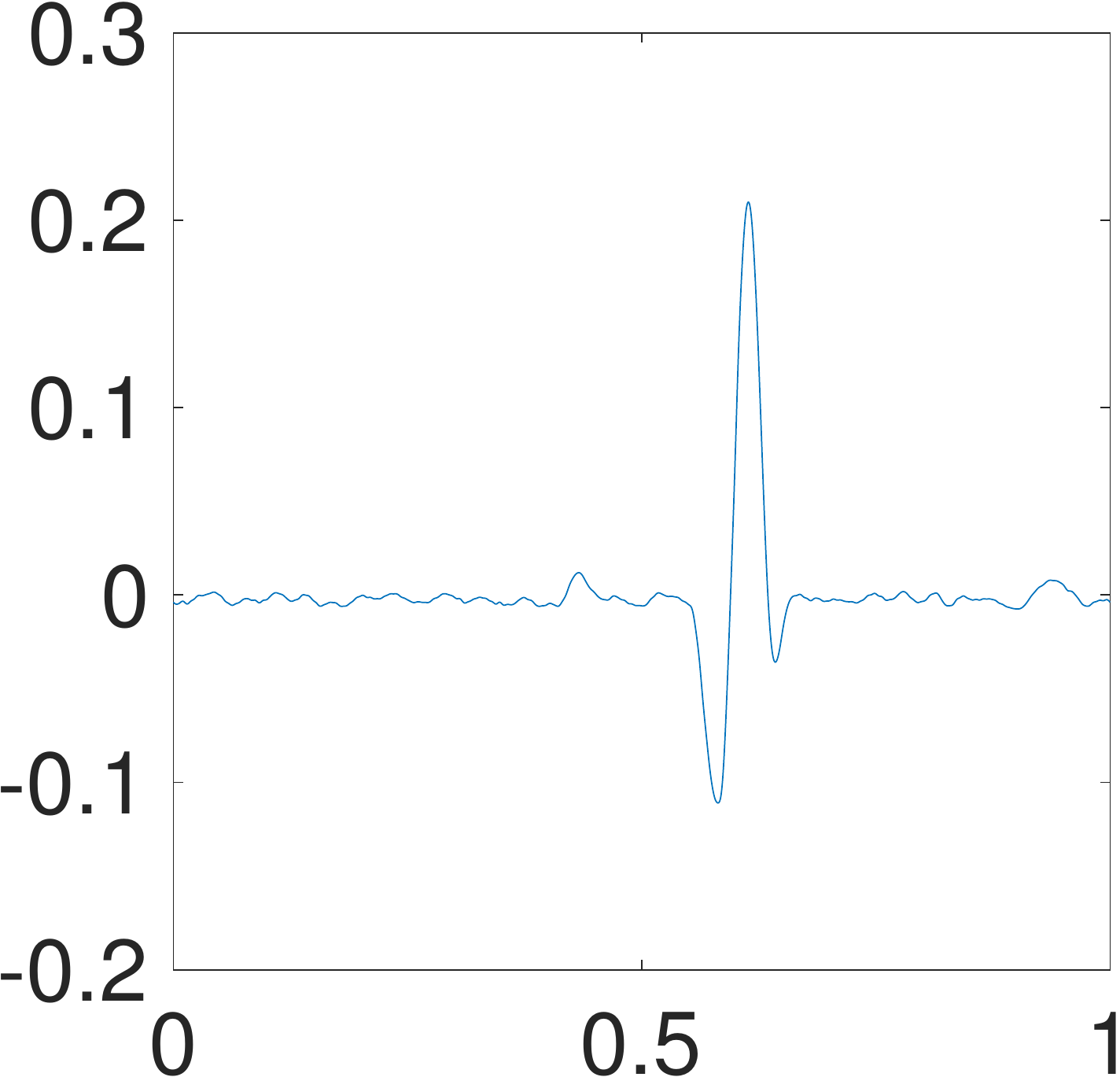}   &
      \includegraphics[width=1.1in]{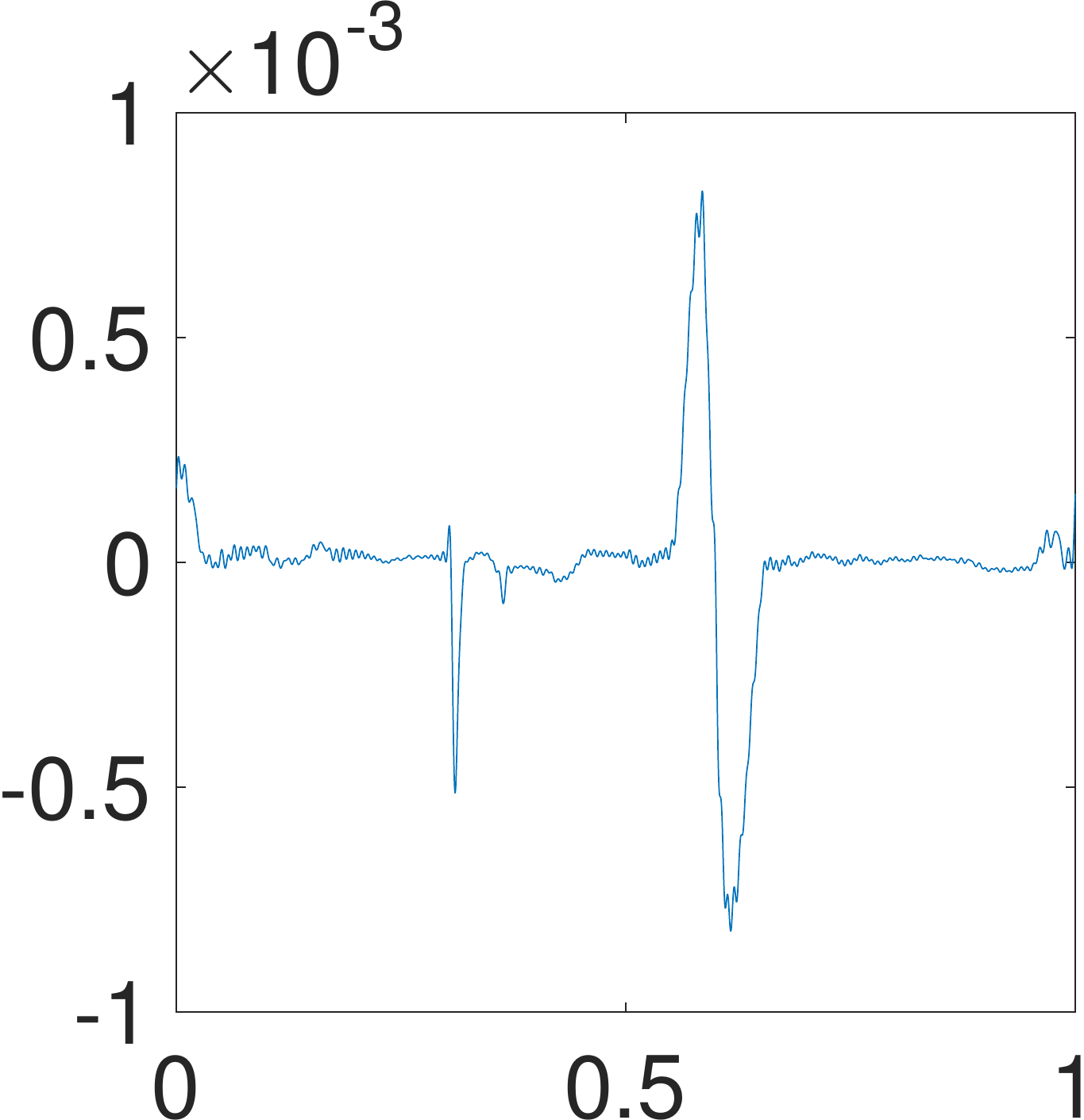}    
    \end{tabular}
  \end{center}
  \caption{Estimated shape functions $a_0 s_{c0}(t)$, $a_1 s_{c1}(t)$, $a_{-1} s_{c-1}(t)$, $b_1 s_{s1}(t)$, and $b_{-1} s_{s-1}(t)$ for the ECG signal in Figure \ref{fig:10_1}.}
\label{fig:10_3}
\end{figure}

\begin{figure}[ht!]
  \begin{center}
    \begin{tabular}{c}
      \includegraphics[width=6in]{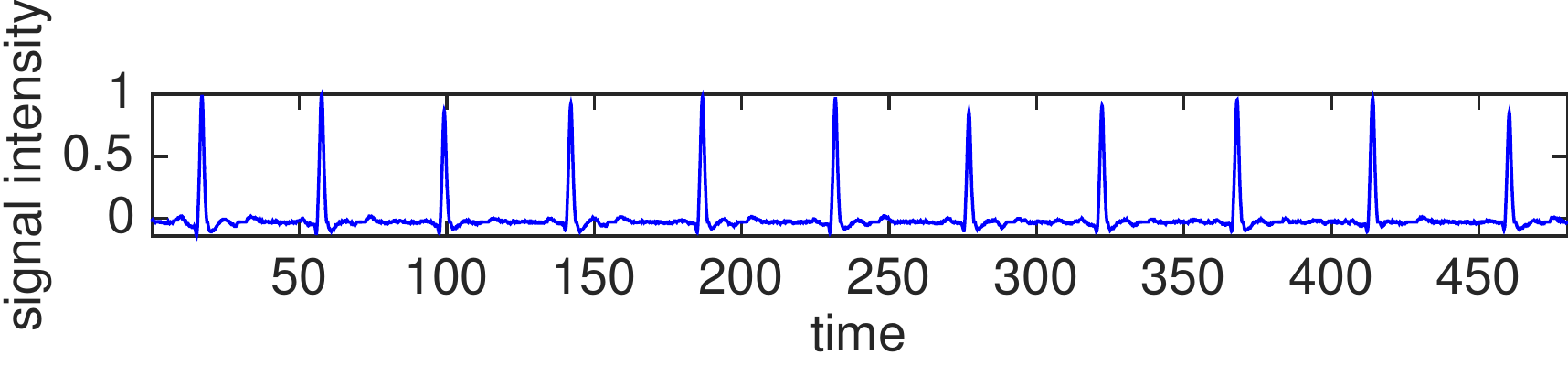}  \\
      A motion-contaminated ECG record $f(t)$\\
     \includegraphics[width=6in]{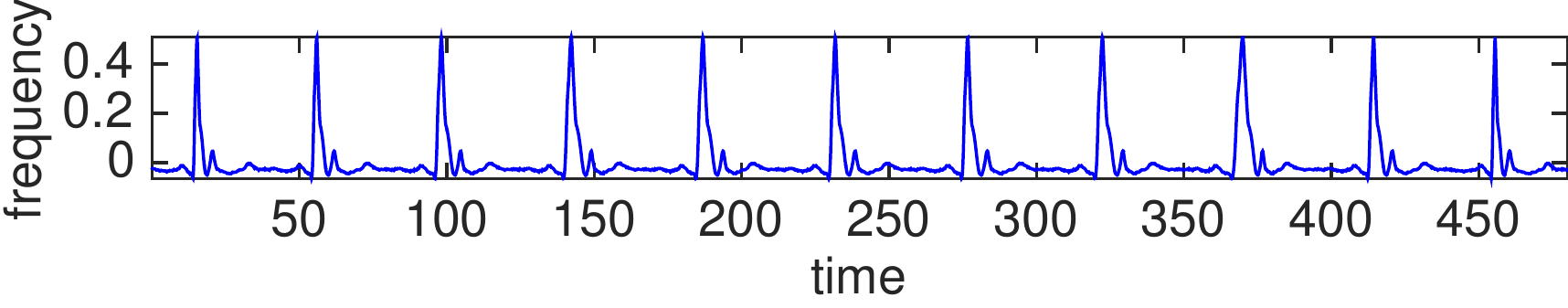}  \\
     $0$-banded multiresolution approximation $\mathcal{M}_0(f)(t)$\\
      \includegraphics[width=6in]{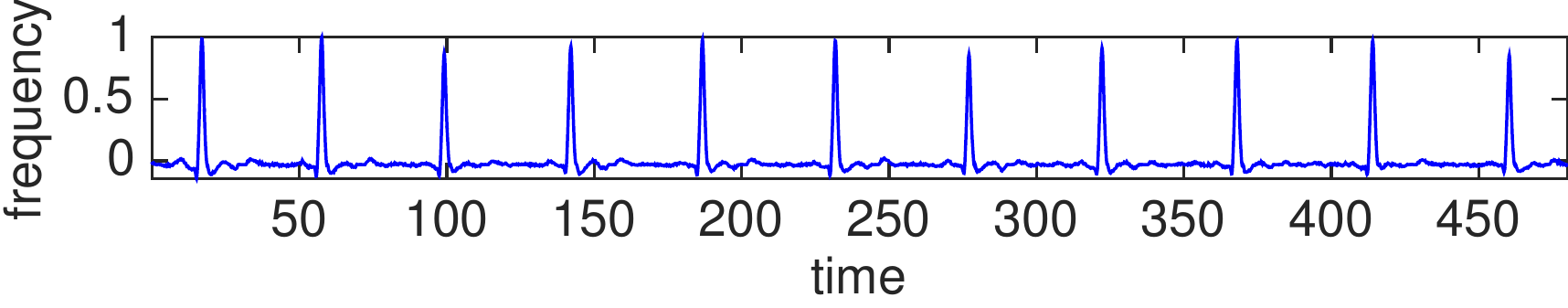}  \\
     $20$-banded multiresolution approximation $\mathcal{M}_{20}(f)(t)$\\
      \includegraphics[width=6in]{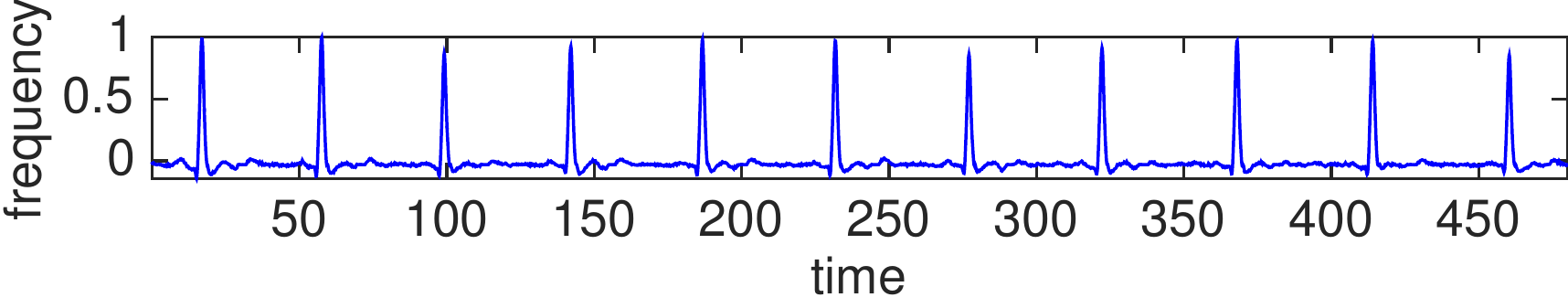}  \\
     $40$-banded multiresolution approximation $\mathcal{M}_{40}(f)(t)$
    \end{tabular}
  \end{center}
  \caption{Multiresolution approximations of a motion-contaminated ECG record.}
\label{fig:11_1}
\end{figure}

\begin{figure}[ht!]
  \begin{center}
    \begin{tabular}{c}
      \includegraphics[width=6in]{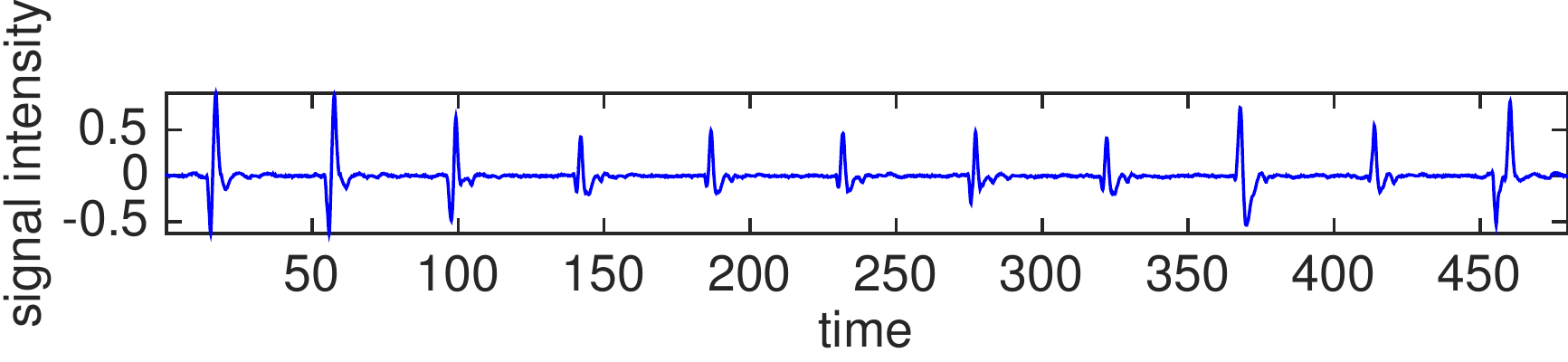}  \\
      $f(t)-\mathcal{M}_{0}(f)(t)$ \\
      \includegraphics[width=6in]{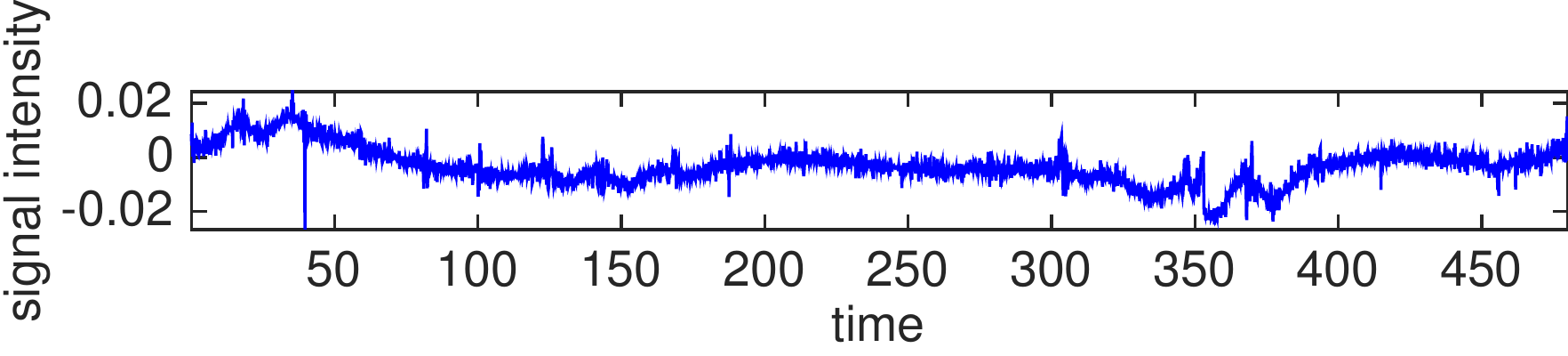}  \\
      $f(t)-\mathcal{M}_{5}(f)(t)$ \\
      \includegraphics[width=6in]{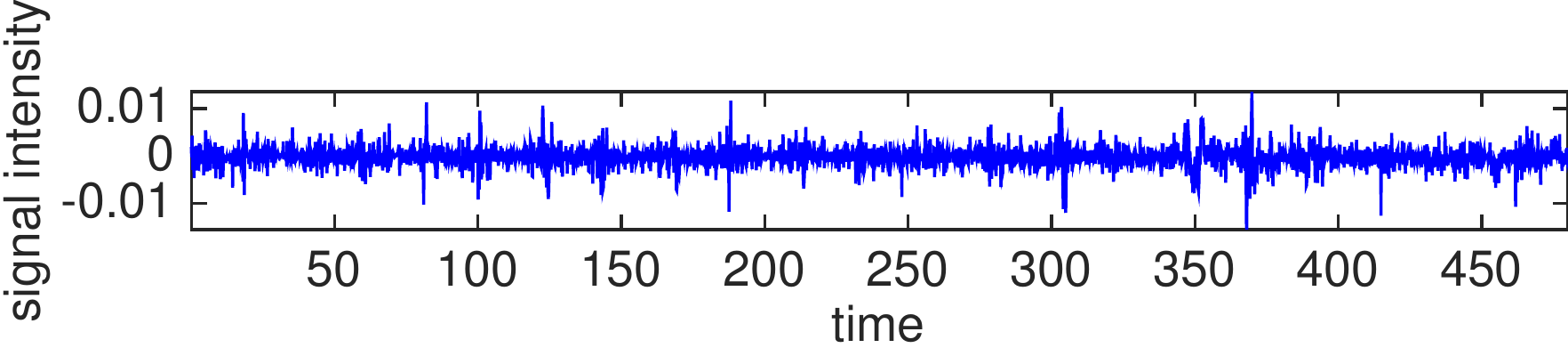}  \\
      $f(t)-\mathcal{M}_{20}(f)(t)$ \\
      \includegraphics[width=6in]{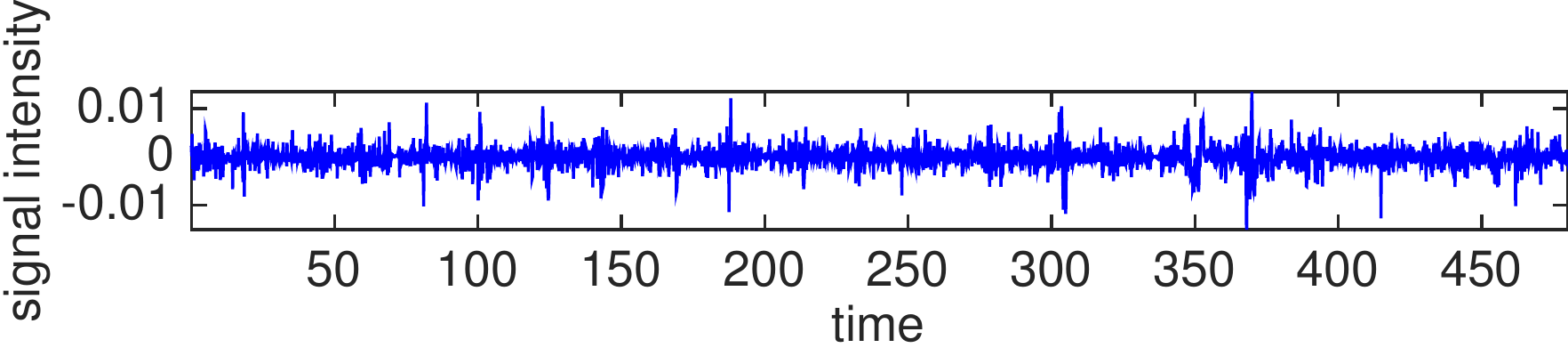}  \\
      $f(t)-\mathcal{M}_{40}(f)(t)$ 
    \end{tabular}
  \end{center}
  \caption{The residual of the multiresolution approximations of a motion-contaminated ECG record in Figure \ref{fig:11_1}.}
\label{fig:11_2}
\end{figure}

\begin{figure}[ht!]
  \begin{center}
    \begin{tabular}{ccccc}
      \includegraphics[width=1.1in]{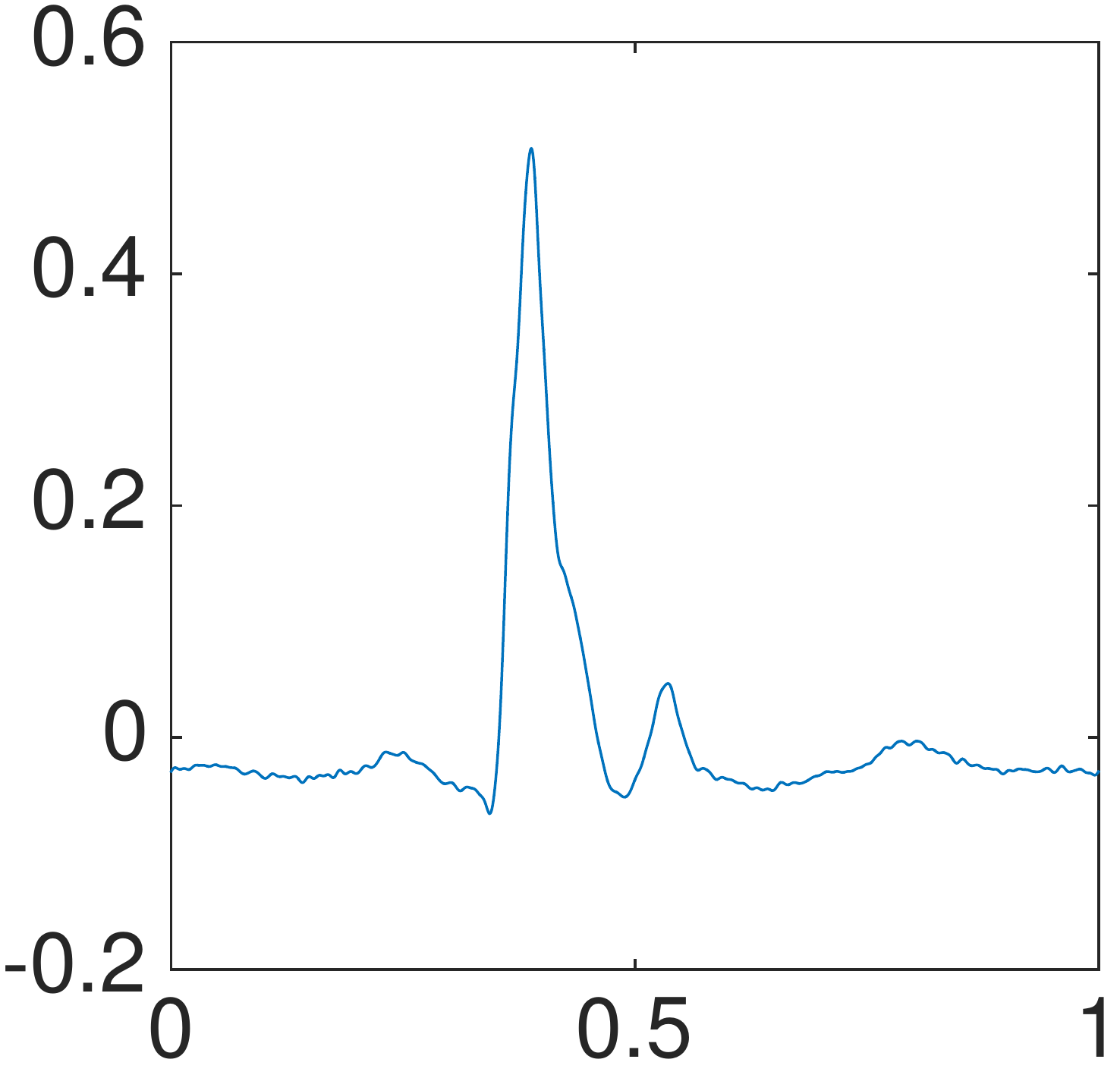}   &
      \includegraphics[width=1.1in]{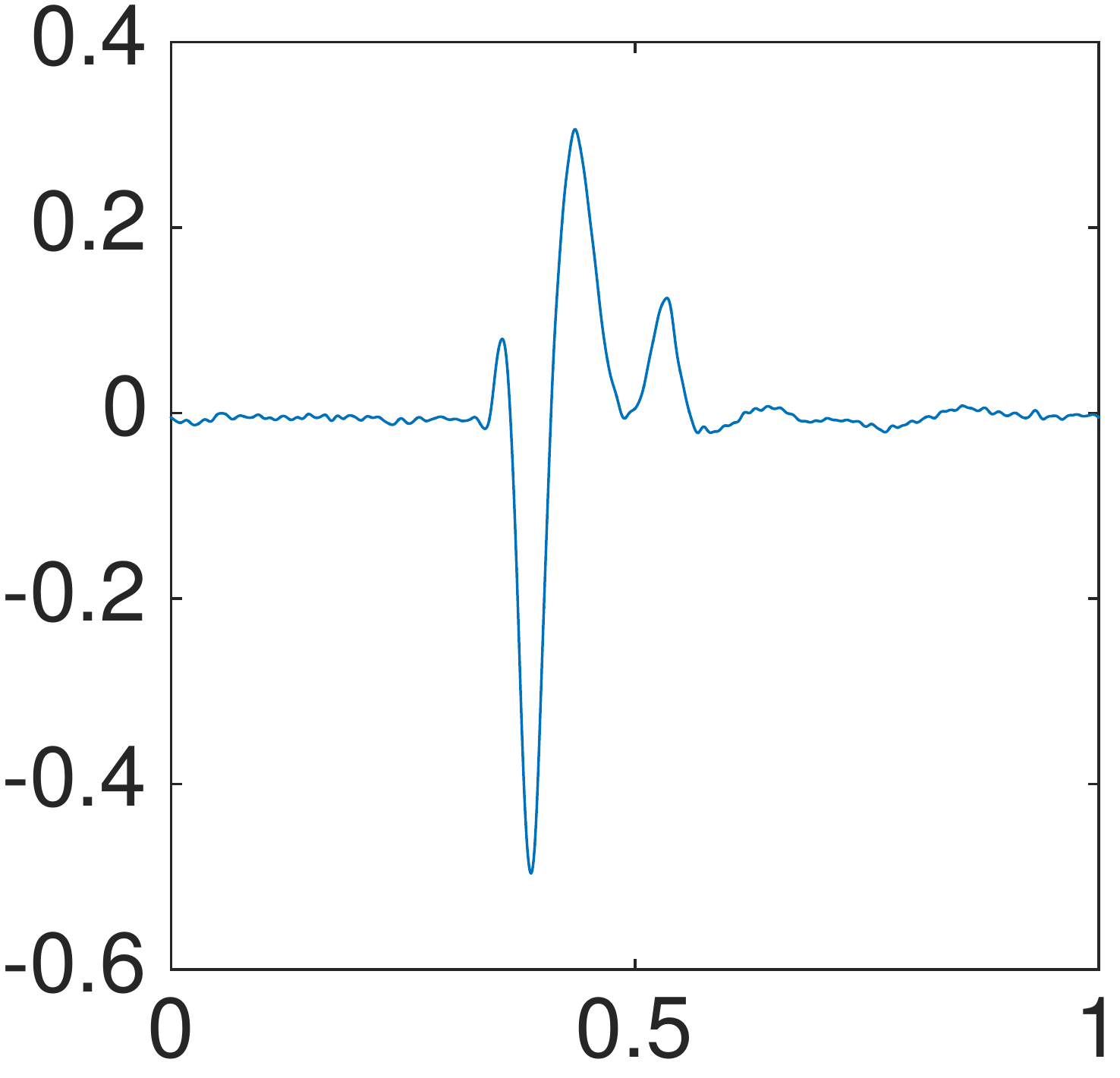}   &
      \includegraphics[width=1in]{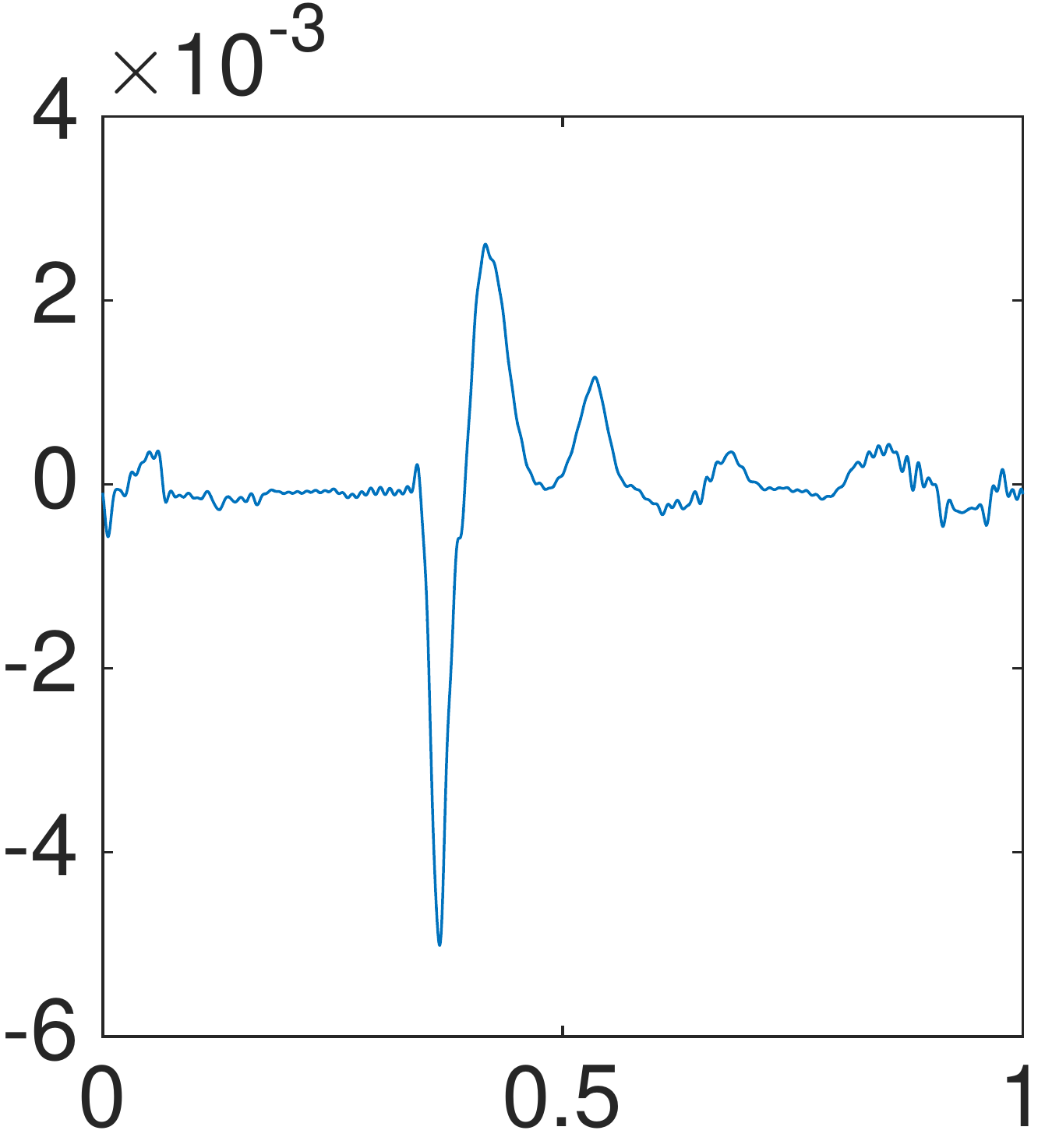}   &
      \includegraphics[width=1.1in]{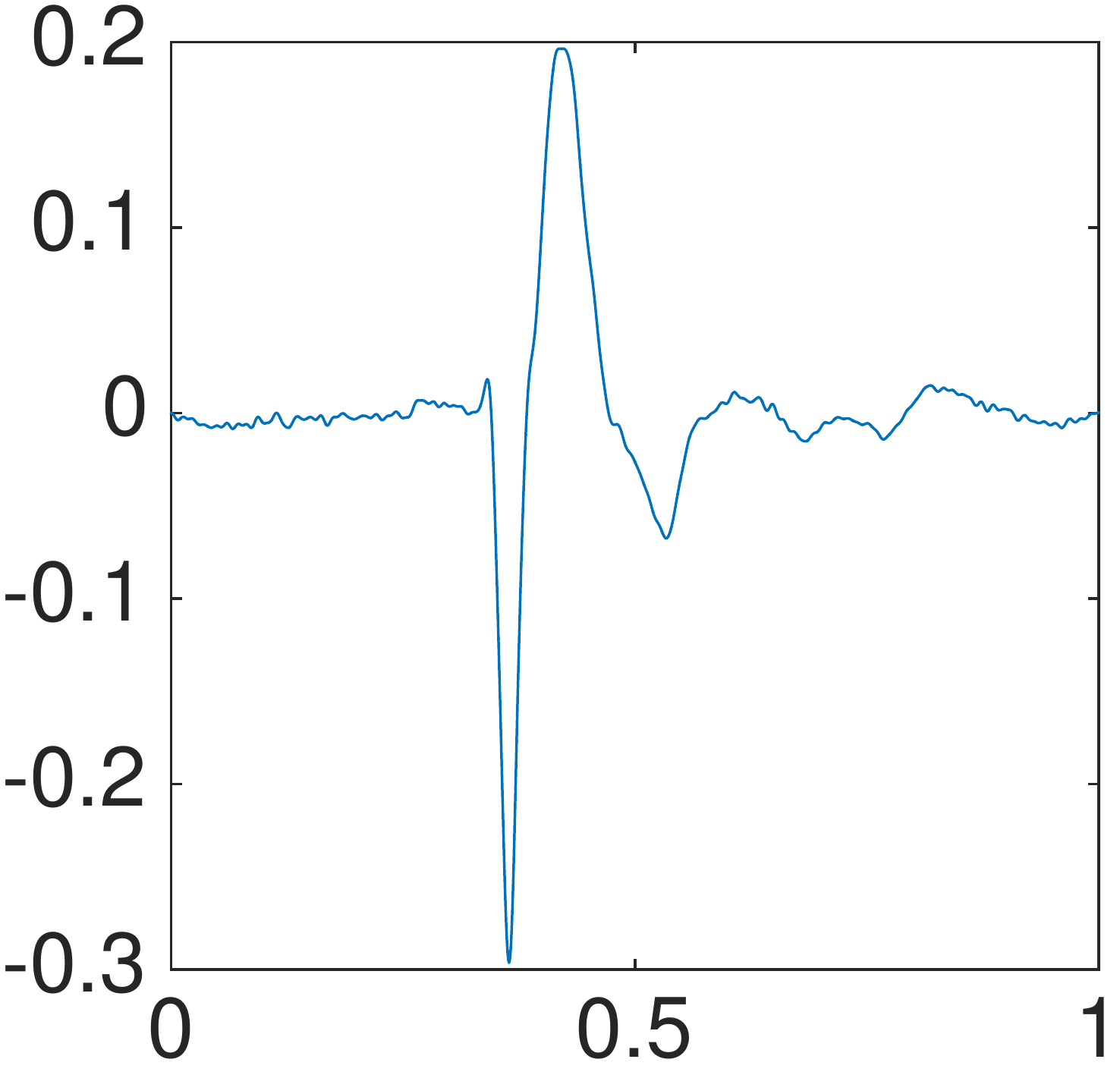}   &
      \includegraphics[width=1in]{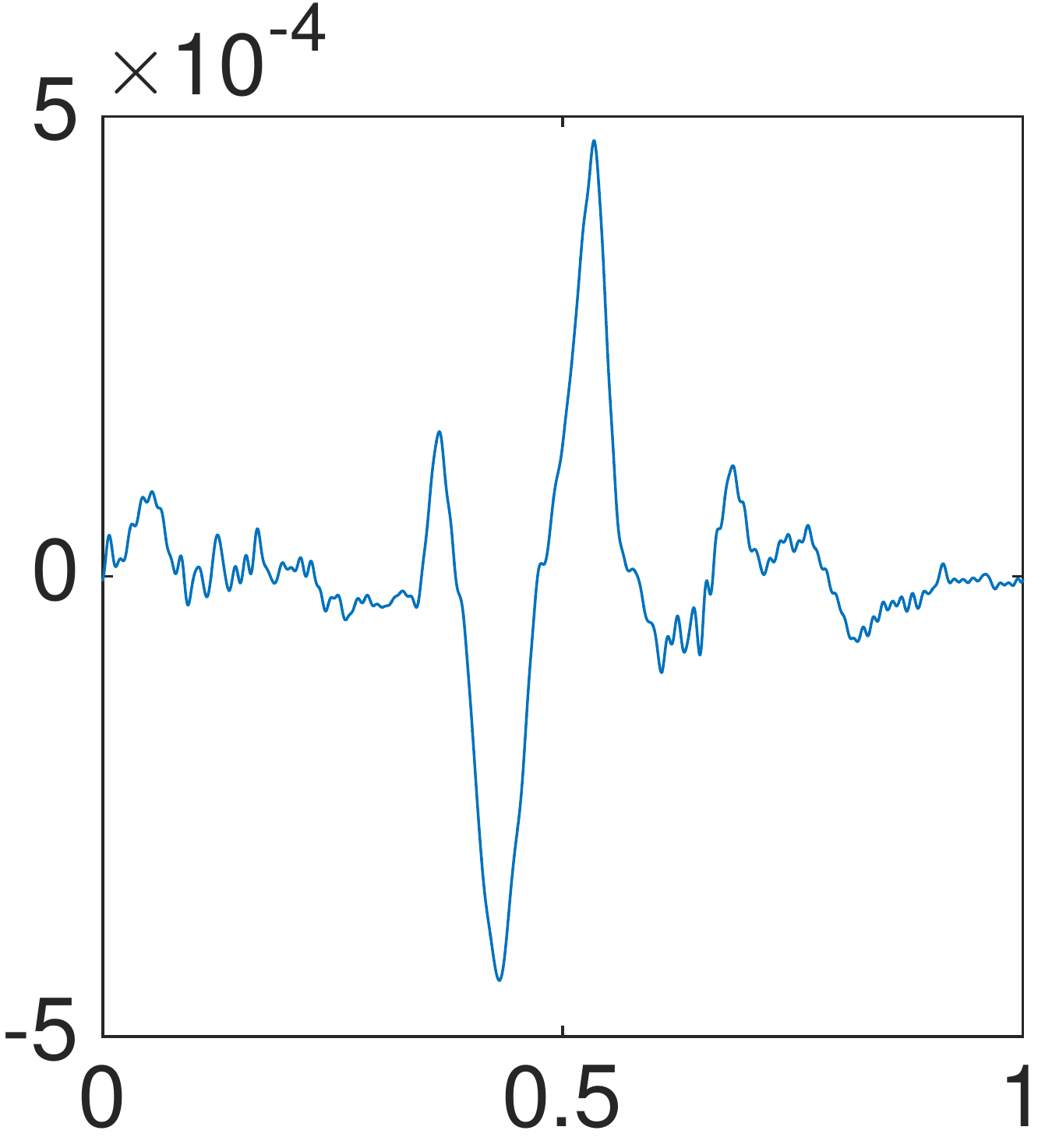}    
    \end{tabular}
  \end{center}
  \caption{Estimated shape functions $a_0 s_{c0}(t)$, $a_1 s_{c1}(t)$, $a_{-1} s_{c-1}(t)$, $b_1 s_{s1}(t)$, and $b_{-1} s_{s-1}(t)$ for the ECG signal in Figure \ref{fig:11_1}.}
\label{fig:11_3}
\end{figure}

\subsection{MMD for synthetic data}

In this section, a synthetic example of MMD is provided to demonstrate the effectiveness of RDSA. We consider a simple case when the signal has two MIMF's with ECG shape functions. In particular, we let the shape function series of each MIMF contain the same ECG shape function. This makes it easier to verify Algorithm \ref{alg:MMD2}. For example, we consider a signal of the form
\begin{equation}
\label{eqn:ex3}
f(t) = f_1(t)+f_2(t),
\end{equation}
where 
\begin{equation}\label{eq:f1_4}
f_1(t) = \alpha_1(\phi_1(t))s_1(300\pi \phi_1(t)),
\end{equation}
\begin{equation}\label{eq:f2_4}
f_2(t) = \alpha_2(\phi_2(t))s_2(440\pi\phi_2(t)),
\end{equation}
\[
\alpha_1(t) =  1+0.2\cos(2\pi t)+0.1\sin(2\pi t), 
\]
\[
\alpha_2(t) =  1+0.1\cos(2\pi t)+0.2\sin(2\pi t),
\]
\[
\phi_1(t) = x+0.006\sin(2\pi t),
\]
and 
\[
\phi_2(t) = x+0.006\cos(2\pi t).
\]
$s_1(2\pi t)$ and $s_2(2\pi t)$ are generalized shape functions defined on $[0,1]$ as shown in Figure \ref{fig:4_0}. We apply Algorithm \ref{alg:MMD2} with the known instantaneous phases just above to estimate the multiresolution expansion coefficients and the shape functions series. The product of the multiresolution expansion coefficient and its corresponding shape function is shown in Figure \ref{fig:4}. The estimation errors are very small; the estimated results and the ground truth are almost indistinguishable.

\begin{figure}[ht!]
  \begin{center}
    \begin{tabular}{cc}
      \includegraphics[height=1in]{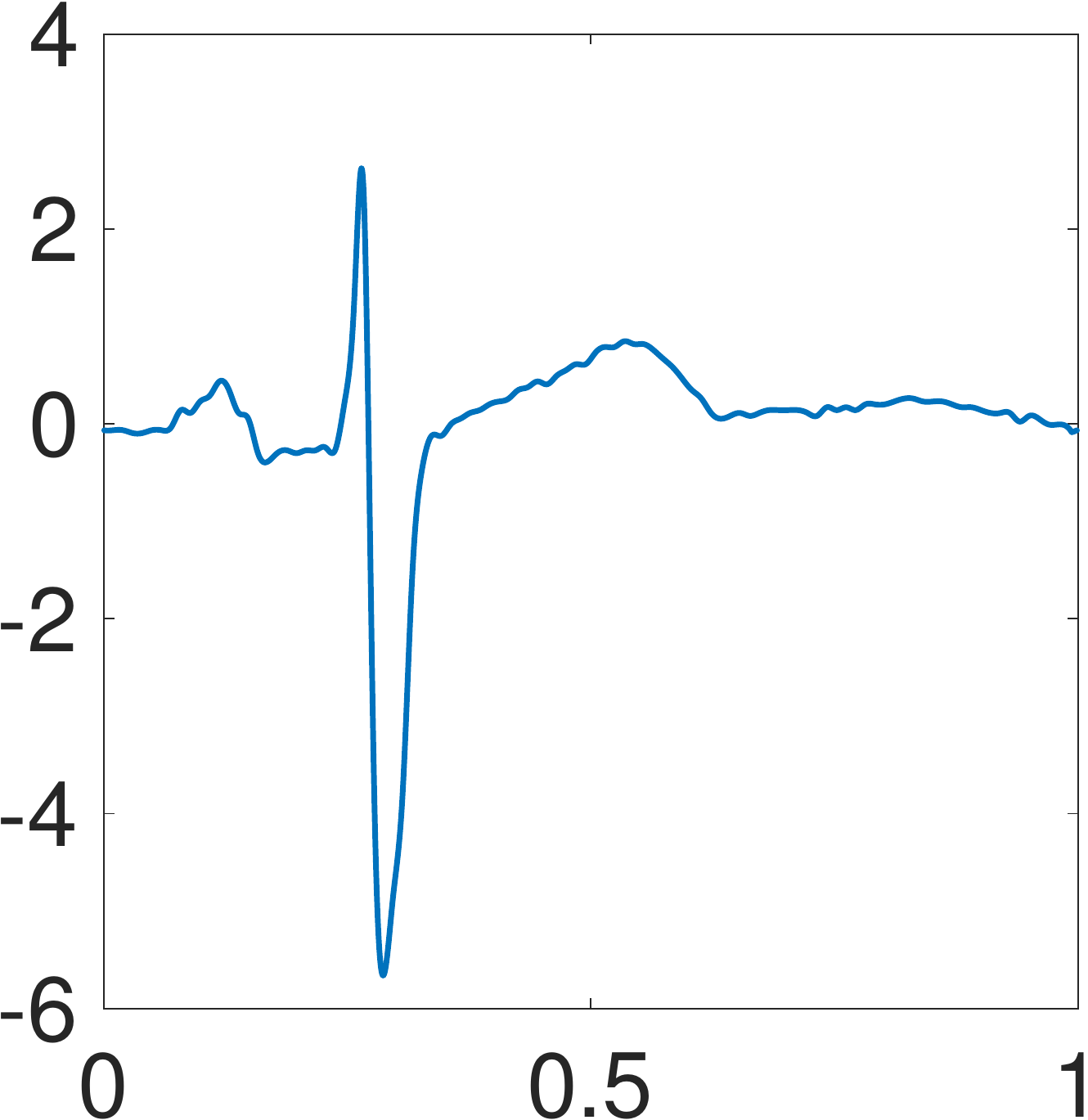}  &
   \includegraphics[height=1in]{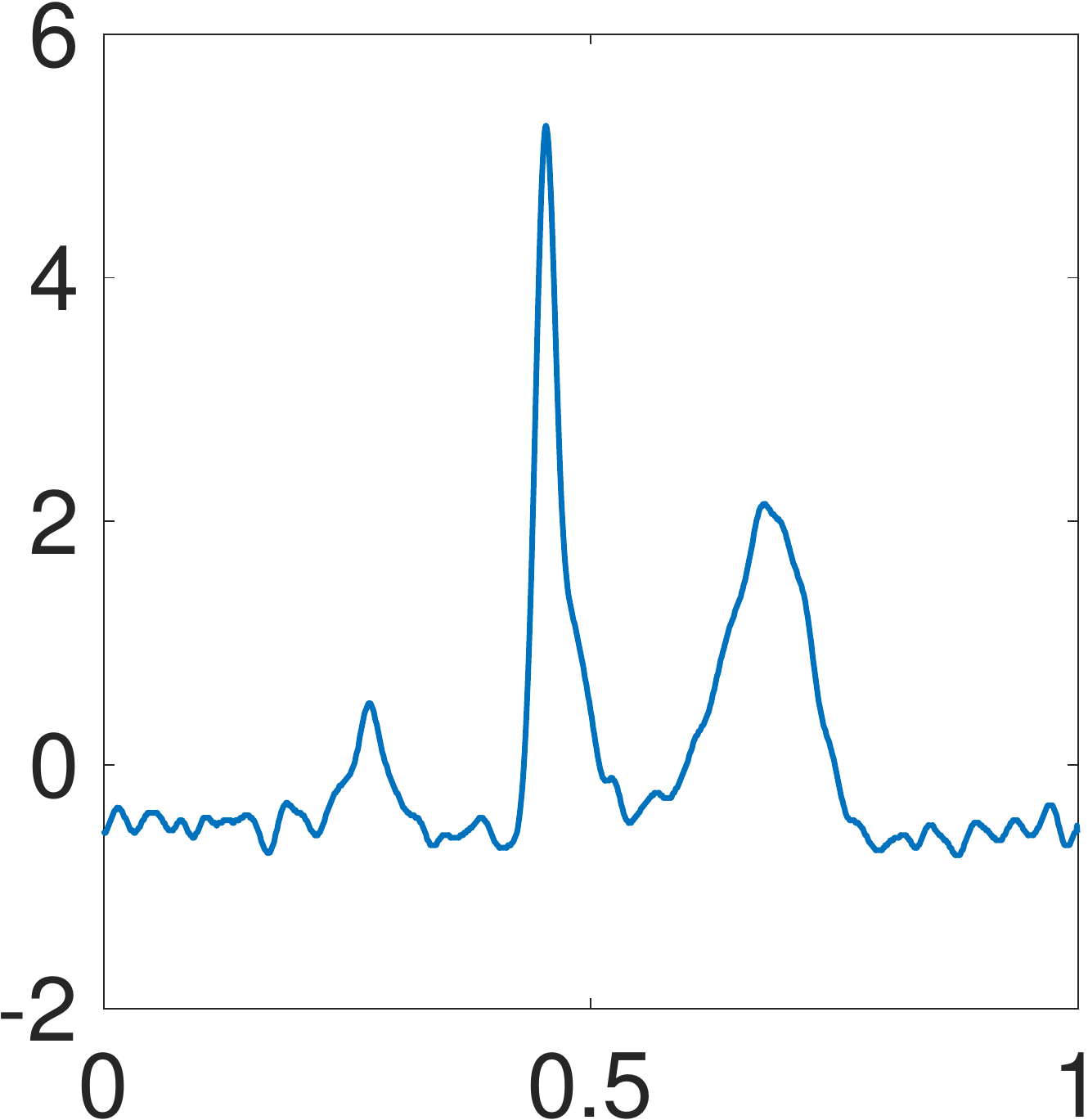}  
    \end{tabular}
  \end{center}
  \caption{Shape function $s_1(2\pi t)$ in \eqref{eq:f1_4} and $s_2(2\pi t)$ in \eqref{eq:f2_4}.}
\label{fig:4_0}
\end{figure}

\begin{figure}[ht!]
  \begin{center}
    \begin{tabular}{ccccc}
      \includegraphics[width=1in]{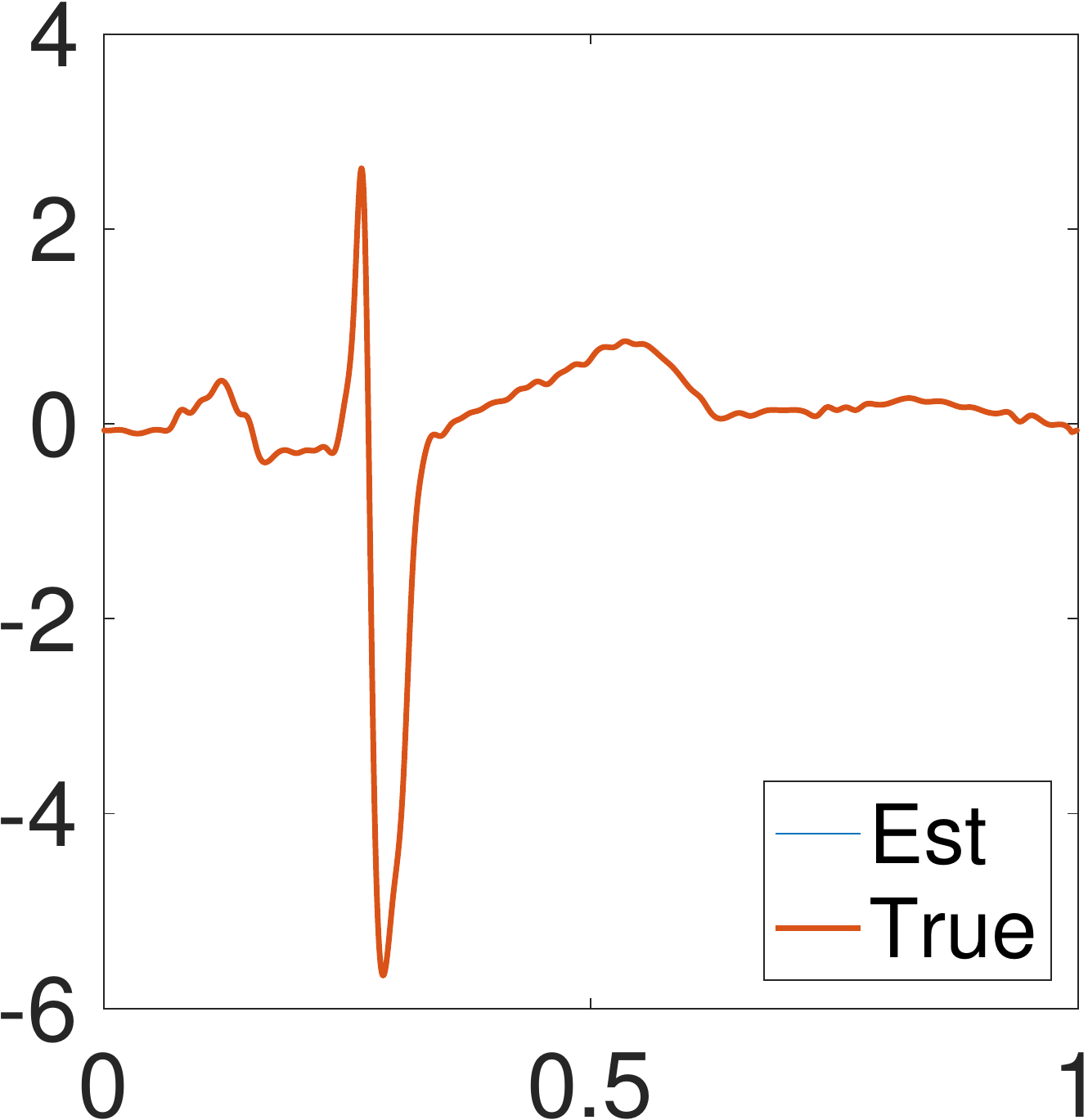}   &
      \includegraphics[width=1.075in]{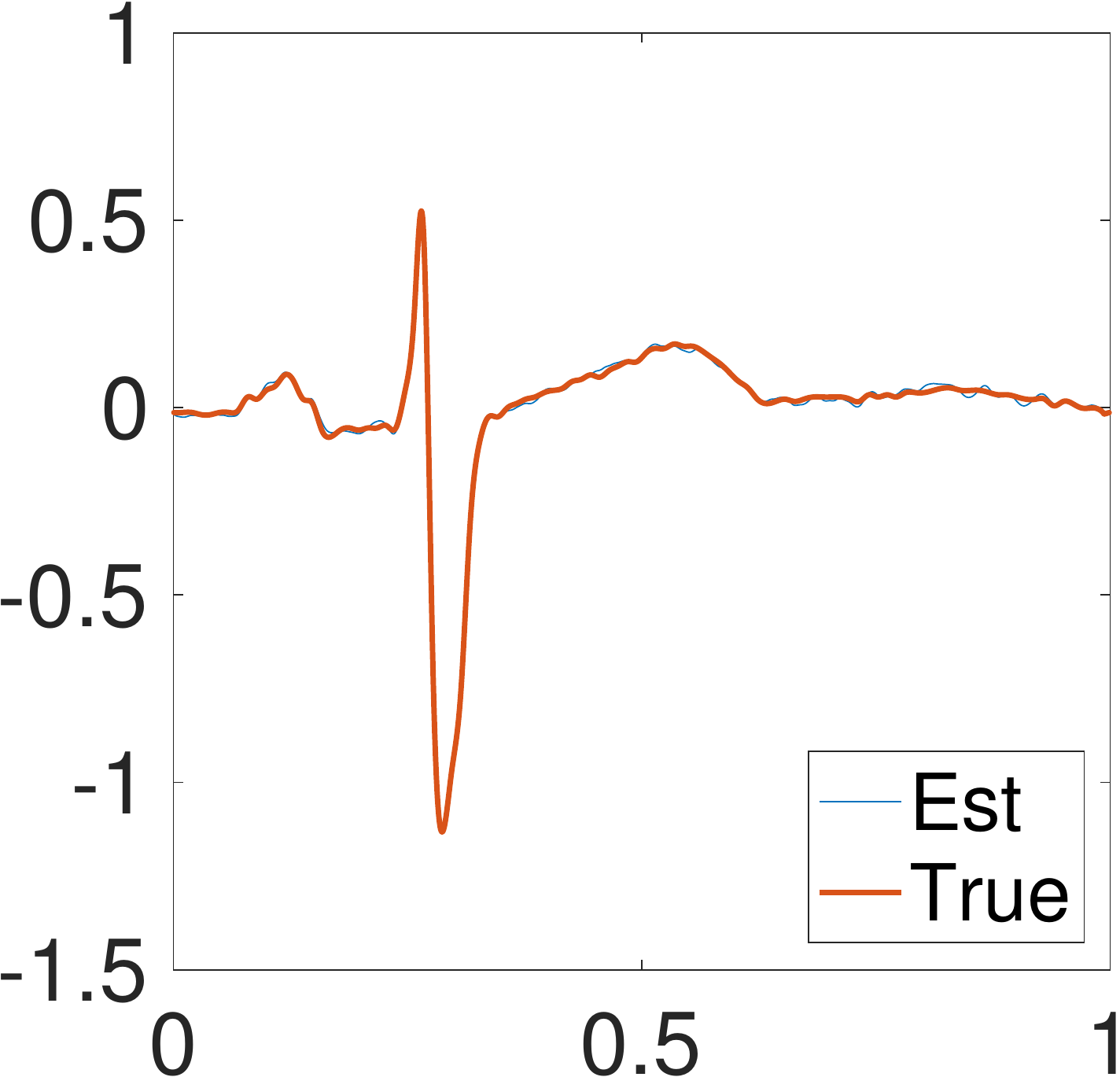}   &
      \includegraphics[width=1.125in]{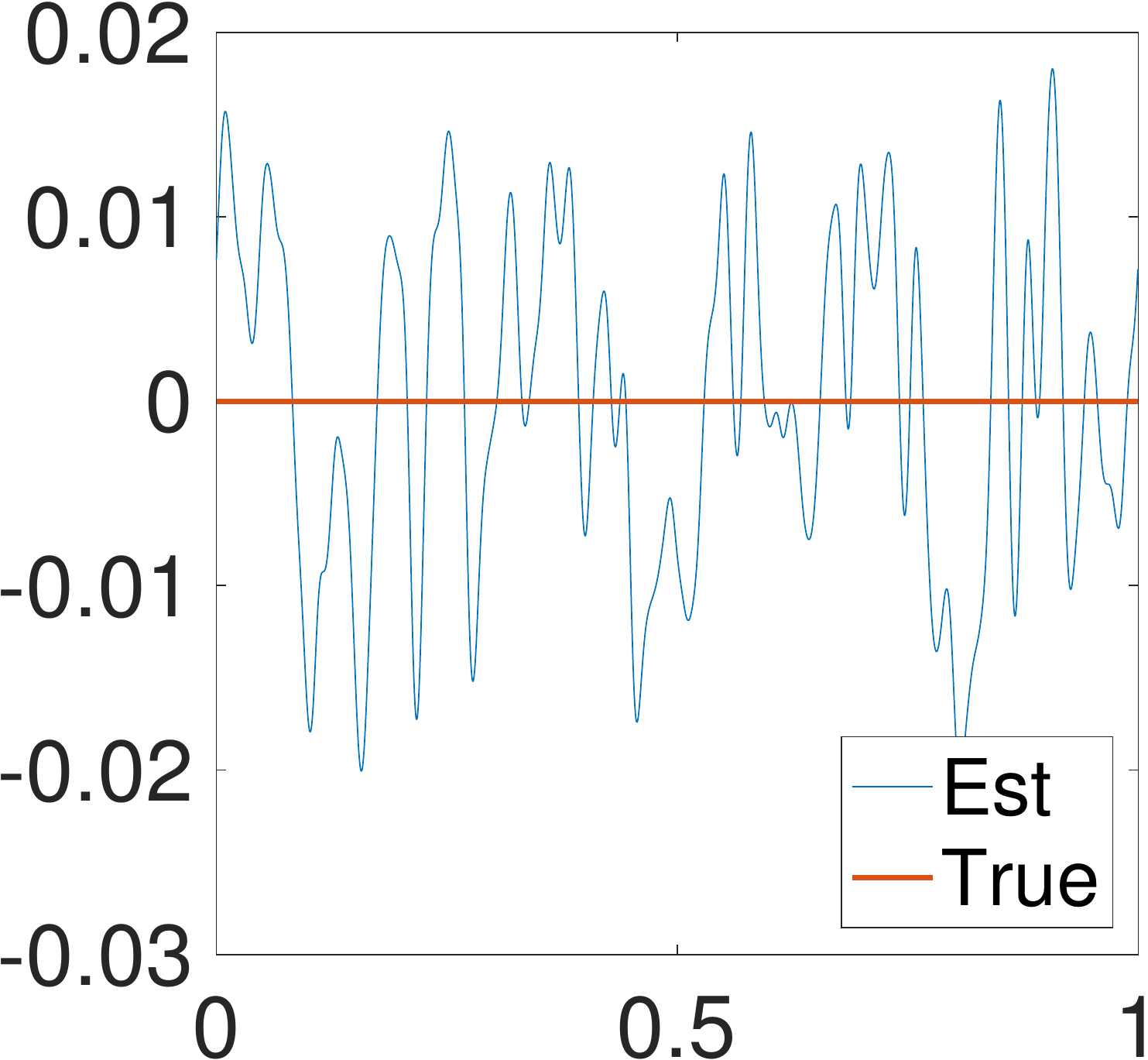}   &
      \includegraphics[width=1.075in]{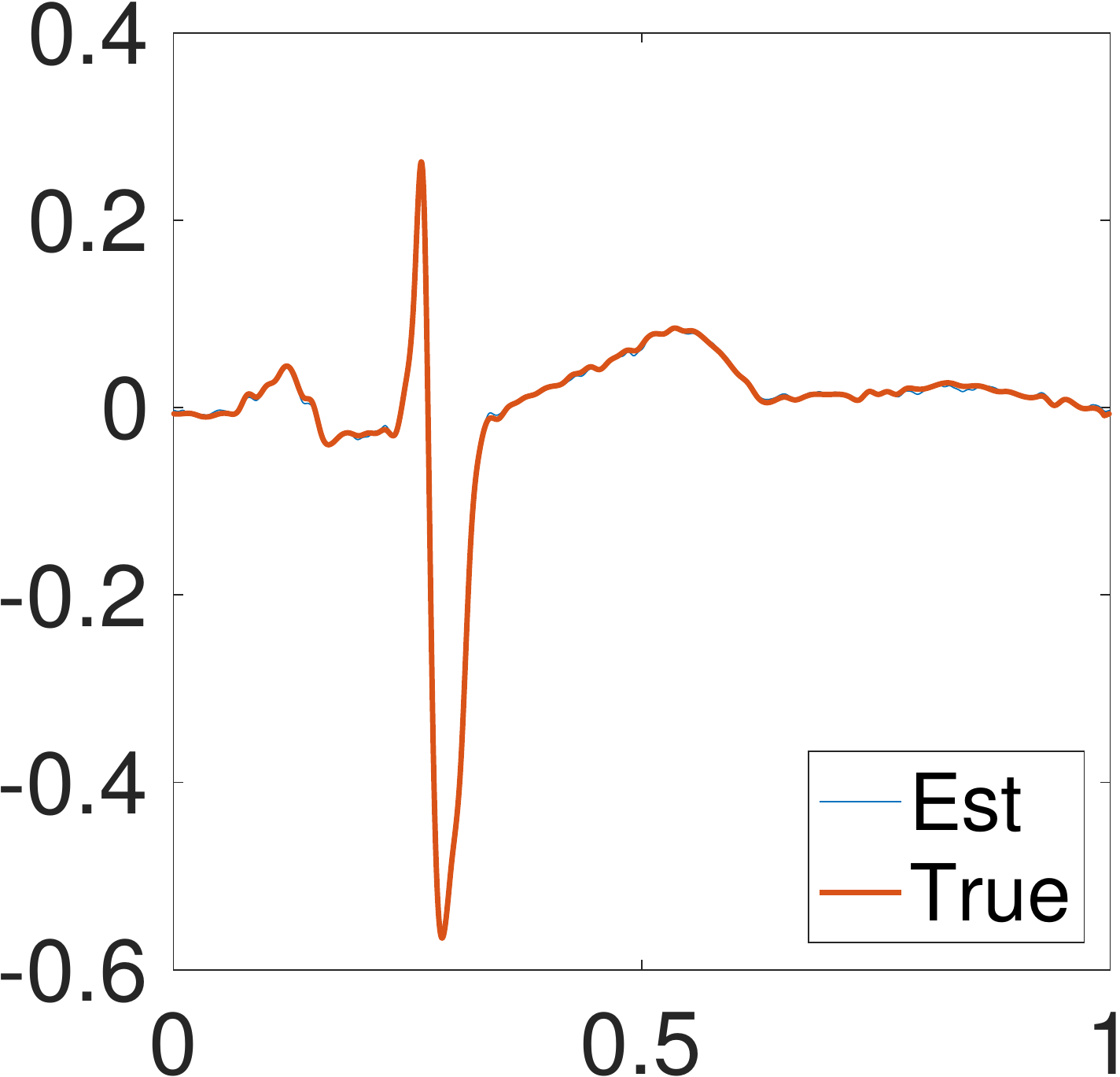}   &
      \includegraphics[width=1in]{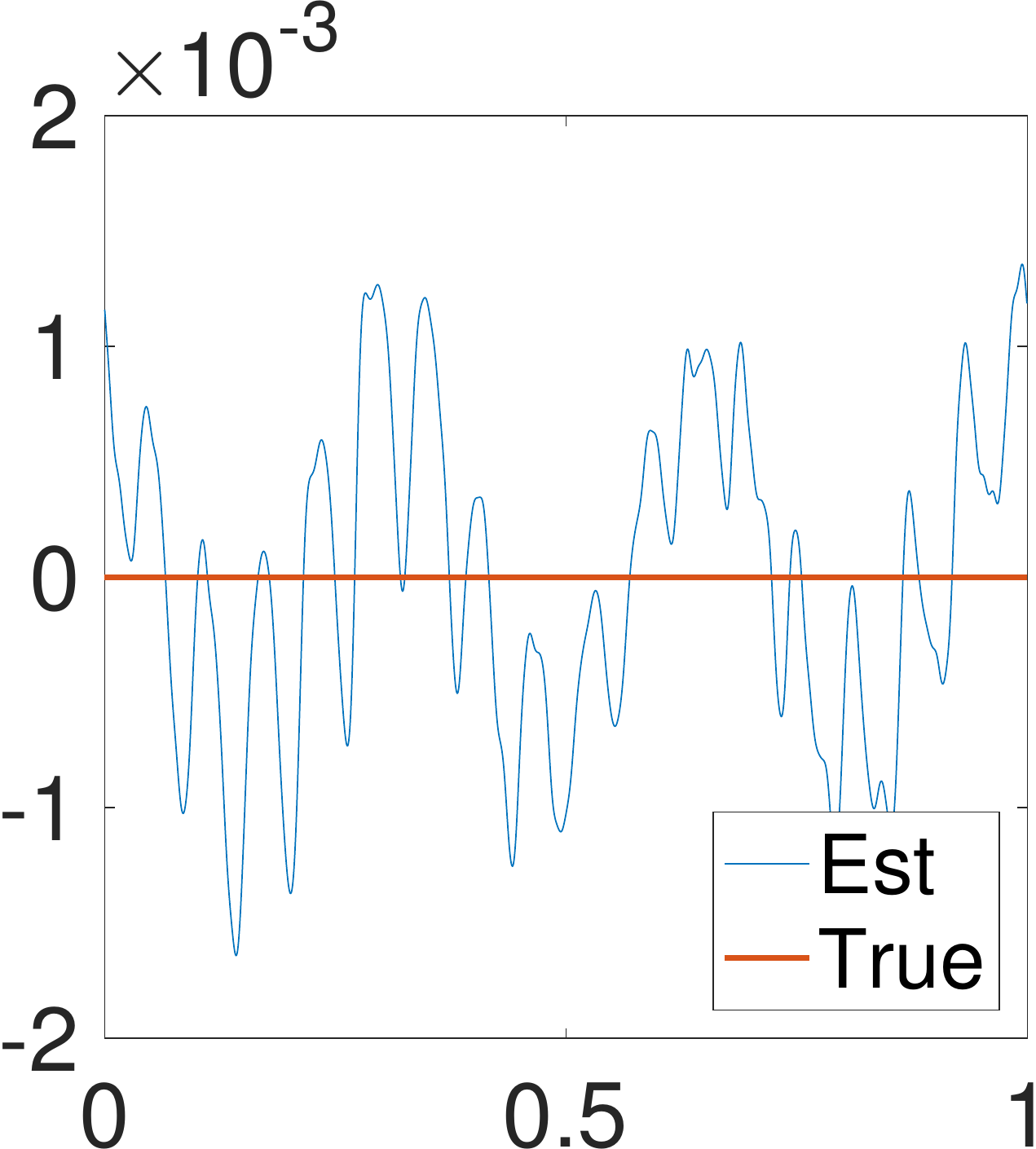}   \\
      \includegraphics[width=1in]{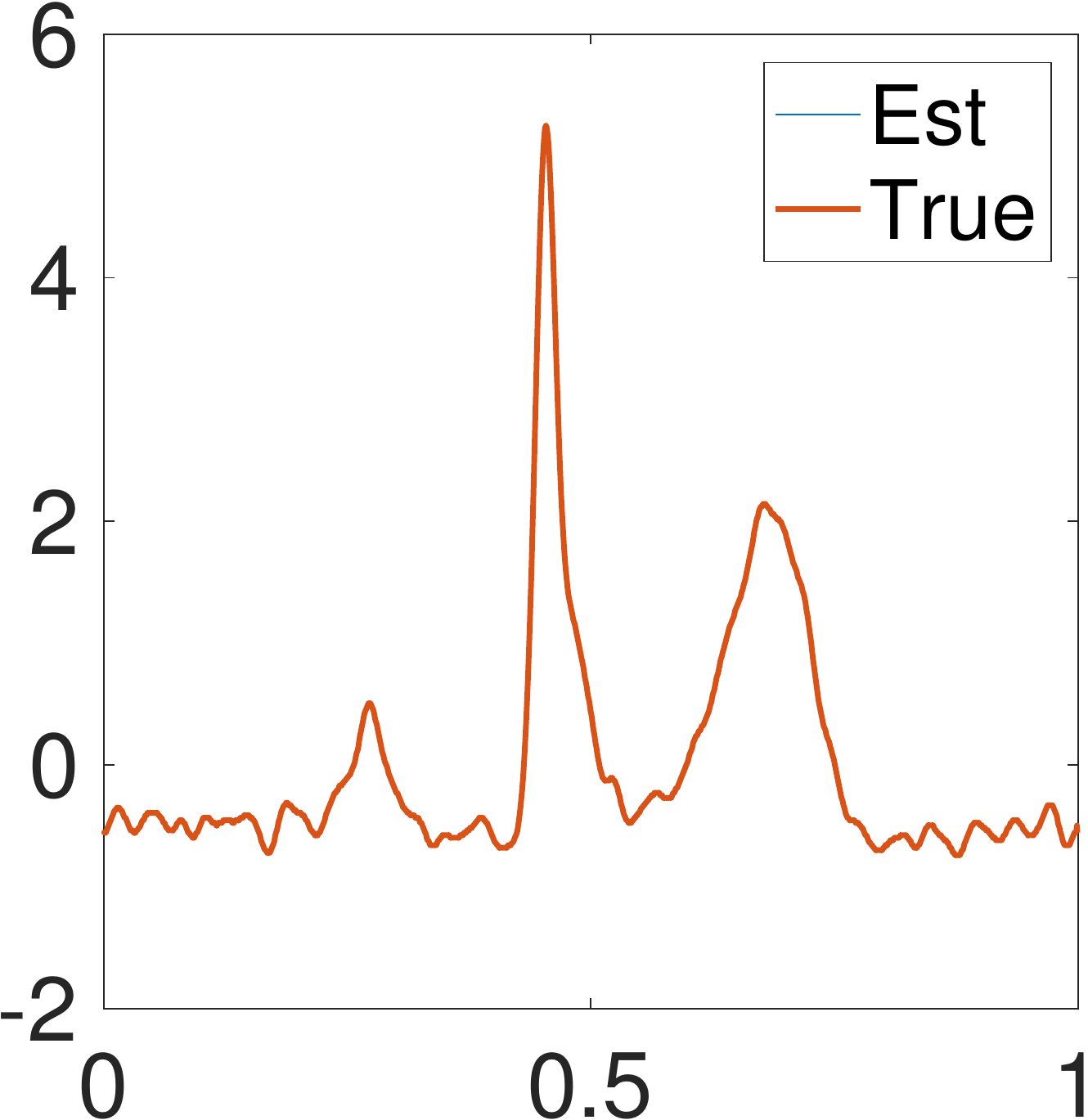}   &
      \includegraphics[width=1.1in]{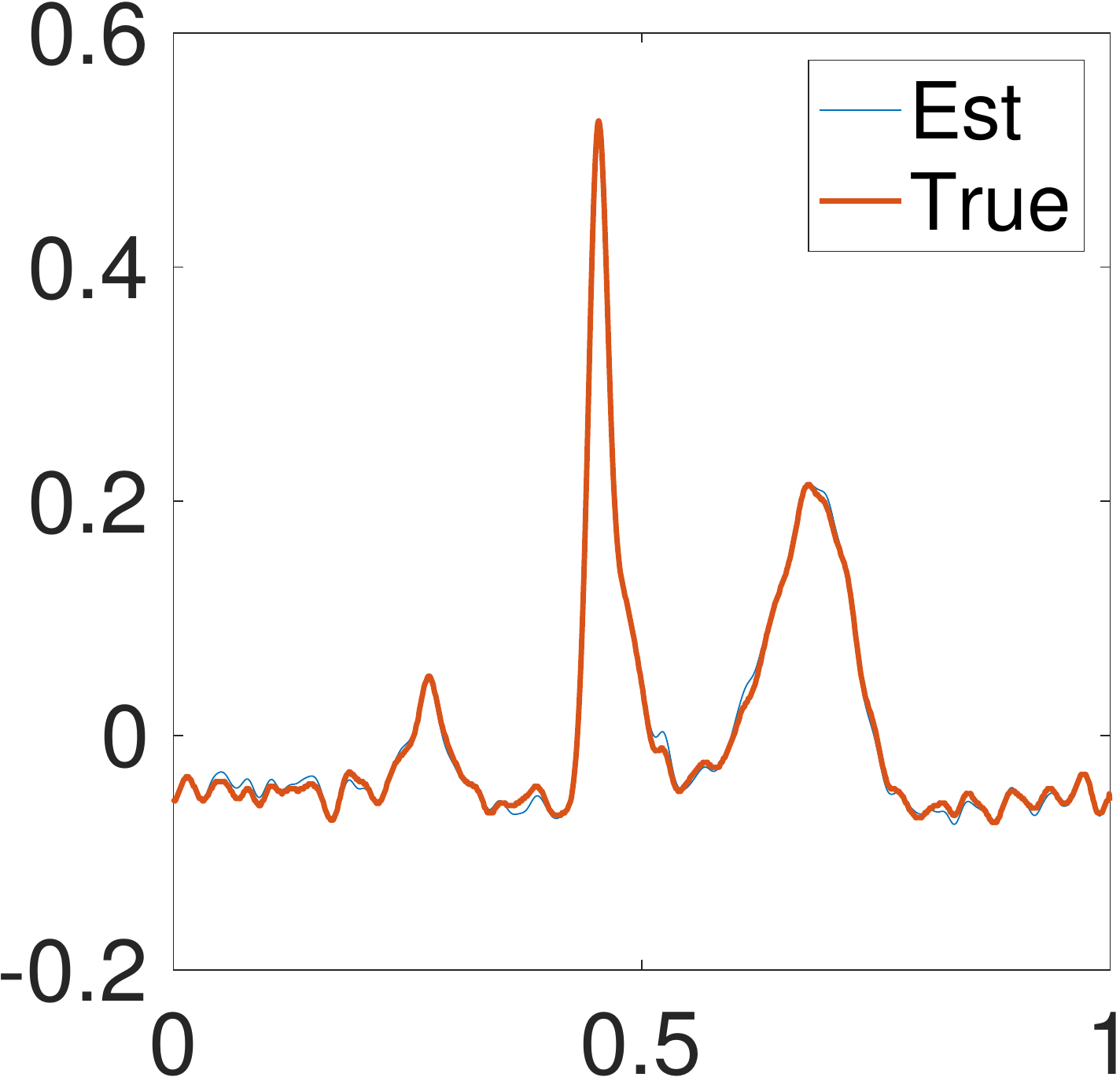}   &
      \includegraphics[width=1.15in]{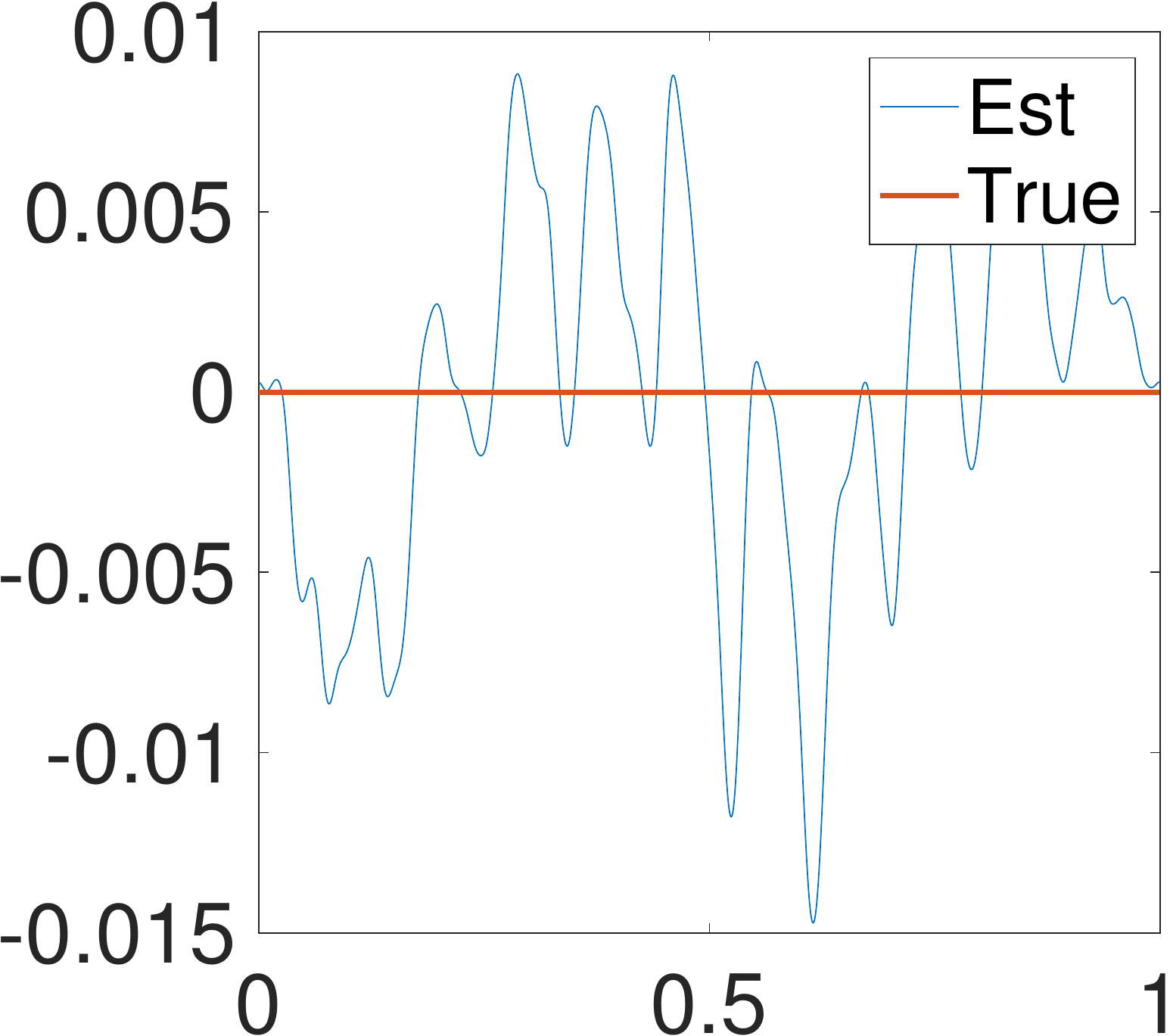}   &
      \includegraphics[width=1.075in]{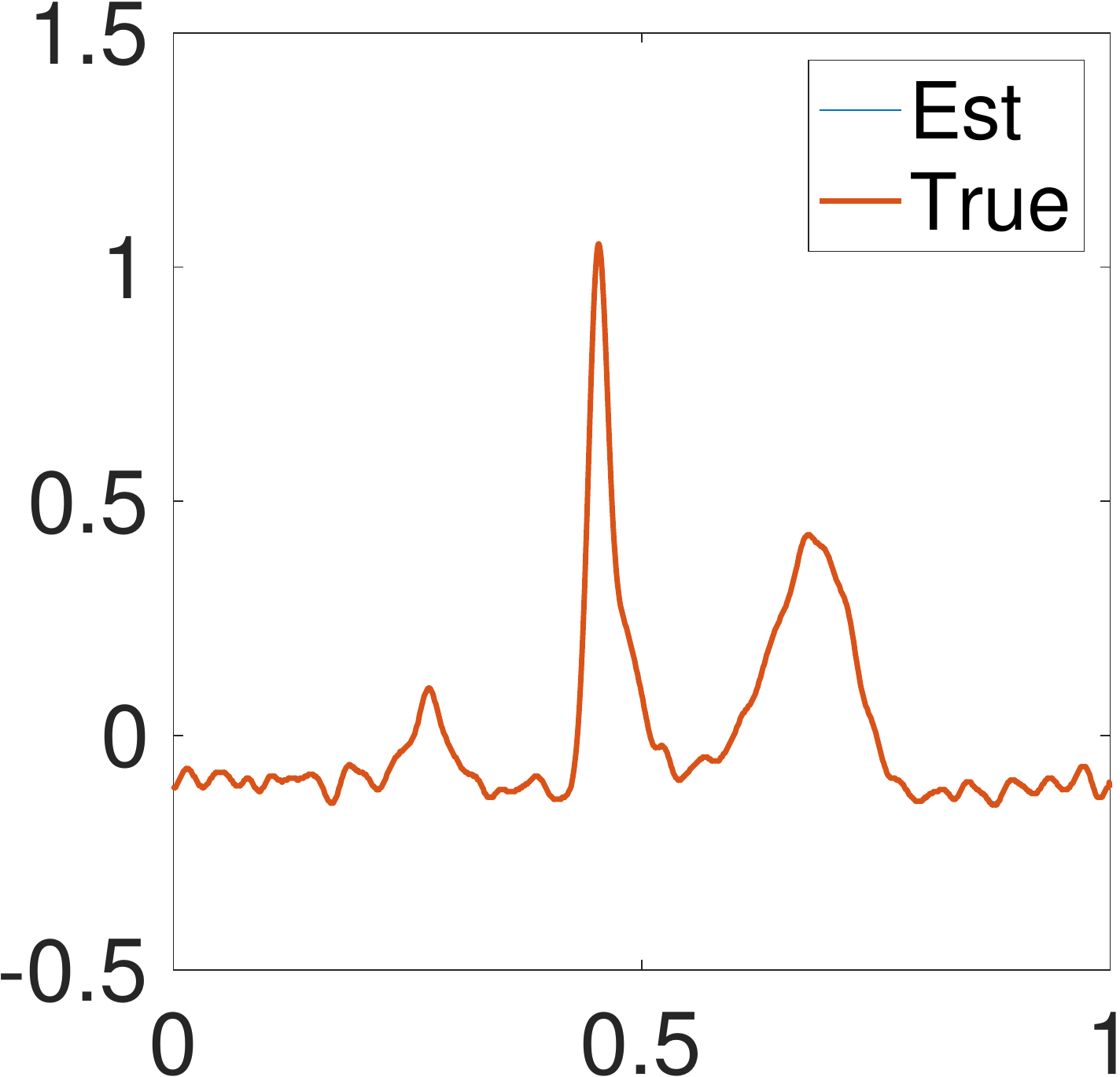}   &
      \includegraphics[width=1in]{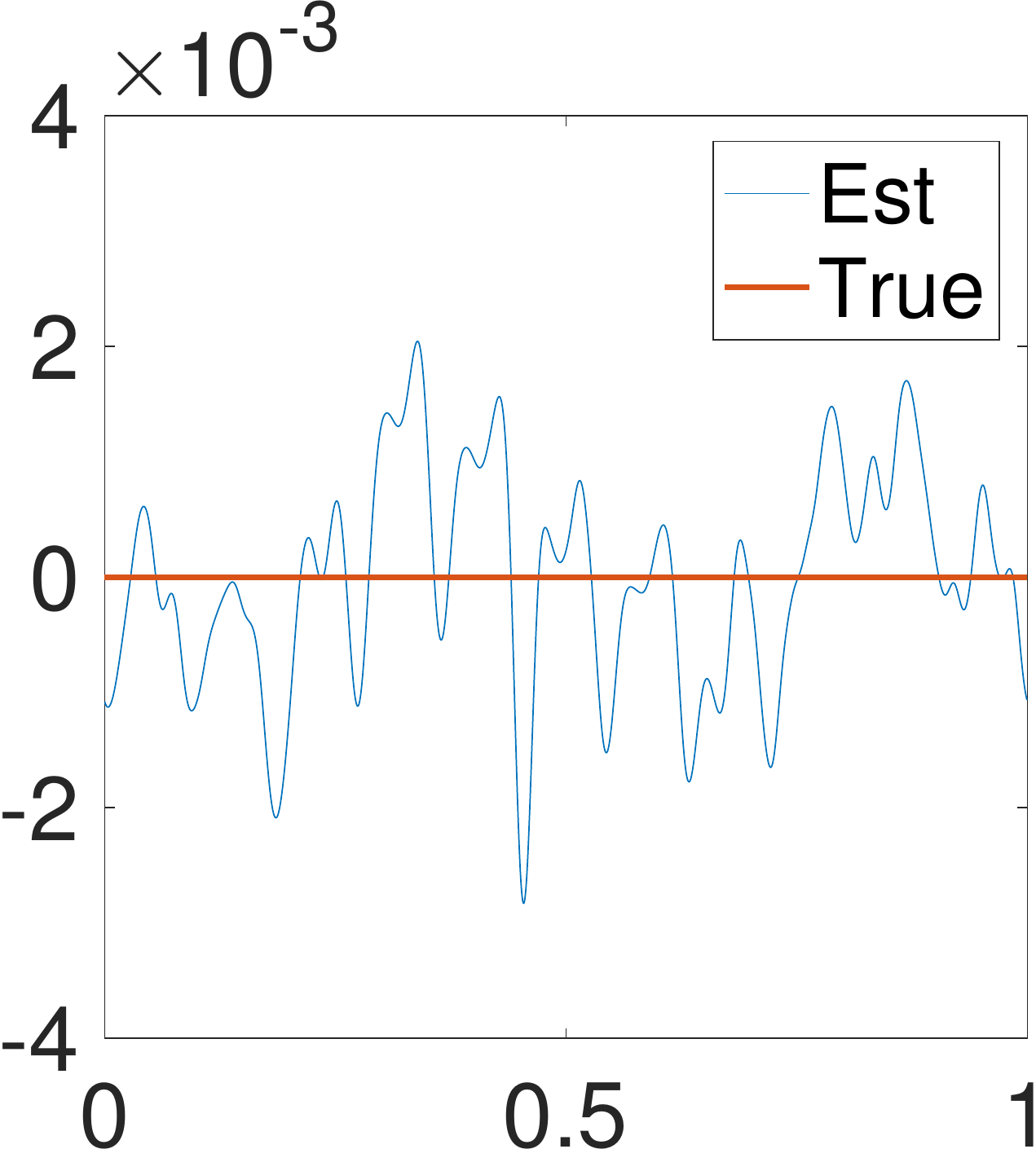}   
    \end{tabular}
  \end{center}
  \caption{Top: estimated shape functions $a_{0,1}s_{c0,1}(2\pi t)$, $a_{1,1}s_{c1,1}(2\pi t)$, $a_{-1,1}s_{c-1,1}(2\pi t)$, $b_{1,1}s_{s1,1}(2\pi t)$, and $b_{-1,1}s_{s-1,1}(2\pi t)$ of $f_1(t)$ in \eqref{eq:f1_4}. Bottom: estimated shape functions $a_{0,2}s_{c0,2}(2\pi t)$, $a_{1,2}s_{c1,2}(2\pi t)$, $a_{-1,2}s_{c-1,2}(2\pi t)$, $b_{1,2}s_{s1,2}(2\pi t)$, and $b_{-1,2}s_{s-1,2}(2\pi t)$ of $f_2(t)$ in \eqref{eq:f2_4}.}
\label{fig:4}
\end{figure}

\subsection{MMD for real data}

This is an example of photoplethysmography (PPG)\footnote{From \url{http://www.capnobase.org}.} that contains a hemodynamical MIMF and a respiration MIMF. The instantaneous frequencies and phases are not known and they are estimated via the synchrosqueezed transform in \cite{1DSSWPT}. Figure \ref{fig:15_1} shows the estimated instantaneous frequencies of the respiratory and cardiac cycles. Inputing their corresponding instantaneous phases into RDSA, the PPG signal is separated into a respiratory MIMF and a cardiac MIMF as shown in Figure \ref{fig:15_2}; their leading multiresolution shape functions are shown in Figure \ref{fig:15_3}. 

The last two panels of Figure \ref{fig:15_2} shows that the PPG signal has been completely separated into two MIMF's; the residual signal only contains noise, a smooth trend, and some sharp changes that are not correlated to the oscillation in MIMF's. The second panel shows that the MIMF model can characterize time-varying shape functions, while the third panel shows that the MIMF model can capture the time-varying amplitude functions. 

\begin{figure}[ht!]
  \begin{center}
    \begin{tabular}{c}
  \hspace{-0.5cm}  \includegraphics[width=6.1in]{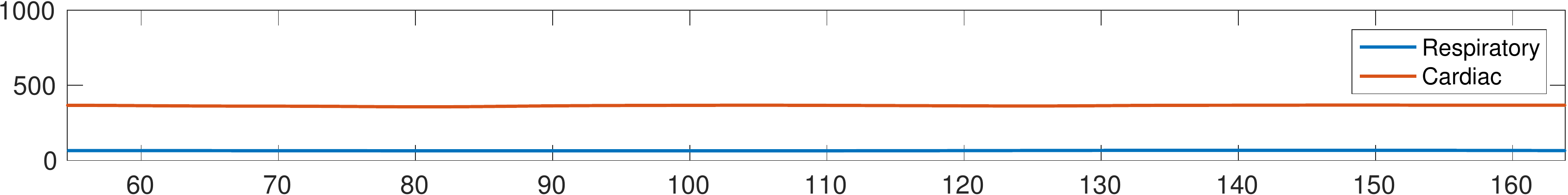} 
    \end{tabular}
  \end{center}
  \caption{Estimated fundamental instantaneous frequencies of the real PPG signal in the first panel of Figure \ref{fig:15_2} by the synchrosqueezed transform.}
\label{fig:15_1}
\end{figure}

\begin{figure}[ht!]
  \begin{center}
    \begin{tabular}{c}
      \includegraphics[width=6in]{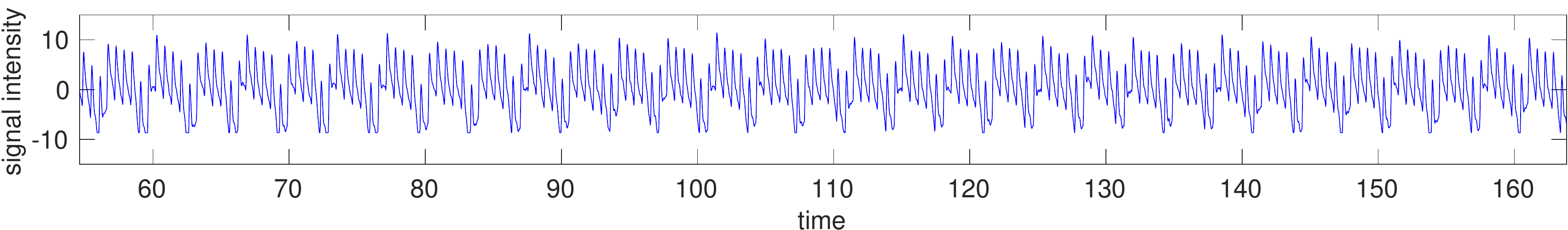}  \\
   \includegraphics[width=6in]{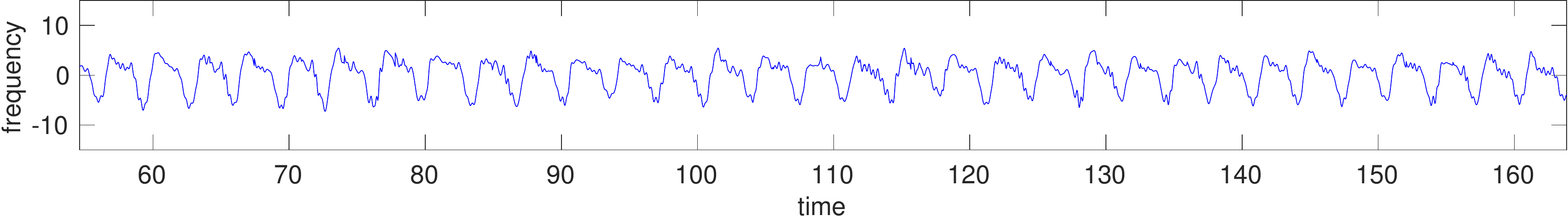}   \\
   \includegraphics[width=6in]{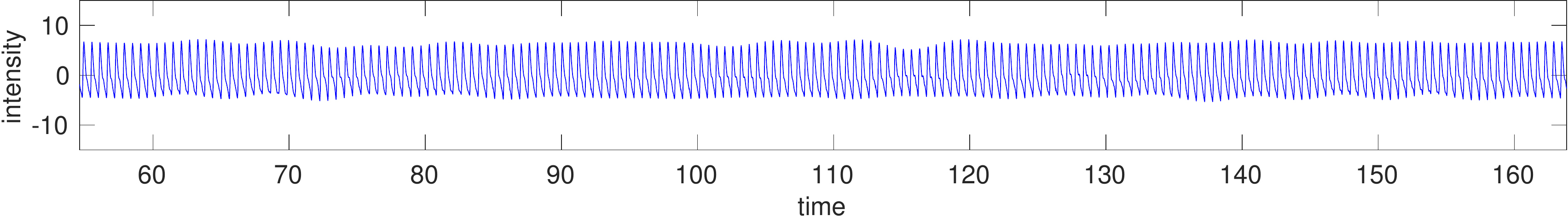}  \\
      \includegraphics[width=6in]{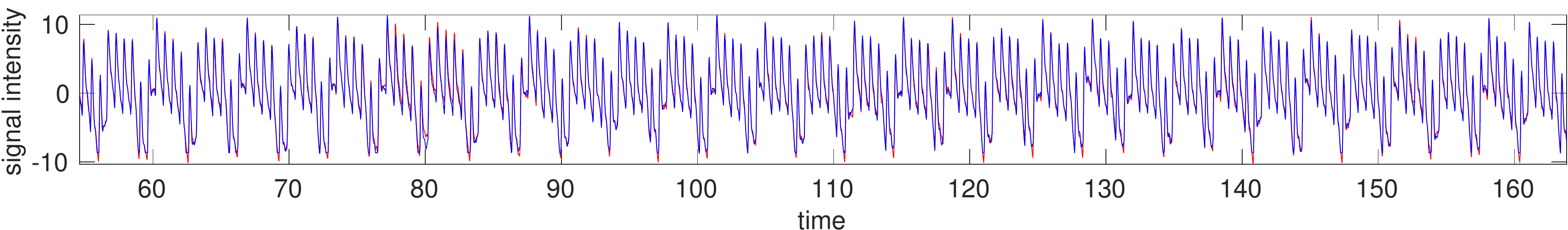}  \\
      \includegraphics[width=6in]{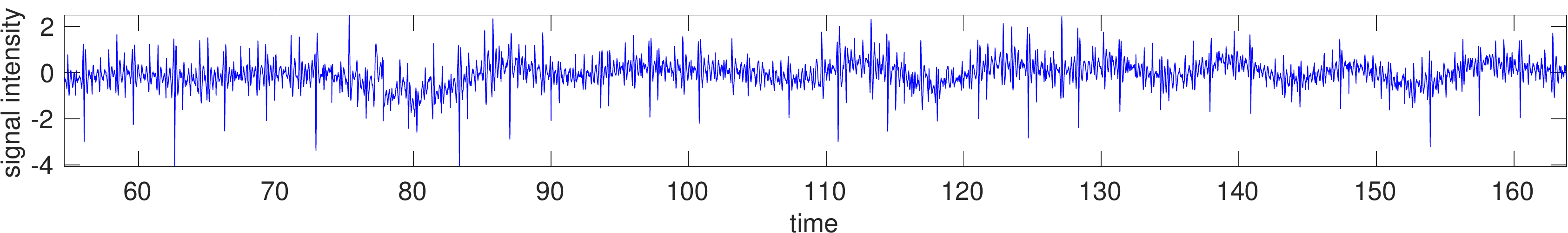}  
    \end{tabular}
  \end{center}
  \caption{First panel: the raw PPG signal $f(t)$. Second panel: the respiratory MIMF $f_1(t)$. Third panel: the cardiac MIMF $f_2(t)$. Fourth panel: the summation of the respiratory and cardiac MIMF's $f_1(t)+f_2(t)$ (red) compared to the raw PPG signal $f(t)$ (blue). The fifth panel: the residual signal $f(t)-f_1(t)-f_2(t)$.}
\label{fig:15_2}
\end{figure}

\begin{figure}[ht!]
  \begin{center}
    \begin{tabular}{ccccc}
      \includegraphics[width=1in]{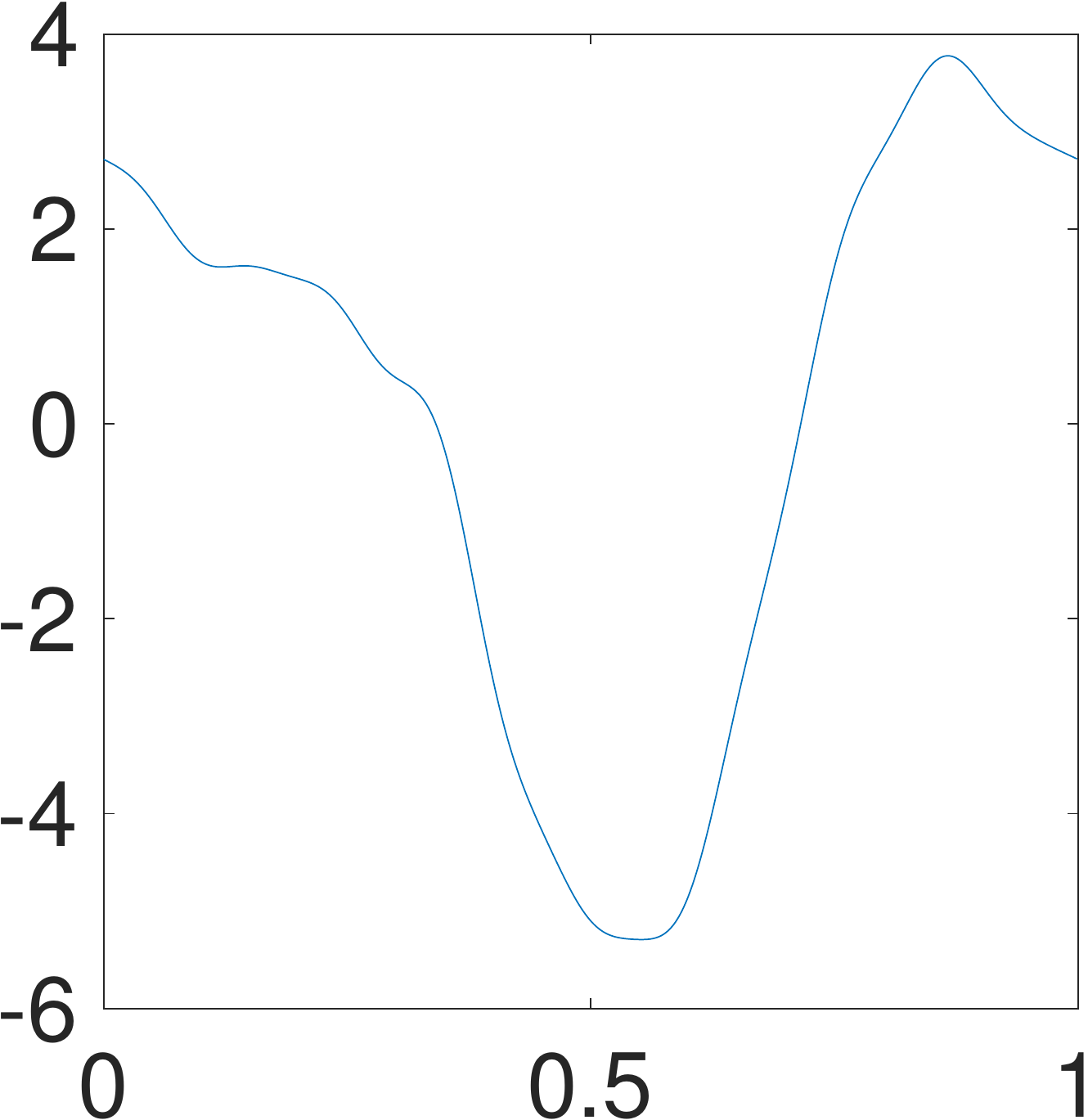}   &
      \includegraphics[width=1.05in]{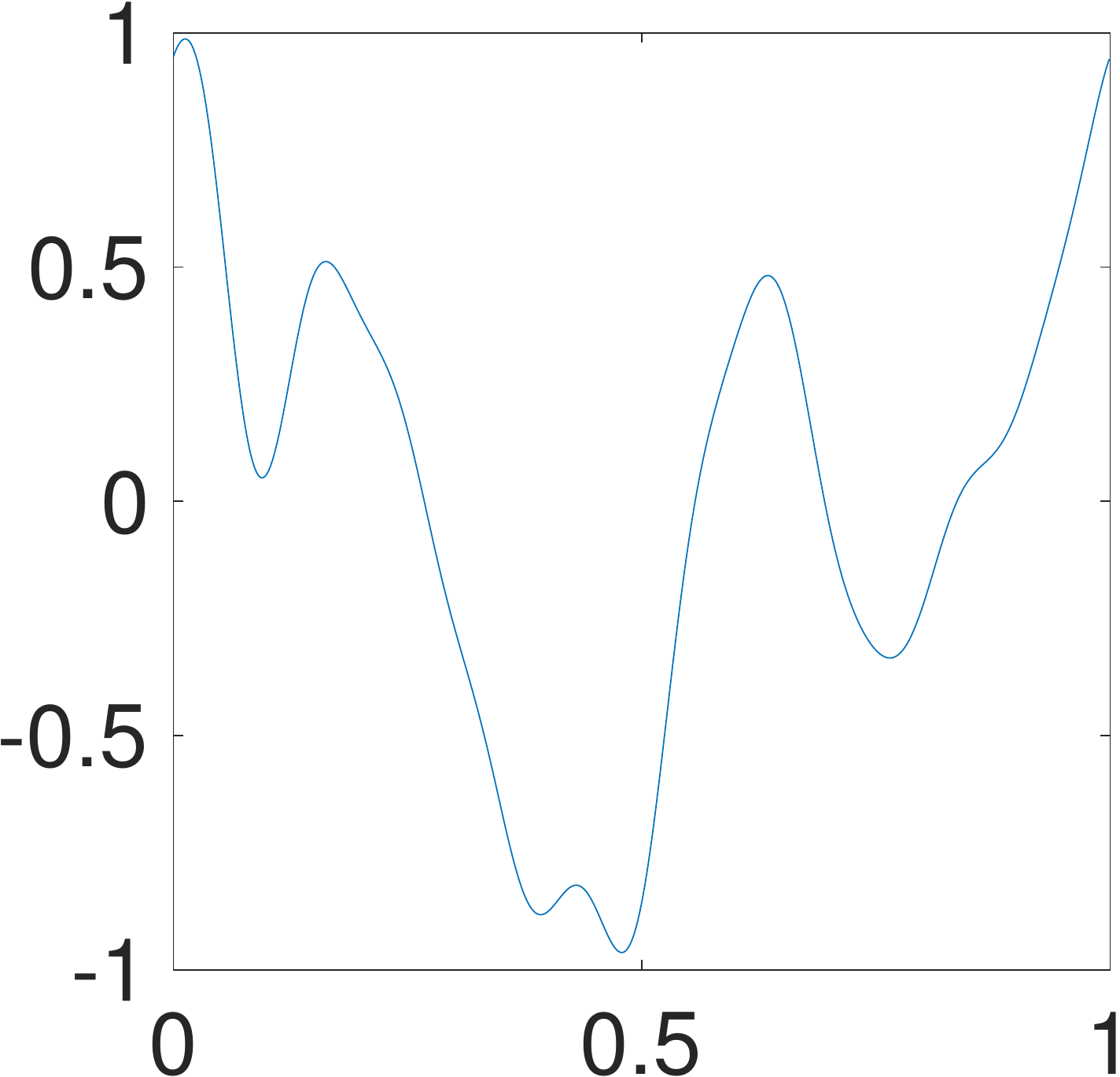}   &
      \includegraphics[width=1.1in]{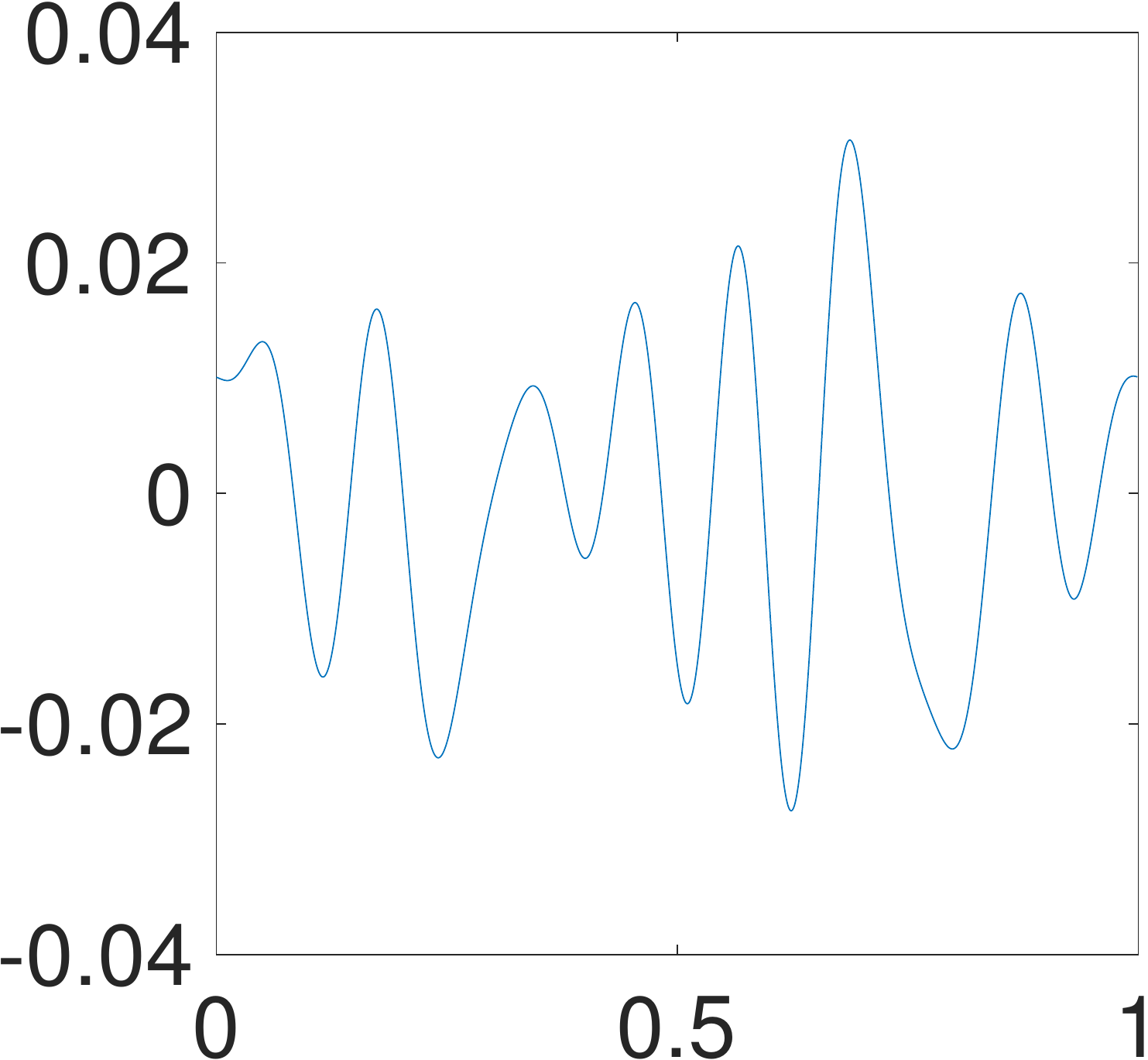}   &
      \includegraphics[width=1.05in]{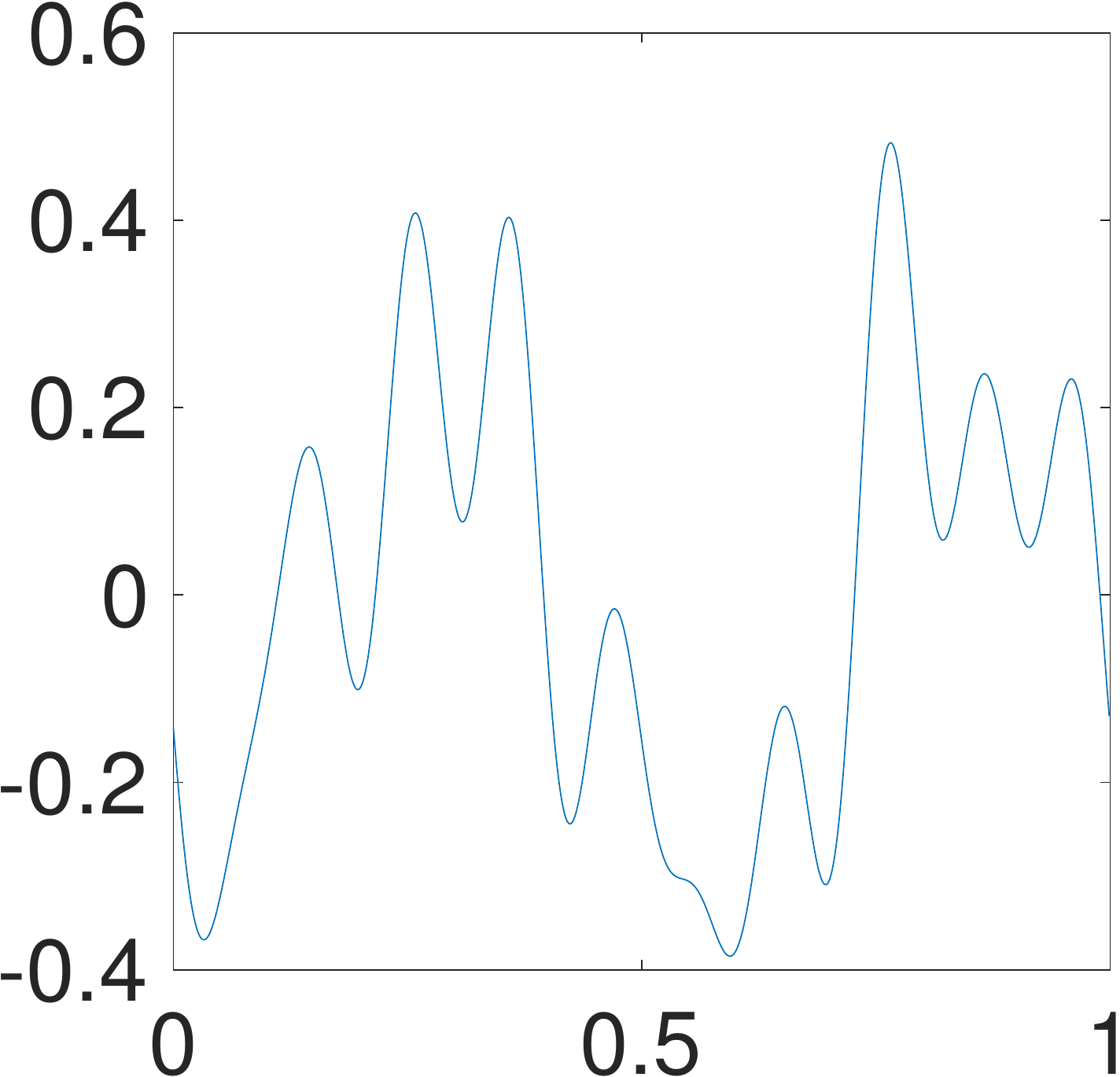}   &
      \includegraphics[width=1in]{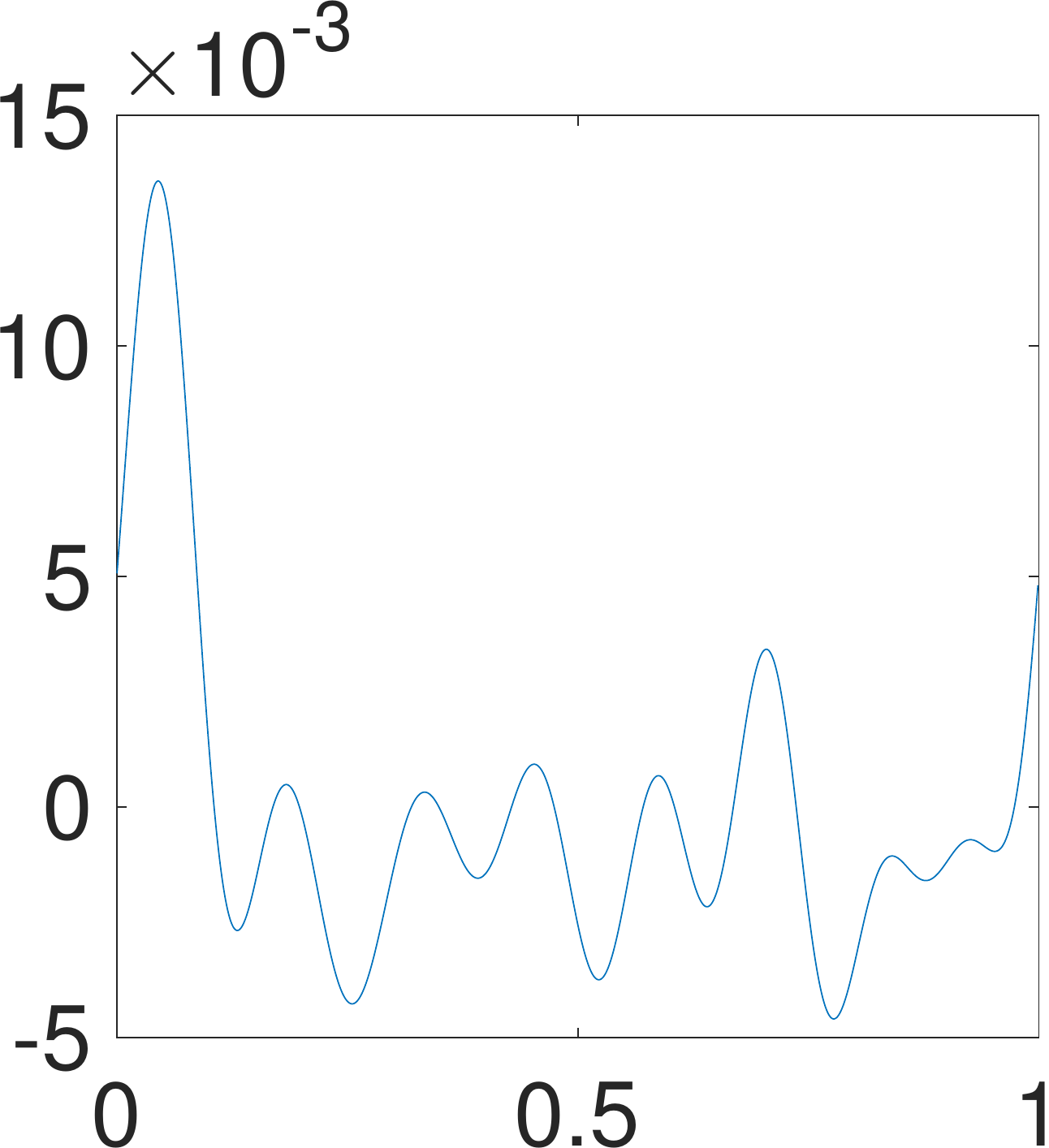}   \\
      \includegraphics[width=1in]{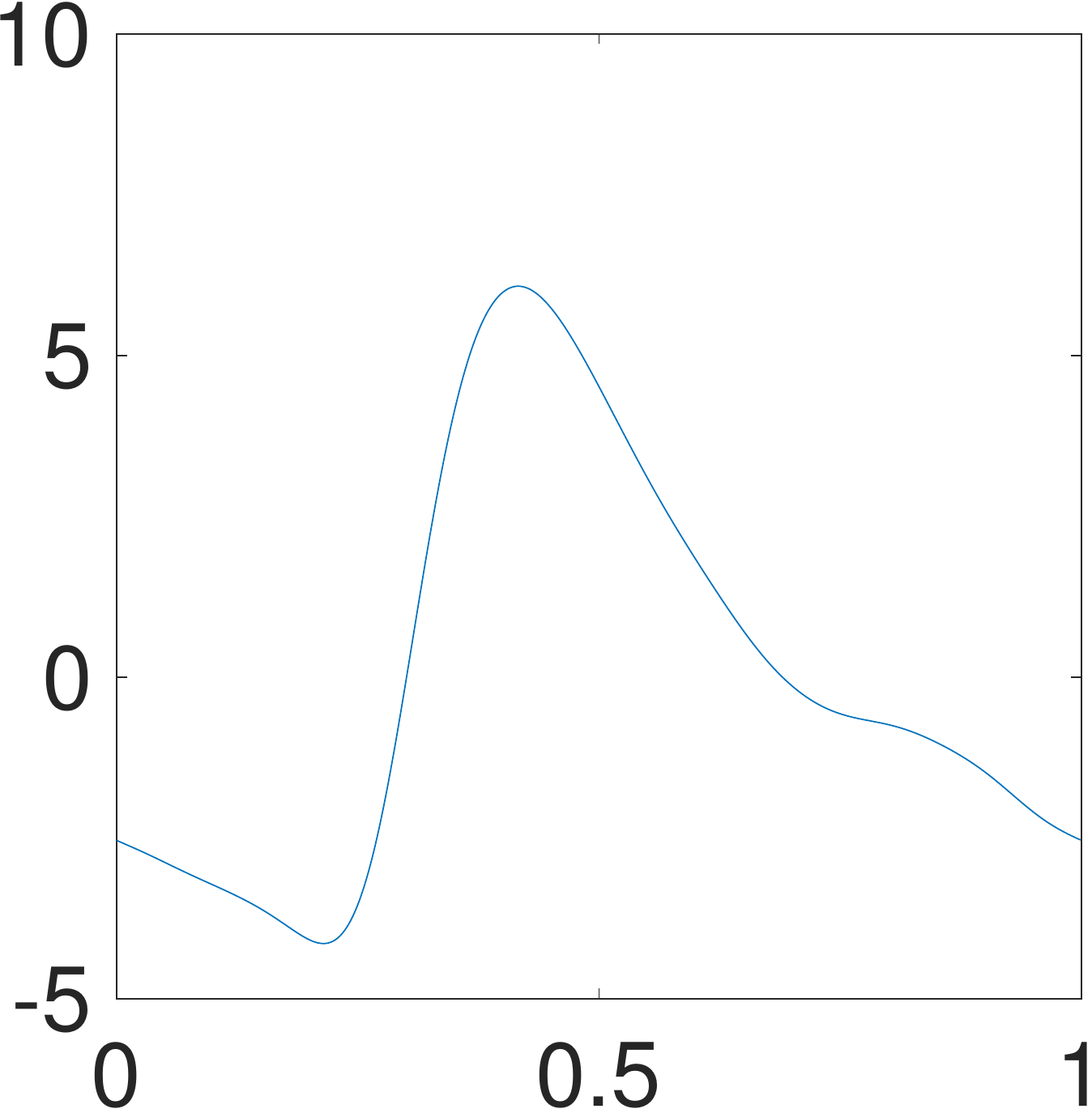}   &
      \includegraphics[width=1.05in]{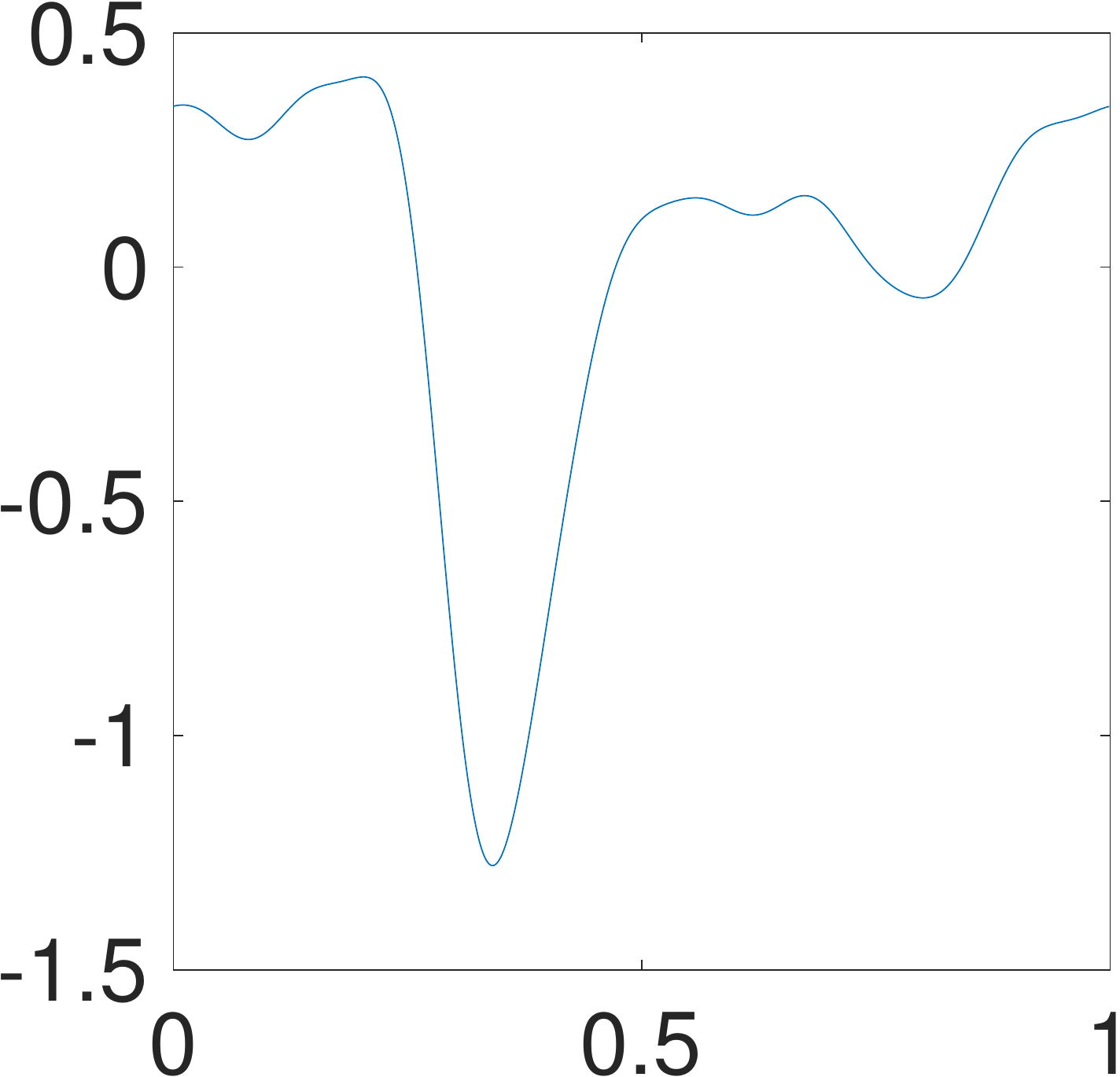}   &
      \includegraphics[width=1.15in]{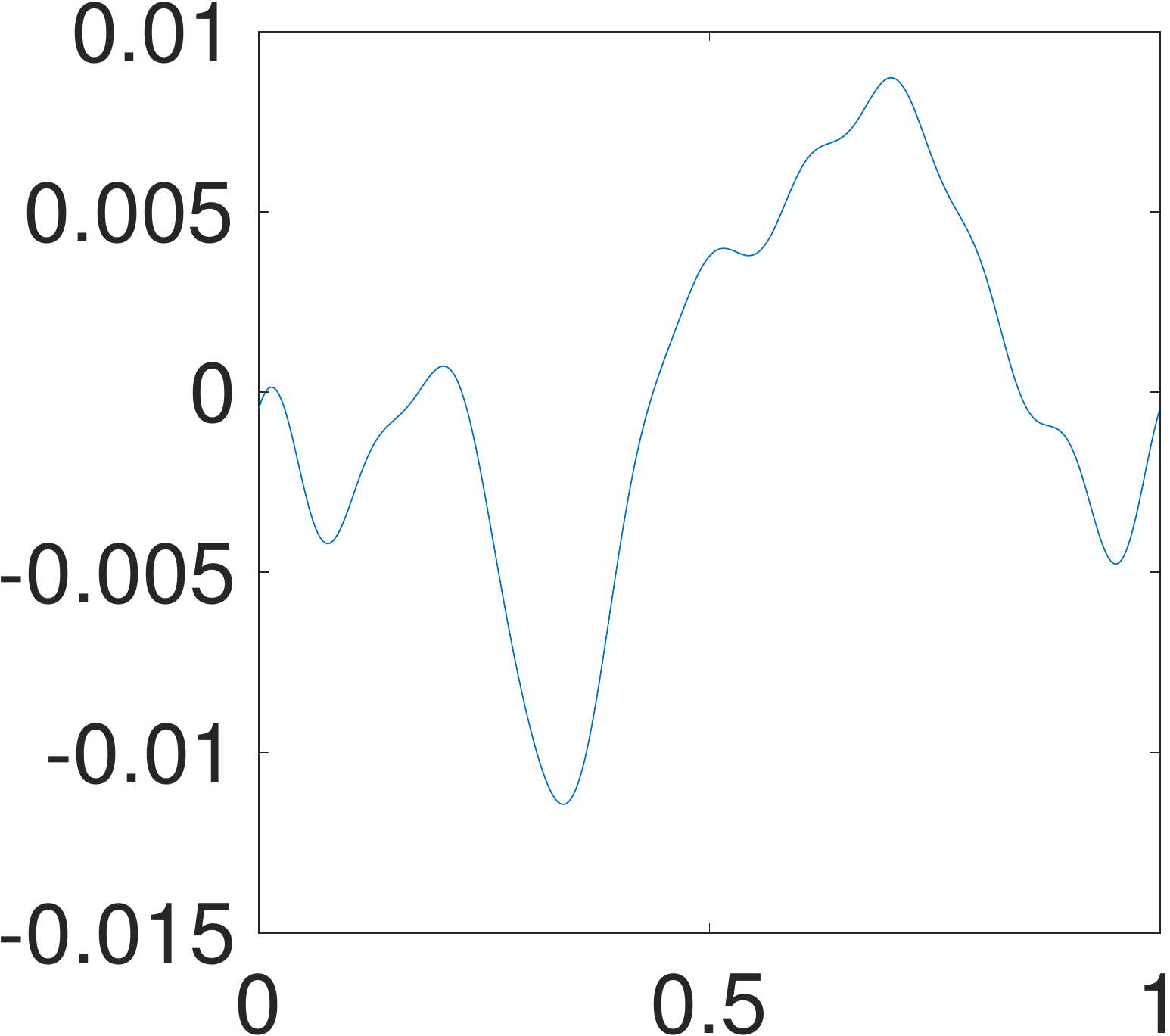}   &
      \includegraphics[width=1.05in]{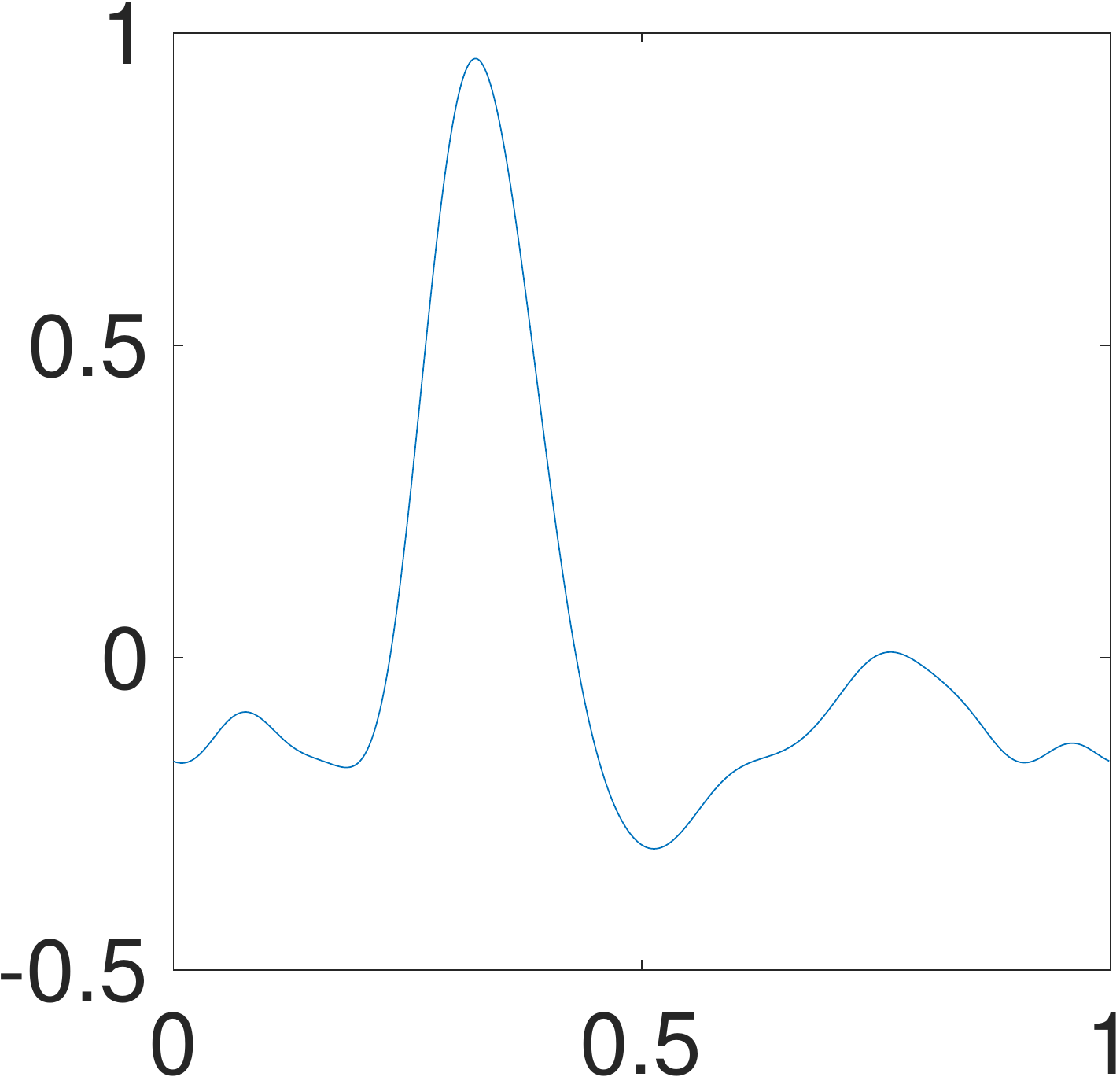}   &
      \includegraphics[width=1in]{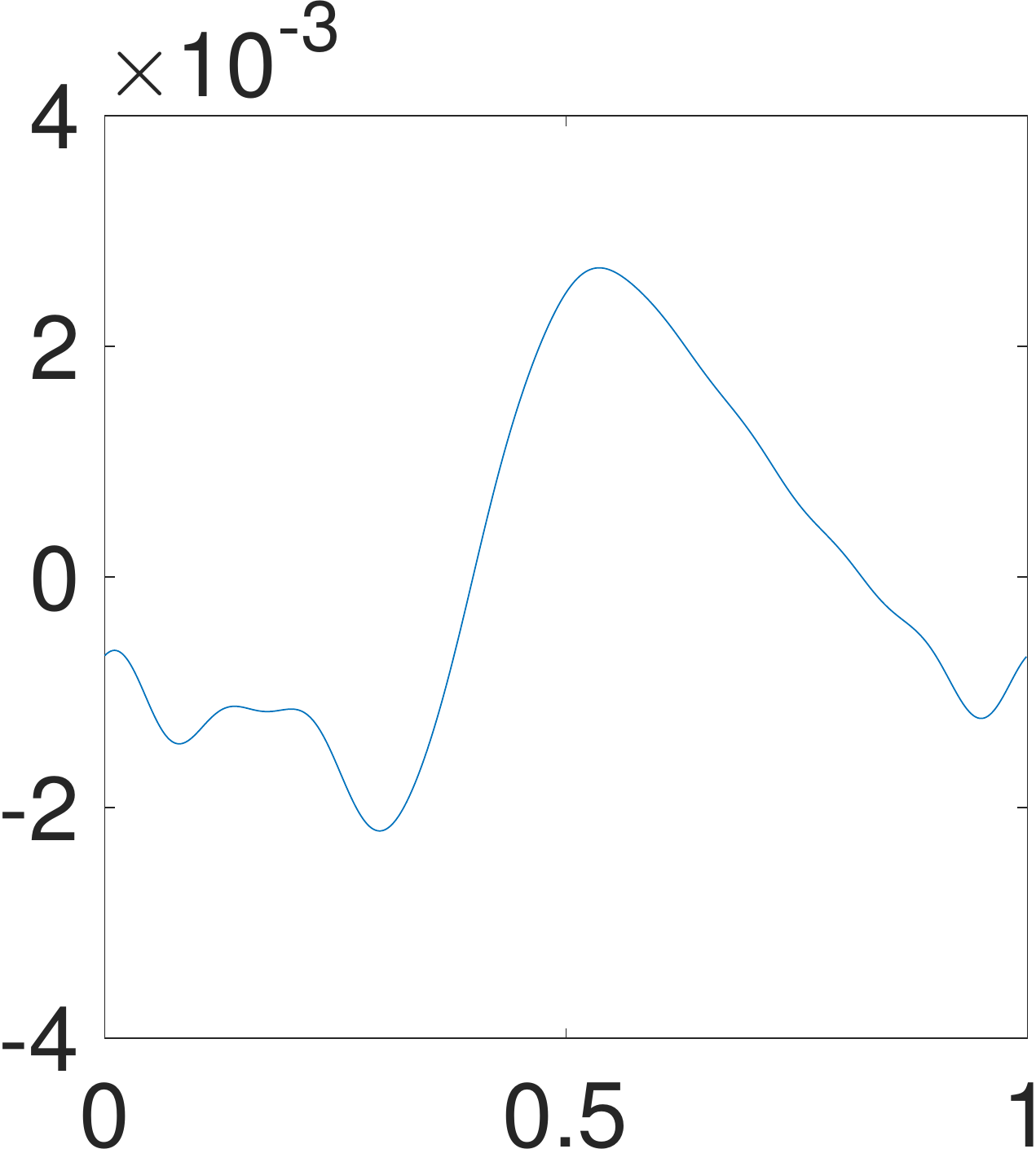}   
    \end{tabular}
  \end{center}
  \caption{Top: estimated shape functions $a_{0,1}s_{c0,1}(2\pi t)$, $a_{1,1}s_{c1,1}(2\pi t)$, $a_{-1,1}s_{c-1,1}(2\pi t)$, $b_{1,1}s_{s1,1}(2\pi t)$, and $b_{-1,1}s_{s-1,1}(2\pi t)$ for the respiratory MIMF. Bottom: estimated shape functions $a_{0,2}s_{c0,2}(2\pi t)$, $a_{1,2}s_{c1,2}(2\pi t)$, $a_{-1,2}s_{c-1,2}(2\pi t)$, $b_{1,2}s_{s1,2}(2\pi t)$, and $b_{-1,2}s_{s-1,2}(2\pi t)$ for the cardiac MIMF.}
\label{fig:15_3}
\end{figure}

\section{Conclusion}
\label{sec:con}

This paper proposed the recursive diffeomorphism-based spectral analysis (RDSA) for the multiresolution mode decomposition. The convergence of RDSA has been theoretically and numerically proved. The computational efficiency is significantly better than the recursive diffeomorphism-based regression in \cite{MMD}. RDSA analyzes oscillatory time series by providing its multiresolution expansion coefficients and shape function series; these features would be more meaningful than those by traditional Fourier analysis and wavelet analysis. As we have seen in numerical examples, these features visualize important variation of signals, which are important for abnormality detection in oscillatory time series. The computational efficiency of RDSA makes the multiresolution mode decomposition a practical model for large-scale time series analysis and online data analysis, e.g, real-time monitoring systems for heart condition.

The fast algorithms proposed in this paper can be naturally extended to higher dimensional spaces for the applications like atomic crystal images in physics \cite{Crystal,LuWirthYang:2016}, art investigation \cite{Canvas,Canvas2}, geology \cite{GeoReview,SSCT,977903}, imaging \cite{4685952}, etc. In higher dimensional spaces, the computational efficiency is a crucial issue. Hence, the extension of RDSA to higher dimensional spaces would be very important.

{\bf Acknowledgments.} 
H.Y. thanks Ingrid Daubechies for her inspiration and discussion. This research is supported by the start-up grant from the Department of Mathematics at the National University of Singapore.

\bibliographystyle{unsrt} 
\bibliography{ref}

\begin{thebibliography}{10}

\bibitem{HauBio2}
Hau-Tieng Wu, Yi-Hsin Chan, Yu-Ting Lin, and Yung-Hsin Yeh.
\newblock Using synchrosqueezing transform to discover breathing dynamics from
  {ECG} signals.
\newblock {\em Applied and Computational Harmonic Analysis}, 36(2):354 -- 359,
  2014.

\bibitem{Pinheiro2012175}
Eduardo Pinheiro, Octavian Postolache, and Pedro Girão.
\newblock Empirical mode decomposition and principal component analysis
  implementation in processing non-invasive cardiovascular signals.
\newblock {\em Measurement}, 45(2):175 -- 181, 2012.
\newblock Special issue on Electrical Instruments.

\bibitem{7042968}
Erik Alonso, Elisabete Aramendi, Digna Gonz{\'a}lez-Otero, Unai Ayala, Mohamud
  Daya, and James~K. Russell.
\newblock Empirical mode decomposition for chest compression and ventilation
  detection in cardiac arrest.
\newblock In {\em Computing in Cardiology 2014}, pages 17--20, Sept 2014.

\bibitem{Crystal}
Haizhao Yang, Jianfeng Lu, and Lexing Ying.
\newblock Crystal image analysis using 2{D} synchrosqueezed transforms.
\newblock {\em Multiscale Modeling \& Simulation}, 13(4):1542--1572, 2015.

\bibitem{LuWirthYang:2016}
Jianfeng Lu, Benedikt Wirth, and Haizhao Yang.
\newblock Combining 2{D} synchrosqueezed wave packet transform with
  optimization for crystal image analysis.
\newblock {\em Journal of the Mechanics and Physics of Solids}, pages~--, 2016.

\bibitem{Eng2}
Wei Huang, Zheng Shen, Norden~E. Huang, and Yuan~Cheng Fung.
\newblock Engineering analysis of biological variables: An example of blood
  pressure over 1 day.
\newblock {\em Proc. Natl. Acad. Sci.}, 95, 1998.

\bibitem{ME}
Chao Zhang, Zhixiong Li, Chao Hu, Shuai Chen, Jianguo Wang, and Xiaogang Zhang.
\newblock An optimized ensemble local mean decomposition method for fault
  detection of mechanical components.
\newblock {\em Measurement Science and Technology}, 28(3):035102, 2017.

\bibitem{Canvas}
Haizhao Yang, Jianfeng Lu, W.P. Brown, I.~Daubechies, and Lexing Ying.
\newblock Quantitative canvas weave analysis using 2-{D} synchrosqueezed
  transforms: Application of time-frequency analysis to art investigation.
\newblock {\em Signal Processing Magazine, IEEE}, 32(4):55--63, July 2015.

\bibitem{Canvas2}
Bruno Cornelis, Haizhao Yang, Alex Goodfriend, Noelle Ocon, Jianfeng Lu, and
  Ingrid Daubechies.
\newblock Removal of canvas patterns in digital acquisitions of paintings.
\newblock {\em IEEE Transactions on Image Processing}, 26(1):160--171, Jan
  2017.

\bibitem{GeoReview}
Jean~B. Tary, Roberto~H. Herrera, Jiajun Han, and Mirko van~der Baan.
\newblock {Spectral estimation-What is new? What is next?}
\newblock {\em Rev. Geophys.}, 52(4):723--749, December 2014.

\bibitem{SSCT}
Haizhao Yang and Lexing Ying.
\newblock Synchrosqueezed curvelet transform for two-dimensional mode
  decomposition.
\newblock {\em SIAM Journal on Mathematical Analysis}, 46(3):2052--2083, 2014.

\bibitem{977903}
Yue Huanyin, Guo Huadong, Han Chunming, Li~Xinwu, and Wang Changlin.
\newblock A sar interferogram filter based on the empirical mode decomposition
  method.
\newblock In {\em IGARSS 2001. Scanning the Present and Resolving the Future.
  Proceedings. IEEE 2001 International Geoscience and Remote Sensing Symposium
  (Cat. No.01CH37217)}, volume~5, pages 2061--2063 vol.5, 2001.

\bibitem{4685952}
Xueru Bai, Mengdao Xing, Feng Zhou, Guangyue Lu, and Zheng Bao.
\newblock Imaging of micromotion targets with rotating parts based on
  empirical-mode decomposition.
\newblock {\em IEEE Transactions on Geoscience and Remote Sensing},
  46(11):3514--3523, Nov 2008.

\bibitem{Huang1998}
Norden~E. Huang, Zheng Shen, Steven~R. Long, Manli~C. Wu, Hsing~H. Shih, Quanan
  Zheng, Nai-Chyuan Yen, Chi~Chao Tung, and Henry~H. Liu.
\newblock The empirical mode decomposition and the {H}ilbert spectrum for
  nonlinear and non-stationary time series analysis.
\newblock {\em R. Soc. Lond. Proc. Ser. A Math. Phys. Eng. Sci.},
  454(1971):903--995, 1998.

\bibitem{doi:10.1142/S1793536909000047}
Zhaohua Wu and Norden~E. Huang.
\newblock Ensemble empirical mode decomposition: A noise-assisted data analysis
  method.
\newblock {\em Advances in Adaptive Data Analysis}, 01(01):1--41, 2009.

\bibitem{Daubechies2011}
Ingrid Daubechies, Jianfeng Lu, and Hau-Tieng Wu.
\newblock Synchrosqueezed wavelet transforms: an empirical mode
  decomposition-like tool.
\newblock {\em Appl. Comput. Harmon. Anal.}, 30(2):243--261, 2011.

\bibitem{BEHERA2016}
Ratikanta Behera, Sylvain Meignen, and Thomas Oberlin.
\newblock Theoretical analysis of the second-order synchrosqueezing transform.
\newblock {\em Applied and Computational Harmonic Analysis}, 2016.

\bibitem{Auger1995}
Franq\c{c}ois Auger and Patrick Flandrin.
\newblock Improving the readability of time-frequency and time-scale
  representations by the reassignment method.
\newblock {\em Signal Processing, IEEE Transactions on}, 43(5):1068 --1089,
  1995.

\bibitem{Chassande-Mottin2003}
Eric Chassande-Mottin, Francois Auger, and Patrick Flandrin.
\newblock Time-frequency/time-scale reassignment.
\newblock In {\em Wavelets and signal processing}, Appl. Numer. Harmon. Anal.,
  pages 233--267. Birkh\"auser Boston, Boston, MA, 2003.

\bibitem{VMD}
Konstantin Dragomiretskiy and Dominique Zosso.
\newblock Variational mode decomposition.
\newblock {\em Signal Processing, IEEE Transactions on}, 62(3):531--544, Feb
  2014.

\bibitem{Hou2012}
Thomas~Y. Hou and Zuoqiang Shi.
\newblock Data-driven time–frequency analysis.
\newblock {\em Applied and Computational Harmonic Analysis}, 35(2):284 -- 308,
  2013.

\bibitem{YangWang}
Luan Lin, Yang Wang, and Haomin Zhou.
\newblock Iterative filtering as an alternative algorithm for empirical mode
  decomposition.
\newblock {\em Advances in Adaptive Data Analysis}, 1(4):543--560, 10 2009.

\bibitem{Cicone2016384}
Antonio Cicone, Jingfang Liu, and Haomin Zhou.
\newblock Adaptive local iterative filtering for signal decomposition and
  instantaneous frequency analysis.
\newblock {\em Applied and Computational Harmonic Analysis}, 41(2):384 -- 411,
  2016.
\newblock Sparse Representations with Applications in Imaging Science, Data
  Analysis, and Beyond, Part IISI: \{ICCHAS\} Outgrowth, part 2.

\bibitem{PhysicalAnal}
Zhaohua Wu, Norden~E. Huang, and Xianyao Chen.
\newblock Some considerations on physical analysis of data.
\newblock {\em Advances in Adaptive Data Analysis}, 3(1-2):95--113, 2011.

\bibitem{Hau-Tieng2013}
Hau-Tieng Wu.
\newblock Instantaneous frequency and wave shape functions (i).
\newblock {\em Applied and Computational Harmonic Analysis}, 35(2):181 -- 199,
  2013.

\bibitem{1DSSWPT}
Haizhao Yang.
\newblock Synchrosqueezed wave packet transforms and diffeomorphism based
  spectral analysis for 1{D} general mode decompositions.
\newblock {\em Applied and Computational Harmonic Analysis}, 39(1):33 -- 66,
  2015.

\bibitem{Hou2016}
Thomas~Y. Hou and Zuoqiang Shi.
\newblock Extracting a shape function for a signal with intra-wave frequency
  modulation.
\newblock {\em Philosophical Transactions of the Royal Society of London A:
  Mathematical, Physical and Engineering Sciences}, 374(2065), 2016.

\bibitem{HZYregression}
Jieren Xu, Haizhao Yang, and Ingrid Daubechies.
\newblock {Recursive Diffeomorphism-Based Regression for Shape Functions}.
\newblock {\em SIAM Journal on Mathematical Analysis}, 2017.

\bibitem{doi:10.1097/ALN.0b013e31816c89e1}
Andrew Reisner, M.D., Phillip~A. Shaltis, Ph.D., Devin McCombie, and H~Harry
  Asada, Ph.D.
\newblock Utility of the photoplethysmogram in circulatory monitoring.
\newblock {\em Anesthesiology}, 108(5):950--958, 2008.

\bibitem{LI201589}
Dazhou Li, Hai Zhao, and Shengchang Dou.
\newblock A new signal decomposition to estimate breathing rate and heart rate
  from photoplethysmography signal.
\newblock {\em Biomedical Signal Processing and Control}, 19:89 -- 95, 2015.

\bibitem{1615746}
G.~S.~H. Chan, P.~M. Middleton, N.~H. Lovell, and B.~G. Celler.
\newblock Extraction of photoplethysmographic waveform variability by lowpass
  filtering.
\newblock In {\em 2005 IEEE Engineering in Medicine and Biology 27th Annual
  Conference}, pages 5568--5571, Jan 2005.

\bibitem{7529221}
Y.~Ye, Y.~Cheng, W.~He, M.~Hou, and Z.~Zhang.
\newblock Combining nonlinear adaptive filtering and signal decomposition for
  motion artifact removal in wearable photoplethysmography.
\newblock {\em IEEE Sensors Journal}, 16(19):7133--7141, Oct 2016.

\bibitem{cite-key}
Antonio Cicone and Hau-Tieng Wu.
\newblock How nonlinear-type time-frequency analysis can help in sensing
  instantaneous heart rate and instantaneous respiratory rate from
  photoplethysmography in a reliable way.
\newblock {\em Frontiers in Physiology}, 8:701, 2017.

\bibitem{MMD}
Haizhao Yang.
\newblock Multiresolution mode decomposition for adaptive time series analysis.
\newblock {\em arXiv:1709.06880}, 2017.

\bibitem{ceptrum}
Chen-Yun Lin, Su~Li, and Hau-Tieng Wu.
\newblock Wave-shape function analysis -- when ceptrum meets time-frequency
  analysis.
\newblock {\em arXiv:1605.01805[physics.data-an]}, 2016.

\bibitem{NOP}
Jieren Xu, Yitong Li, David Dunson, Ingrid Daubechies, and Haizhao Yang.
\newblock Non-oscillatory pattern learning for non-stationary signals.
\newblock {\em arXiv:1805.08102 [stat.ML]}, 2018.

\bibitem{7990179}
S.~Chen, X.~Dong, Z.~Peng, W.~Zhang, and G.~Meng.
\newblock Nonlinear chirp mode decomposition: A variational method.
\newblock {\em IEEE Transactions on Signal Processing}, 65(22):6024--6037, Nov
  2017.

\bibitem{regressionBook}
L{\'a}szl{\'o} Gy{\"o}rfi, Micael Kohler, Adam Krzy{\.z}ak, and Harro Walk.
\newblock {\em {A} distribution-free theory of nonparametric regression}.
\newblock Springer series in statistics. Springer, New York, Berlin, Paris,
  2002.
\newblock Autre(s) tirage(s) : 2010.

\bibitem{MDFT07}
Julius~O. Smith.
\newblock {\em Mathematics of the Discrete Fourier Transform (DFT)}.
\newblock W3K Publishing,
  \htmladdnormallink{http://www.w3k.org/books/}{http://www.w3k.org/books/},
  2007.

\bibitem{Dutt}
A.~Dutt and V.~Rokhlin.
\newblock Fast fourier transforms for nonequispaced data.
\newblock {\em SIAM Journal on Scientific Computing}, 14(6):1368--1393, 1993.

\bibitem{Robustness}
Haizhao Yang.
\newblock Statistical analysis of synchrosqueezed transforms.
\newblock {\em Applied and Computational Harmonic Analysis}, 2017.

\bibitem{8081384}
D.~Fourer, J.~Harmouche, J.~Schmitt, T.~Oberlin, S.~Meignen, F.~Auger, and
  P.~Flandrin.
\newblock The astres toolbox for mode extraction of non-stationary
  multicomponent signals.
\newblock In {\em 2017 25th European Signal Processing Conference (EUSIPCO)},
  pages 1130--1134, Aug 2017.

\end{thebibliography}

\end{document}